\newtheorem{theorem}{Theorem}[section]
\newtheorem{lemma}[theorem]{Lemma}
\newtheorem{cor}[theorem]{Corollary}
\newtheorem{proposition}[theorem]{Proposition}
\theoremstyle{definition}
\newtheorem{definition}[theorem]{Definition}
\newtheorem{question}{Question}
\newtheorem{problem}{Problem}
\newtheorem{example}[theorem]{Example}
\theoremstyle{remark}
\newtheorem{remark}[theorem]{Remark}
\newcommand{\F}{\mathbb{F}_q}
\newcommand{\FQ}{\mathbb{F}_Q}
\newcommand{\ds}{\displaystyle}
\newcommand{\la}{\lambda}
\newcommand{\m}{\mathrm{mod}\;}
\numberwithin{equation}{section}
\DeclareMathOperator{\N}{\mathbb{N}}
\DeclareMathOperator{\tr}{tr}
\DeclareMathOperator{\spec}{spec}
\begin{document}

\author[A.N. Abyzov]{Adel N. Abyzov}
\address{Department of Algebra and Mathematical Logic, Kasan Federal University, Volga Region, Russian Federation}
\email{adel.abyzov@kpfu.ru}

\author[S.D. Cohen]{Stephen D. Cohen }
\address{6 Bracken Road, Portlethen, Aberdeen AB12 4TA, Scotland, UK}
\email{stephen.cohen@glasgow.ac.uk}

\author[P.V. Danchev]{Peter V. Danchev}
\address{Institute of Mathematics and Informatics, Bulgarian Academy of Sciences, \break 1113 Sofia, Bulgaria}
\email{danchev@math.bas.bg; pvdanchev@yahoo.com}

\author[D.T. Tapkin]{Daniel T. Tapkin}
\address{Department of Algebra and Mathematical Logic, Kasan Federal University, Volga Region, Russian Federation}
\email{danil.tapkin@yandex.ru}

\title[Rings and finite fields whose elements are sums or differences ...] {Rings and finite fields whose elements are \\
sums or differences of tripotents and potents}
\keywords{(weakly) $n$-torsion clean rings, idempotents, tripotents, potents, units, finite fields, Gauss and Jacobi sums}
\subjclass[2010]{11T30; 16D60; 16S34; 16U60}

\begin{abstract}
We significantly strengthen results on the structure of matrix rings over finite fields and apply them to describe the structure of the so-called weakly $n$-torsion clean rings. Specifically, we establish that, for any field $F$  with either exactly  seven or strictly more than nine elements,  each matrix over $F$ is presentable as a sum of of a tripotent matrix and a $q$-potent matrix if and only if each element in $F$ is presentable as a sum of a tripotent and a $q$-potent, whenever $q>1$ is an odd integer. In addition, if $Q$ is a power of an odd prime and $F$ is a field of  odd characteristic,  having cardinality strictly greater than $9$, then, for all $n\geq 1$,  the matrix ring $\mathbb{M}_n(F)$ is weakly $(Q-1)$-torsion clean if and only if $F$ is a finite field of cardinality $Q$.

 A novel contribution to the ring-theoretical theme of  this study is the classification of finite fields  $\FQ$ of odd order in which every element is the sum of a tripotent and a potent.  In this regard, we obtain an expression for the number of consecutive triples $\gamma-1,\gamma,\gamma+1$ of non-square elements  in $\FQ$;  in particular, $\FQ$ contains three consecutive non-square elements whenever   $\FQ$ contains more than 9 elements.
\end{abstract}

\maketitle

\section{Introduction}

In this  paper, all rings are assumed to be associative with unity but not  necessarily commutative unless explicitly specified.  Our terminology and notation are, for the  the most part, standard,  being in agreement with those from \cite{L}. Specifically, for such a ring $R$, $U(R)$ denotes the group of units, ${\rm Id}(R)$ is the set of idempotents and $J(R)$ the Jacobson radical of $R$, respectively. Further, the finite field with $Q$ elements will be denoted by $\mathbb{F}_Q$, and $\mathbb{M}_k(R)$ will stand for the $k\times k$ matrix ring over $R$, $k\in \mathbb{N}$
.

As usual, an element $d$ of a ring $R$ is termed {\it nilpotent}, provided $d^j=0$ for some integer $j\geq 2$. Moreover, we will say a nil ideal $I$ of $R$ is {\it nil of index $k$} if, for any $r\in I$, we have $r^k=0$ and $k$ is the minimal natural number with this property. Likewise, we will say that $I$ is nil of bounded index if it is nil of index $k$, for some fixed $k$. Reciprocally, an element $t$ of $R$ is said to be {\it potent} if there is a natural number $i>1$ with the property $t^i=t$. When $i=2$ the element is called {\it idempotent}, whereas when $i=3$ the element is called {\it tripotent}.

The main  theorem we establish  characterizes the structure of those rings whose elements can be represented as a sum or a difference of tripotents and nilpotents. It represents  an  improvement of a recent result of Abyzov and Tapkin, namely \cite[Theorem 14]{smz-2021}.

\begin{theorem}\label{fm}
Let $q > 1$ be an odd integer, and $R$ an integral ring which is not isomorphic to $\mathbb{F}_{3}$, $\mathbb{F}_{5}$ or $\mathbb{F}_{9}$. Then the following seven statements are equivalent.
\begin{enumerate}
    \item [(1)] For each (for some) $n \in \mathbb{N}$, every matrix in $\mathbb{M}_{n}(R)$ can be presented as a sum of an idempotent matrix and a $q$-potent matrix.
    \item [(2)] For each (for some) $n \in \mathbb{N}$, every matrix in $\mathbb{M}_{n}(R)$ can be presented as a sum of a nilpotent matrix and a $q$-potent matrix.
    \item [(3)] For each (for some) $n \in \mathbb{N}$, every matrix in $\mathbb{M}_{n}(R)$ can be presented as a sum of a tripotent matrix and a $q$-potent matrix.
    \item [(4)] For each (for some) $n \in \mathbb{N}$, every matrix in $\mathbb{M}_{n}(R)$ can be presented as a sum or a difference of a $q$-potent matrix and an idempotent matrix.
    \item [(5)] Every element in $R$ is the sum of a $q$-potent and a tripotent.
    \item [(6)] Every element in $R$ is the sum or a difference of a $q$-potent and an idempotent.
    \item[(7)] $R$ is a finite field such that $(|R|-1) \mid (q-1)$.
\end{enumerate}
\end{theorem}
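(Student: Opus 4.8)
I would route all the equivalences through statement~(7), the only one that is intrinsically verifiable: the aim is to establish $(7)\Rightarrow(1)$--$(4)$, $(7)\Leftrightarrow(5)\Leftrightarrow(6)$, and to close the cycle with the ``some~$n$'' form of each of $(1)$--$(4)$ implying $(7)$. Many of the links are soft. In an integral ring the only idempotents are $0,1$ and the only tripotents are $0,1,-1$, because $e^{2}=e$ and $t^{3}=t$ factor as $e(e-1)=0$ and $t(t-1)(t+1)=0$; hence $(5)$ and $(6)$ both assert exactly that every $a\in R$ has at least one of $a-1,a,a+1$ a $q$-potent, so $(5)\Leftrightarrow(6)$, and taking $n=1$ turns $(3)$ into $(5)$ and $(1),(4)$ into $(6)$ verbatim. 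The remaining internal equivalences among $(1)$--$(4)$, and the reconciliation of the ``each~$n$'' and ``some~$n$'' versions, I would extract from the structure theory of $\mathbb{M}_{n}(F)$ over a field (the part of the paper that strengthens \cite[Theorem~14]{smz-2021}): after reducing a matrix to primary rational canonical form, each block is corrected by a suitable nilpotent, idempotent, tripotent, or $\pm$idempotent, using that over a field a $q$-potent matrix is precisely one whose minimal polynomial divides $x^{q}-x$.

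For $(7)\Rightarrow$ everything else: if $R=\FQ$ with $(|R|-1)\mid(q-1)$ then $a^{q}=a$ for all $a\in\FQ$ (for $a\neq 0$, $a^{q-1}=(a^{Q-1})^{(q-1)/(Q-1)}=1$), so every scalar is already a $q$-potent and $(5),(6)$ are immediate with tripotent/idempotent part $0$. Moreover $(Q-1)\mid(q-1)$ gives $x^{Q}-x\mid x^{q}-x$, which, together with the matrix structure results, lets one write every matrix over $\FQ$ as a $q$-potent plus the relevant idempotent/tripotent/nilpotent correction, yielding $(1)$--$(4)$ for every $n$.

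The real content is $(5)\Rightarrow(7)$. First I would show $R$ is finite: in an integral ring the $q$-potents are exactly the roots of $x(x^{q-1}-1)$, so there are at most $q$ of them, and by $(5)$ every element of $R$ lies in the union of that set with its translates by $\pm1$; hence $|R|\le 3q$, and a finite integral ring is a field (Wedderburn), say $R=\FQ$. Now set $d=\gcd(Q-1,q-1)$. The set $S$ of $q$-potents of $\FQ$ is then $H\cup\{0\}$ with $H$ the unique subgroup of $\FQ^{\times}$ of order $d$, and $(5)$ is equivalent to the covering condition $S\cup(S+1)\cup(S-1)=\FQ$, while $(7)$ says $d=Q-1$. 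A crude count forces $3(d+1)\ge Q$, so $d$ is one of the largest divisors of $Q-1$; for $Q>9$ the only candidates are $d=Q-1$, $d=(Q-1)/2$ and $d=(Q-1)/3$ (only the first and last in characteristic~$2$). When $d=(Q-1)/2$ the set $H$ is the nonzero squares and the covering condition says precisely that $\FQ$ has \emph{no} three consecutive non-squares $\gamma-1,\gamma,\gamma+1$; when $d=(Q-1)/3$ one obtains the analogous statement for non-cubes.

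The main obstacle is to rule these out whenever $Q>9$, and this is where the Gauss and Jacobi sums enter: I would express the number of forbidden triples as a sum over the multiplicative characters trivial on $H$, isolate a main term of order $Q/m^{3}$ with $m=(Q-1)/d\in\{2,3\}$, bound the remaining complete character sums by Weil's estimate at $O(\sqrt{Q})$, and dispatch the finitely many small $Q$ by direct inspection. This produces a strictly positive count of forbidden triples for every $Q>9$, and the very same analysis shows that $\mathbb{F}_{3},\mathbb{F}_{5},\mathbb{F}_{9}$ are exactly the fields in which a proper subgroup $H$ can still satisfy the covering condition --- which is why they are the stated exceptions. With all $d<Q-1$ excluded we get $H=\FQ^{\times}$, i.e.\ $(|R|-1)\mid(q-1)$, which closes the cycle. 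I expect the exact evaluation (or sharp estimation) of this triple character sum, uniformly over all $Q>9$ and over both boundary divisors and the even-characteristic case, to be the one genuinely delicate ingredient; the canonical-form bookkeeping and the elementary domain arguments, though lengthy, follow well-trodden lines.
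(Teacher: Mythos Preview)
Your outline is essentially the paper's own strategy: reduce to a finite field via the domain argument, then show that the covering $S\cup(S+1)\cup(S-1)=\FQ$ forces $d=\gcd(Q-1,q-1)=Q-1$ by producing three consecutive non-elements of the subgroup $H$ via character sums. Two points deserve correction or sharpening.

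First, you do not need a non-cube character-sum argument for $d=(Q-1)/3$. The paper disposes of every $d\le (Q-1)/3$ by counting alone: with $n=d+1$ one has $|C_n\cup(C_n-1)\cup(C_n+1)|\le 3n-2$, and if this equals $Q$ then $n=(Q+2)/3$ is odd (as $Q$ is odd), whence $-1\in C_n\cap(C_n-1)$ gives a third overlap and drops the bound to $3n-3<Q$. In characteristic~$2$ the analogous count $|C_n\cup(C_n+1)|\le 2n-2<Q$ already suffices. So the only genuinely analytic case is $d=(Q-1)/2$, and there the paper in fact computes $N_Q$ \emph{exactly} (via the Jacobsthal sum and the Katre--Rajwade evaluation) rather than invoking a Weil bound, which is how the small cases fall out cleanly.

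Second, your claim that ``$\mathbb{F}_3,\mathbb{F}_5,\mathbb{F}_9$ are exactly the fields in which a proper subgroup $H$ can still satisfy the covering condition'' is false as stated: in $\mathbb{F}_7$ the squares $H=\{1,2,4\}$ also satisfy $(H\cup\{0\})\cup(H\cup\{0\}\pm 1)=\mathbb{F}_7$, since $N_7=0$. The reason $\mathbb{F}_7$ is \emph{not} an exception is different: because $q$ is odd, $q-1$ is even, so $d=\gcd(6,q-1)$ is even and $d=3$ can never occur; hence $d\in\{2,6\}$, and $d=2$ visibly fails the covering. The paper singles out $\mathbb{F}_7$ for exactly this one-line parity argument. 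Your ``direct inspection'' of small $Q$ would eventually catch this, but the explanation you give for why only $\mathbb{F}_3,\mathbb{F}_5,\mathbb{F}_9$ are excluded needs to be replaced by this parity observation.
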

Specifically, the significant additions to  \cite[Theorem 14]{smz-2021},  in Theorem \ref{fm} is the equivalence of statement (1) with statements (3)  and  (5) (subject only to the exclusion of rings isomorphic to $\mathbb{F}_5$ and $\mathbb{F}_9$, as well as $\mathbb{F}_3$).  The key  to  this improvenment   is the the following theotrem on finite fields which seems to be new as a complete result.

\begin{theorem} \label{SDC1} Let $Q=p^r$, where $p$ is an odd prime, and $N_Q$ be the number of triples $\gamma-1,\gamma,\gamma+1$ of consecutive non-square elements of $\FQ$.
In the first If $Q \le 9$, then $N_Q=0$; if $Q> 9$, then
\[ N_Q \ge \frac{1}{8}\Big(Q -2\sqrt{Q}-3\Big), \]
with equality if, and only if, $p \equiv 3\;(\m 4)$ and $r=2s$, where $s$ is odd.

Hence $N_Q$ is positive if, and only if,  $Q>9$.
\end{theorem}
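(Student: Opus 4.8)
The plan is to express $N_Q$ as a single character sum over $\FQ$, evaluate every constituent piece except one cubic sum by classical identities, and then control that cubic sum — and, crucially, decide when it is extremal — through the arithmetic of the elliptic curve $y^{2}=x^{3}-x$. Let $\chi$ denote the quadratic character of $\FQ$, extended by $\chi(0)=0$, so that $\frac{1}{2}\bigl(1-\chi(x)\bigr)$ is the indicator that a nonzero $x$ is a non-square. I would begin from the identity
\[
N_Q \;=\; \frac{1}{8}\sum_{\gamma\in\FQ}\bigl(1-\chi(\gamma-1)\bigr)\bigl(1-\chi(\gamma)\bigr)\bigl(1-\chi(\gamma+1)\bigr)\;-\;\varepsilon,
\]
in which $\varepsilon=\frac{1}{8}\bigl(1-\chi(-2)\bigr)\bigl(1-\chi(-1)\bigr)\in\{0,\tfrac{1}{2}\}$ corrects for the single boundary value $\gamma=-1$ at which the summand does not already vanish (for $\gamma=0,1$ it vanishes because $\chi(1)=1$, and for all other $\gamma$ not counted by $N_Q$ some factor is $1-\chi(\text{square})=0$). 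One observes at once that $\varepsilon=\tfrac{1}{2}$ forces $\chi(-1)=-1$, i.e.\ $Q\equiv 3\;(\m 4)$, hence $p\equiv 3\;(\m 4)$ with $r$ odd.

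Expanding the product and summing, the linear sums $\sum_\gamma\chi(\gamma)$ and $\sum_\gamma\chi(\gamma\pm 1)$ all vanish, while each of the three sums $\sum_\gamma\chi\bigl((\gamma-1)\gamma\bigr)$, $\sum_\gamma\chi\bigl(\gamma(\gamma+1)\bigr)$, $\sum_\gamma\chi\bigl((\gamma-1)(\gamma+1)\bigr)$ equals $-1$ by the classical evaluation of $\sum_x\chi(ax^{2}+bx+c)$ when $b^{2}-4ac\ne 0$. This leaves
\[
8N_Q \;=\; Q-3-S-8\varepsilon, \qquad S:=\sum_{\gamma\in\FQ}\chi(\gamma^{3}-\gamma).
\]
Here $S=-a_Q$, where $a_Q=Q+1-\#E(\FQ)$ is the Frobenius trace of $E\colon y^{2}=x^{3}-x$ over $\FQ$ (for each $x$ there are $1+\chi(x^{3}-x)$ values of $y$, so $\#E(\FQ)=Q+1+S$). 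If $Q\equiv 3\;(\m 4)$ the substitution $\gamma\mapsto-\gamma$ gives $S=-S$, so $S=0$ and $8N_Q=Q-3-8\varepsilon\ge Q-7\ge Q-2\sqrt Q-3$ (the last step for $Q\ge 4$); if $Q\equiv 1\;(\m 4)$ then $\varepsilon=0$ and the Hasse bound $|S|=|a_Q|\le 2\sqrt Q$ gives $8N_Q\ge Q-3-2\sqrt Q$. Either way $N_Q\ge\frac{1}{8}\bigl(Q-2\sqrt Q-3\bigr)$.

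For the equality clause: if $Q\equiv 3\;(\m 4)$ then $r$ is odd, so $\sqrt Q$ is irrational whereas $8N_Q$ is an integer, and equality is impossible; hence equality forces $Q\equiv 1\;(\m 4)$, so $\varepsilon=0$, $S=2\sqrt Q$, $a_Q=-2\sqrt Q$, and (as $a_Q\in\Z$) $\sqrt Q\in\Z$, i.e.\ $r=2m$. Since $E$ has $j$-invariant $1728$ it has complex multiplication by $\Z[i]$, and I would split on $p\bmod 4$. If $p\equiv 1\;(\m 4)$, write $p=\pi\bar\pi$ in $\Z[i]$ with $\pi=c+di$, $cd\ne 0$, $c\ne\pm d$; then $a_Q=u^{r}\pi^{r}+\bar u^{r}\bar\pi^{r}$ for some unit $u$, and $a_Q=-2p^{m}$ would force $\pi^{2m}$ to be real, which is impossible since $\pi^{2}/p=\pi/\bar\pi$ is not a root of unity (the roots of unity in $\Z[i]$ are $\pm1,\pm i$, all excluded by $cd\ne 0$ and $c\ne\pm d$). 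If $p\equiv 3\;(\m 4)$ then $E$ is supersingular over $\mathbb{F}_p$, so $a_p=0$ and the Frobenius eigenvalues over $\FQ=\mathbb{F}_{p^{2m}}$ are $\bigl(\pm i\sqrt p\bigr)^{2m}=(-1)^{m}p^{m}$, giving $a_Q=2(-1)^{m}p^{m}$, which equals $-2\sqrt Q$ exactly when $m$ is odd. Thus equality holds if and only if $p\equiv 3\;(\m 4)$ and $r=2s$ with $s$ odd.

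It remains to treat $Q\le 9$ and conclude. For $Q\in\{3,5,7,9\}$ a direct inspection of the (at most four) non-squares — for $\mathbb{F}_9$ noting that any additive triple $\gamma-1,\gamma,\gamma+1$ exhausts a coset of $\mathbb{F}_3$ — shows $N_Q=0$. For $Q>9$ we have $Q-2\sqrt Q-3=(\sqrt Q-3)(\sqrt Q+1)>0$, so the bound forces the integer $N_Q$ to be at least $1$; hence $N_Q>0$ precisely when $Q>9$. I expect the equality analysis to be the main obstacle: everything up to $N_Q\ge\frac{1}{8}(Q-2\sqrt Q-3)$ is routine character-sum bookkeeping plus the Hasse bound, but pinning down exactly when $a_Q=-2\sqrt Q$ requires the CM structure of $y^{2}=x^{3}-x$ — equivalently a Gauss/Jacobi-sum evaluation of $S$ over $\mathbb{F}_p$ together with the behaviour of Frobenius under extension of the base field.
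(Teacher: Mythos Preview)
Your proof is correct, and the overall shape---expand the triple product $\prod(1-\chi(\cdot))$, dispose of the linear and quadratic pieces by standard identities, and reduce everything to the cubic sum $S=\sum_\gamma\chi(\gamma^3-\gamma)$---is exactly what the paper does (the paper writes $S$ as the Jacobsthal sum $J(-1)$, and your boundary correction $8\varepsilon=(1-\chi(-2))(1-\chi(-1))$ matches the paper's combination $-\sum S_i+\sum T_i$ term for term).

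The genuine divergence is in how the cubic sum is handled. The paper quotes an explicit evaluation of $J(-1)$ due to Katre and Rajwade, which yields closed formulas for $N_Q$ in five cases according to $Q\bmod 8$ and $p\bmod 4$; the equality clause is then read off directly from the case $p\equiv 3\;(\m 4)$, $r$ even. You instead recognise $S=-a_Q$ for the CM curve $E\colon y^2=x^3-x$, obtain the inequality from the Hasse bound, and settle the equality clause via the Frobenius eigenvalues of $E$: supersingular with eigenvalues $\pm i\sqrt p$ when $p\equiv 3\;(\m 4)$, ordinary with $\pi/\bar\pi$ not a root of unity when $p\equiv 1\;(\m 4)$. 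Your route is cleaner and avoids the five-way case split, at the price of invoking the CM structure of $E$ (equivalently, the Gauss/Jacobi-sum machinery behind Katre--Rajwade); the paper's route is more elementary in its inputs and, as a by-product, produces an exact value of $N_Q$ in every case rather than only the lower bound stated in the theorem.
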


We remark that, in the proof of Theorem \ref{SDC1},  we give an exact expression for $N_Q$ in every case.

Our discussion of finite fields, including the proof of Theorem \ref{SDC1}, occupies Sections \ref{ff}---\ref{ffpotent}.  This leads on to the proof of Theorem \ref{fm} iteself in Sectiom \ref{main}.

\medskip

Moving on, we present applications of Theorem \ref{main} in Section \ref{appl}.   As a guide we  list the principal results here.

 In the first we use LCM to stand for least common multiple.

\begin{theorem}\label{q-tcd}
Let $Q \in \mathbb{N}$ be a prime power and let $F =\FQ$ be a field with at least $4$ elements. Set $d = \mathrm{LCM}(Q-1,2) + 1$. Then, for any $n \in \mathbb{N}$, every matrix from the ring $\mathbb{M}_{n}(F)$ can be written as a sum of an idempotent matrix and an invertible $d$-potent matrix.
\end{theorem}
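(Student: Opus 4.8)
The plan is to recast the statement as a torsion‑clean assertion, reduce it to companion blocks via canonical forms, and then carry out an explicit construction on each block.

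First I would record a few elementary remarks. Since $Q-1\ge 3$, the integer $d-1=\mathrm{LCM}(Q-1,2)$ is even (so $d$ is odd, in fact $d\ge 7$) and is a multiple of $|F|-1=Q-1$. A matrix $U\in\mathbb{M}_n(F)$ is an invertible $d$-potent precisely when $U^{d-1}=I$, i.e.\ precisely when $U$ has finite multiplicative order dividing $\mathrm{LCM}(Q-1,2)$; thus the theorem asserts exactly that $\mathbb{M}_n(F)$ is $\mathrm{LCM}(Q-1,2)$-torsion clean. Since being a sum of an idempotent and an invertible $d$-potent is invariant under conjugation and additive over block-diagonal direct sums, the primary rational canonical form lets one reduce to the case $A=C_{g^m}$, the companion matrix of a power of a monic irreducible $g\in F[x]$ of degree $k$, of size $N=km$; note such an $A$ is non-derogatory.

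Next I would dispose of the ``easy half'' using Theorem \ref{fm}. Applying it with $q=d$ --- legitimate since $(|F|-1)\mid(d-1)$, so statement (7) holds --- the implication (7)$\Rightarrow$(1) gives that every matrix of $\mathbb{M}_n(F)$ is a sum of an idempotent and a $d$-potent; the two fields $\mathbb{F}_5,\mathbb{F}_9$ excluded from Theorem \ref{fm} are covered by the same implication in \cite[Theorem 14]{smz-2021}, and the remaining exclusion $\mathbb{F}_3$ violates $|F|\ge 4$. So $C_{g^m}=E_0+W_0$ with $E_0^2=E_0$ and $W_0^{\,d}=W_0$, and the whole problem is to replace the $d$-potent summand by an invertible one. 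The naive attempt --- absorbing $\ker W_0$ into the idempotent by passing to $W_0\pm\bigl(I-W_0^{\,d-1}\bigr)$ --- does not work, because the induced correction to $E_0$ is idempotent only when $E_0$ is positioned very specially relative to the idempotent $W_0^{\,d-1}$, which the decomposition coming from Theorem \ref{fm} need not satisfy; hence the invertible decomposition must be built directly on the block $C_{g^m}$.

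The heart of the proof is this construction. For $Q$ odd one has $d=Q$, so an invertible $d$-potent is exactly a semisimple matrix with spectrum contained in $F^{\ast}$; for $Q$ even it is such a matrix times a commuting unipotent of square $I$. Since $C_{g^m}$ is non-derogatory, a rank-one perturbation $C_{g^m}-R$ can be given any prescribed characteristic polynomial, and a perturbation of sufficiently large rank $\rho$ (of order $N/(Q-1)$) can be arranged to turn $C_{g^m}$ into a semisimple matrix all of whose eigenvalues lie in $F^{\ast}$, with multiplicities balanced at about $N/(Q-1)$. Tuning the remaining free parameters of $R$ so that $R$ itself becomes an idempotent then reduces the existence of the decomposition to the solvability over $F^{\ast}$ of a small system of sum (and, for $\rho>1$, coefficient) conditions --- solvable whenever $|F|\ge 4$, but genuinely failing over $\mathbb{F}_3$. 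The two companion blocks $C_{x^m}$ and $C_{(x-1)^m}$ require a separate direct treatment, since there the eigenvalues one may introduce into the potent are confined to a two-element subset of $F^{\ast}$.

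The step I expect to be the main obstacle is precisely this block-by-block construction when $N$ is large relative to $|F|$: one must keep the prospective potent semisimple through the rank-$\rho$ perturbation (so that $U^{d-1}=I$ truly holds) while simultaneously meeting the idempotency constraints on $R$, and verify that the resulting existence statements over $F$ never fail as soon as $|F|\ge 4$. The bound is sharp --- the theorem is already false over $\mathbb{F}_3$ for the $2\times 2$ nilpotent Jordan block --- so no argument can circumvent a genuine use of $F$ having at least four elements.
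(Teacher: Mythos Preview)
Your reduction to companion (Frobenius) blocks is correct and is exactly what the paper does.  The detour through Theorem~\ref{fm} is harmless but, as you yourself note, buys nothing: the decomposition it produces has no control on $\ker W_0$, and you correctly abandon it.  The genuine gap is in what follows.  You propose to perturb $C_{g^m}$ by a rank-$\rho$ idempotent so that the result is semisimple with spectrum in $F^\ast$, and you reduce this to ``the solvability over $F^\ast$ of a small system of sum (and, for $\rho>1$, coefficient) conditions''.  But you never write down this system, let alone verify solvability; indeed you flag this very step as ``the main obstacle'', and for large $N$ relative to $|F|$ it is not at all clear that the idempotency constraints on $R$ and the semisimplicity constraints on $C_{g^m}-R$ can be satisfied simultaneously.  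As it stands the proposal is a plan, not a proof.

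The paper avoids this difficulty with a different and much cleaner idea: a \emph{scalar shift}.  For $Q\ge 5$ one chooses $k\in F$ so that $k+\{-1,0,1,2,-\xi,\xi-1\}\subseteq F^\ast$ (possible by a counting argument, with $\xi$ a carefully chosen primitive element), writes $C(p)-kI_n=VC(p_1)V^{-1}$, and then invokes the explicit decompositions $C(p_1)=e+f$ from \cite[Lemmas~1--3]{smz-2021}, which already guarantee $\spec(f)$ lies in one of the small sets $\{-1,0,1\}$, $\{-1,0,-\xi,\xi-1\}$, or $\{-1,0,\tr(p_1)-1\}$.  Then $C(p)=VeV^{-1}+V(f+kI_n)V^{-1}$, and the choice of $k$ forces $f+kI_n$ to have spectrum in $F^\ast$, hence to be an invertible $d$-potent.  (A secondary case $\tr(p_1)=1-k$ is handled by a further sign change.)  The field $\mathbb{F}_4$ is too small for the shift argument and is treated by two separate explicit lemmas, one for each parity of $n$.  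The missing ingredient in your proposal, then, is not a perturbation-theoretic existence argument but this shift-and-quote manoeuvre: the hard work of producing an idempotent-plus-potent decomposition with \emph{controlled spectrum} on the potent was already done in \cite{smz-2021}, and the scalar shift converts ``controlled'' into ``invertible''.
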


The next theorem concerns the matrix ring over a {\em  commutative} ring $R$.

\begin{theorem}\label{t2a} Suppose $q > 1$ is an odd integer and $R$ is a commutative ring, not of characteristic 2,  that does not possess a homomorphic image isomorphic to $\mathbb{F}_{3}$, and $(q-1)\in U(R)$. Then the following assertionss are equivalent.

\begin{itemize}
 \item [(1)] Every matrix in $\mathbb{M}_{n}(R)$ is a sum of an idempotent matrix and an invertible $q$-potent matrix.

 \item [(2)] There exists a positive integer $n$ such that each matrix in $\mathbb{M}_{n}(R)$ is a sum of an idempotent matrix and an invertible $q$-potent matrix.

  \item [(3)] Every matrix in $\mathbb{M}_{n}(R)$ is a sum of an idempotent matrix and a $q$-potent matrix.

 \item [(4)] There exists a positive integer $n$ such that each matrix in $\mathbb{M}_{n}(R)$ is a sum of an idempotent matrix and a $q$-potent matrix.

 \item [(5)] The ring $R$ satisfies the identity $x^q=x$.

\end{itemize}

\end{theorem}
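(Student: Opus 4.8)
The plan is to establish the cycle of implications
\[ (5)\Rightarrow(1)\Rightarrow(3)\Rightarrow(4)\Rightarrow(5), \]
together with the trivial $(1)\Rightarrow(2)\Rightarrow(4)$; this gives the full equivalence. Here $(1)\Rightarrow(2)$, $(1)\Rightarrow(3)$, $(3)\Rightarrow(4)$ and $(2)\Rightarrow(4)$ are immediate, since an invertible $q$-potent is in particular a $q$-potent, and a statement holding for every $n$ holds for some $n$. Before anything else I would record the consequence of the hypotheses used throughout: as $q$ is odd, $q-1$ is even, so $q-1\in U(R)$ forces $2\in U(R)$; hence $R$, and every homomorphic image of $R$, has characteristic different from $2$.

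For $(4)\Rightarrow(5)$ I would descend to residue fields. If $\mathbb{M}_n(R)$ consists of sums of an idempotent and a $q$-potent for some $n$, the same holds for $\mathbb{M}_n(F)$ with $F=R/\mathfrak m$, $\mathfrak m$ any maximal ideal (reduce a decomposition of a lift entrywise). By the above, $F$ has characteristic $\neq 2$, and by hypothesis $F\not\cong\mathbb{F}_3$. If in addition $F\not\cong\mathbb{F}_5,\mathbb{F}_9$, then Theorem \ref{fm} (the equivalence of its conditions $(1)$ and $(7)$) gives that $F$ is finite with $(|F|-1)\mid(q-1)$, i.e. $F$ satisfies $x^q=x$. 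For $F\cong\mathbb{F}_5$ or $\mathbb{F}_9$ — which Theorem \ref{fm} excludes — I would argue directly: if $aI_n=E+U$ in $\mathbb{M}_n(F)$ with $E^2=E$ and $U^q=U$, then over $\overline F$ the idempotent $E$ has an eigenvalue $\lambda\in\{0,1\}$ on some nonzero $v$, whence $Uv=(a-\lambda)v$ and $(a-\lambda)^q=a-\lambda$; running this over all $a\in F$ shows that every element of $F$ lies in $Z\cup(1+Z)$, where $Z=\{x\in F:x^q=x\}$. A short inspection of $\mathbb{F}_5$ and $\mathbb{F}_9$ then shows that unless $(|F|-1)\mid(q-1)$ some scalar matrix $aI_n$ would be non-decomposable, a contradiction. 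Thus $x^q=x$ holds in every $R/\mathfrak m$, and since $R$ embeds into $\prod_{\mathfrak m}R/\mathfrak m$, it holds in $R$; this is $(5)$.

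The substantive step is $(5)\Rightarrow(1)$. A commutative ring $R$ with $x^q=x$ is reduced and von Neumann regular, and each of its residue fields is a finite field $\mathbb{F}_Q$ with $(Q-1)\mid(q-1)$; by the hypotheses none is $\mathbb{F}_2$ (characteristic $\neq 2$) nor $\mathbb{F}_3$ (no homomorphic image $\cong\mathbb{F}_3$), so each has $Q\ge 5$. I would then use the Pierce representation: $X=\mathrm{Spec}\,R$ is a Stone space, $R$ is the ring of global sections of a sheaf of fields on $X$, and — a standard feature of commutative von Neumann regular rings satisfying a polynomial identity — the residue-field cardinalities take finitely many values, so $X=\bigsqcup_Q X_Q$ is a finite clopen partition with all stalks over $X_Q$ equal to $\mathbb{F}_Q$. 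Fix $A\in\mathbb{M}_n(R)$. On each $X_Q$ the section $\mathfrak p\mapsto A(\mathfrak p)\in\mathbb{M}_n(\mathbb{F}_Q)$ takes only finitely many values, so $X_Q$ refines into finitely many clopen pieces on each of which $A$ equals a constant matrix $M\in\mathbb{M}_n(\mathbb{F}_Q)$. By Theorem \ref{q-tcd} (applicable since $Q\ge 4$), $M=E_M+U_M$ with $E_M$ idempotent and $U_M$ an invertible $d$-potent, $d=\mathrm{LCM}(Q-1,2)+1$; as $(Q-1)\mid(q-1)$ and $2\mid(q-1)$ we get $(d-1)\mid(q-1)$, so $U_M^{q-1}=I$ and $U_M$ is an invertible $q$-potent. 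Assembling these constant choices over the finitely many pieces produces locally constant, hence continuous, sections $E,U\in\mathbb{M}_n(R)$ with $E$ idempotent, $U$ an invertible $q$-potent (its inverse being the locally constant section $\mathfrak p\mapsto U(\mathfrak p)^{-1}$), and $A=E+U$. This yields $(1)$ and closes the cycle.

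The main obstacle I anticipate is precisely this globalization inside $(5)\Rightarrow(1)$: over a single finite field the decomposition is delivered by Theorem \ref{q-tcd}, but $R$ may be infinite, and one must glue the field-by-field decompositions into genuine matrices over $R$; the Boolean-space/Pierce-sheaf bookkeeping above is what carries this through, and the hypothesis that $R$ has no homomorphic image isomorphic to $\mathbb{F}_3$ is used exactly to exclude $\mathbb{F}_3$ — for which the decomposition genuinely fails (already in $\mathbb{M}_2(\mathbb{F}_3)$) — from ever appearing as a residue field. A secondary point requiring care is the separate treatment of $\mathbb{F}_5$ and $\mathbb{F}_9$ in $(4)\Rightarrow(5)$, since these lie outside the scope of Theorem \ref{fm}; the scalar-matrix eigenvalue argument indicated above disposes of them.
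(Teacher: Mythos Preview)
Your argument for $(4)\Rightarrow(5)$ has a genuine gap. You correctly deduce that $x^q=x$ holds in every residue field $R/\mathfrak m$, but then you assert that ``$R$ embeds into $\prod_{\mathfrak m}R/\mathfrak m$''. This embedding is equivalent to $J(R)=0$, which is \emph{not} part of the hypotheses and does not follow from what you have shown. Indeed, you never use the assumption $(q-1)\in U(R)$ beyond extracting $2\in U(R)$, yet the paper remarks explicitly (and proves via Lemma~\ref{Z-p2Z-idem-q}) that this hypothesis is essential: for $R=\mathbb{Z}/p^2\mathbb{Z}$ with $p$ an odd prime and $q$ chosen so that $p(p-1)\mid(q-1)$, condition~(4) holds for $n=1$ while (5) fails because $p$ is a nonzero nilpotent. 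Since in that example $p\mid(q-1)$, the hypothesis $(q-1)\in U(R)$ is violated, and your proof---which would go through verbatim without that hypothesis---must be using an unjustified step. The paper's proof fills exactly this gap: after observing that every prime ideal is maximal (so $J(R)=\mathrm{Nil}(R)$), it passes to Pierce stalks $S$, which are local with nil radical, and then uses $(q-1)\in U(R)$ in a careful computation with diagonalizable idempotents over $S/pS$ and $S/p^2S$ to force $J(S)=0$.

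Your treatment of $(5)\Rightarrow(1)$ via the Pierce sheaf is workable in principle but more elaborate than necessary; the paper simply observes that for a given $A\in\mathbb{M}_n(R)$ the subring $S\subseteq R$ generated by the entries of $A$ is finite (being finitely generated over a finite prime ring, with every element a root of $x^q-x$), hence a finite product of fields to which Corollary~\ref{q-tcd-eq} applies directly. This sidesteps the sheaf-theoretic bookkeeping entirely. Your separate handling of $\mathbb{F}_5$ and $\mathbb{F}_9$ in $(4)\Rightarrow(5)$ is fine, though the paper avoids it by invoking Corollary~\ref{q-tcd-eq} (which covers all fields with $|F|\geq 4$) rather than Theorem~\ref{fm}.
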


Further results  are motivated by the notion of a  torsion clean ring whose meaning we now sumarize. A decomposition $r=e+u$ of an element $r$ in a ring $R$ will be called {\it $n$-torsion clean decomposition} of $r$ if $e\in {\rm Id}(R)$ and $u\in U(R)$ is $n$-torsion, i.e., $u^n=1$. We shall say that such a decomposition of $r$ is {\it strongly $n$-torsion clean} if, additionally, $e$ and $u$ commute (see, e.g., \cite{DMa}). In the presence of such notation, we will say that the element $r$ is {\it weakly $n$-torsion clean} decomposed if $r=u+e$ or $r=u-e$. If, in addition, $ue=eu$, the element $r$ will be said to have a weakly $n$-torsion clean decomposition with the strong property.

\begin{theorem}\label{q-nt}
Suppose that $q > 1$ is an odd integer and $R$ is an integral ring not isomorphic to $\mathbb{F}_{3}$ having characteristic different from $2$. Then the following two conditions are equivalent.
 \begin{enumerate}
    \item [(1)] For every (for some) $n \in \mathbb{N}$, the ring $\mathbb{M}_{n}(R)$ is $(q-1)$-torsion clean.
    \item [(2)] $R$ is a finite field and $|R|=q$.
\end{enumerate}
\end{theorem}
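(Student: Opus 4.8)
The plan is to route both implications through a single invariant of matrix rings over finite fields. For a finite field $\mathbb{F}_Q$ and an integer $n\ge1$, let $e_n(Q)$ denote the least positive integer $m$ such that every matrix in $\mathbb{M}_n(\mathbb{F}_Q)$ is a sum of an idempotent and a unit $u$ satisfying $u^m=1$; with this terminology, ``$\mathbb{M}_n(R)$ is $(q-1)$-torsion clean'' means precisely that $e_n(|R|)=q-1$. The entire theorem then reduces to the assertion that $e_n(Q)=Q-1$ for every $n\ge1$ and every odd prime power $Q\ge5$.

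The heart of the matter is the lower bound $e_n(Q)\ge Q-1$, which I extract from scalar matrices alone. Suppose every matrix in $\mathbb{M}_n(\mathbb{F}_Q)$ is a sum of an idempotent and a unit $U$ with $U^m=1$. For $c\in\mathbb{F}_Q$, the idempotent $E$ appearing in a decomposition $cI=E+U$ splits $\mathbb{F}_Q^n=\ker E\oplus\mathrm{im}\,E$ into $U$-invariant subspaces on which $U$ acts as multiplication by $c$ and by $c-1$ respectively; hence $U^m=1$ forces $c\in H_m$ or $c-1\in H_m$, where $H_m=\{x\in\mathbb{F}_Q:x^m=1\}$, while the case $c=0$ forces $m$ to be even. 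It follows that $\mathbb{F}_Q=H_m\cup(1+H_m)$, so $Q\le 2|H_m|=2\gcd(m,Q-1)$; since a proper divisor of $Q-1$ is at most $(Q-1)/2$, this forces $(Q-1)\mid m$, and in particular $m\ge Q-1$. For the matching upper bound I invoke Theorem \ref{q-tcd} with $F=\mathbb{F}_Q$ and $d=\mathrm{LCM}(Q-1,2)+1=Q$ (using that $Q-1$ is even): it presents every matrix in $\mathbb{M}_n(\mathbb{F}_Q)$ as a sum of an idempotent and an invertible $Q$-potent $U$, and such a $U$ has $U^{Q-1}=1$; hence $e_n(Q)\le Q-1$. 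Therefore $e_n(Q)=Q-1$.

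Granting this, $(2)\Rightarrow(1)$ is immediate: if $R=\mathbb{F}_q$, then $q\ge5$ is an odd prime power (because $\mathrm{char}\,R\ne2$ and $R\not\cong\mathbb{F}_3$), so $e_n(q)=q-1$ for every $n$, which is (1). For $(1)\Rightarrow(2)$, assume $\mathbb{M}_n(R)$ is $(q-1)$-torsion clean for some $n$. A $(q-1)$-torsion unit $u$ satisfies $u^q=u$, so every matrix in $\mathbb{M}_n(R)$ is a sum of an idempotent and a $q$-potent; this is statement (1) of Theorem \ref{fm}. Since $R\not\cong\mathbb{F}_3$, either $R\cong\mathbb{F}_5$ or $R\cong\mathbb{F}_9$ — both finite fields — or else Theorem \ref{fm} applies and forces $R$ to be a finite field. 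In all cases $R=\mathbb{F}_Q$ with $Q\ge5$ an odd prime power, and then $q-1=e_n(Q)=Q-1$, so $q=Q=|R|$, which is (2).

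The step I expect to be the real content — and the only one not already available from Theorems \ref{fm} and \ref{q-tcd} — is the scalar-matrix computation, whose point is that $e_n(Q)$ does not depend on $n$; this $n$-independence is exactly what makes the equivalence hold uniformly in both the ``for every $n$'' and the ``for some $n$'' readings. The remaining work is bookkeeping: the exceptional fields $\mathbb{F}_5,\mathbb{F}_9$ excluded from Theorem \ref{fm} are themselves finite fields and so are harmless, while $\mathbb{F}_3$ must remain excluded because the upper bound $e_n(Q)\le Q-1$ rests on Theorem \ref{q-tcd}, which requires $Q\ge4$.
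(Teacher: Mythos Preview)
Your proof is correct and follows the same overall architecture as the paper: the upper bound $e_n(Q)\le Q-1$ comes from Theorem~\ref{q-tcd}, finiteness of $R$ is extracted from the idempotent-plus-$q$-potent decomposition, and a lower bound $e_n(Q)\ge Q-1$ is obtained from scalar matrices. The paper packages the first two steps as Corollary~\ref{q-tcd-eq} and the third as Lemma~\ref{Fq-tcd} (via Lemma~\ref{tcd-domain}).

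Where your argument genuinely differs is in the lower bound. The paper's proof of Lemma~\ref{Fq-tcd} invokes Lemma~\ref{Fq-3n}---and hence the character-sum Theorem~\ref{SDC1}---to reduce to the four fields $\mathbb{F}_3,\mathbb{F}_5,\mathbb{F}_7,\mathbb{F}_9$, and only then applies a counting argument. Your observation that $cI=E+U$ forces $U=cI-E$ to act as $c$ on $\ker E$ and as $c-1$ on $\mathrm{im}\,E$, giving $\mathbb{F}_Q=H_m\cup(1+H_m)$ and hence $Q\le 2\gcd(m,Q-1)$, is exactly that same counting argument applied uniformly, with no need for the reduction step. This is cleaner and more elementary; the detour through Theorem~\ref{SDC1} is unnecessary for this particular lemma. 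One cosmetic remark: for the ``$R$ is a finite field'' step you cite Theorem~\ref{fm}, which excludes $\mathbb{F}_5$ and $\mathbb{F}_9$ and forces you into a case split; citing Corollary~\ref{q-tcd-eq} (which only excludes $\mathbb{F}_2,\mathbb{F}_3$) instead, as the paper does, would avoid that detour.
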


\begin{definition}\label{tors} A ring $R$ is said to be {\it weakly $n$-torsion clean} ({\it with the strong property}) if there is $n\in \N$ such that every element of $R$ has a weakly $n$-torsion clean decomposition (with the strong property) and $n$ is the minimal possible natural number in these two equalities.
\end{definition}

Equipped  with Definition \ref{tors} we can now add a further equivalent statement to those appearing in Theorem   \ref{q-nt} provided $|R| >9$.
\begin{theorem}\label{th-wtcd-eq}
Let $p$ be an odd prime, and $q = p^{\alpha}$ for some integer $\alpha\geq 0$. If $R$ is an integral ring of characteristic not equal to $2$ and $|R| > 9$, then the following three conditions are equivalent.
\begin{enumerate}
    \item [(1)] For every (for some) $n \in \mathbb{N}$, the ring $\mathbb{M}_{n}(R)$ is $(q-1)$-torsion clean.
    \item [(2)] For every (for some) $n \in \mathbb{N}$, the ring $\mathbb{M}_{n}(R)$ is weakly $(q-1)$-torsion clean.
    \item [(3)] $R$ is a finite field and $|R|=q$.
  \end{enumerate}
\end{theorem}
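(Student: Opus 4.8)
The plan is to derive Theorem~\ref{th-wtcd-eq} from the already-established Theorems~\ref{q-nt}, \ref{fm} and~\ref{SDC1}, together with Definition~\ref{tors}. We may clearly assume $\alpha\ge 1$ (for $\alpha=0$ one has $q-1=0$ and none of (1)--(3) can hold), so $q=p^{\alpha}$ is an odd integer $\ge 3$. If $3\le q\le 9$, then~(3) fails because $|R|>9$; (1) fails by Theorem~\ref{q-nt} (it would force $|R|=q$); and~(2) fails because, were it true, the fact that $(q-1)$-torsion units are $q$-potent would give statement~(4) of Theorem~\ref{fm}, hence --- as $|R|>9$ rules out $\mathbb{F}_{3},\mathbb{F}_{5},\mathbb{F}_{9}$ --- statement~(7), so $R\cong\mathbb{F}_{Q}$ with $Q-1\mid q-1\le 8$, contradicting $|R|>9$. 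So all three conditions are equivalent (and false) there, and we may assume $q=p^{\alpha}$ is odd with $q>9$. Then $(1)\Leftrightarrow(3)$ is exactly Theorem~\ref{q-nt}, whose hypotheses ($q>1$ odd, $R$ integral, $R\not\cong\mathbb{F}_{3}$, $\Char R\ne 2$) all hold, the quantifiers over $n$ causing no trouble since they are built into that theorem.

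For $(2)\Rightarrow(3)$: assume $\mathbb{M}_{n}(R)$ is weakly $(q-1)$-torsion clean for some $n$. A $(q-1)$-torsion unit $U$ satisfies $U^{q}=U$, so every matrix in $\mathbb{M}_{n}(R)$ is a sum or a difference of a $q$-potent matrix and an idempotent matrix; Theorem~\ref{fm}, implication $(4)\Rightarrow(7)$ (again using $|R|>9$), yields $R\cong\mathbb{F}_{Q}$ with $Q-1\mid q-1$. If $Q<q$, then Theorem~\ref{q-nt} applied to $\mathbb{F}_{Q}$ shows $\mathbb{M}_{n}(\mathbb{F}_{Q})$ is $(Q-1)$-torsion clean, so every one of its elements admits a weakly $(Q-1)$-torsion clean decomposition; since $Q-1<q-1$, this contradicts the minimality of the exponent demanded by Definition~\ref{tors}. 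Hence $Q=q$, i.e.\ (3) holds.

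The substantive implication is $(3)\Rightarrow(2)$, and here Theorem~\ref{SDC1} is essential. With $R=\mathbb{F}_{q}$, the ring $\mathbb{M}_{n}(\mathbb{F}_{q})$ is $(q-1)$-torsion clean by Theorem~\ref{q-nt}, so every matrix already has a weakly $(q-1)$-torsion clean decomposition (take the ``$+$'' sign); it remains to prove that $q-1$ is the \emph{minimal} exponent, i.e.\ that $\mathbb{M}_{n}(\mathbb{F}_{q})$ is not weakly $m$-torsion clean for any $1\le m<q-1$. I would test this on the scalar matrices $\gamma I_{n}$, $\gamma\in\mathbb{F}_{q}$: an idempotent matrix commutes with $\gamma I_{n}$ and is diagonalizable with eigenvalues in $\{0,1\}$, so in any decomposition $\gamma I_{n}=U\pm E$ the unit $U$ is diagonalizable with eigenvalues among $\{\gamma,\gamma-1\}$ or $\{\gamma,\gamma+1\}$; hence $\gamma I_{n}$ admits a weakly $m$-torsion clean decomposition only if one of $\gamma,\gamma-1,\gamma+1$ is $0$ or has multiplicative order dividing $m$. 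Given $m<q-1$, choose a prime $\ell\mid q-1$ such that the exact power of $\ell$ dividing $q-1$ does not divide $m$ (possible, for otherwise $q-1\mid m$). If $\mathbb{F}_{q}$ contains an element $\gamma$ with $\gamma-1,\gamma,\gamma+1$ all nonzero and none an $\ell$-th power, then that power of $\ell$ divides each of their orders, so none of these orders divides $m$ and $\gamma I_{n}$ has no weakly $m$-torsion clean decomposition. For $\ell=2$ this is precisely the assertion that $\mathbb{F}_{q}$ contains three consecutive non-squares, which holds by Theorem~\ref{SDC1} because $q>9$; for each odd prime $\ell\mid q-1$ the analogous ``three consecutive non-$\ell$-th powers'' statement follows from the same Gauss/Jacobi-sum estimates underlying Theorem~\ref{SDC1} --- and is easier, since the density $1-1/\ell\ge 2/3$ of non-$\ell$-th powers is larger --- after at most a finite check of small $q\equiv 1\pmod{\ell}$. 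This yields the minimality, so $\mathbb{M}_{n}(\mathbb{F}_{q})$ is weakly $(q-1)$-torsion clean for every $n$, which is~(2).

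I expect the minimality step in $(3)\Rightarrow(2)$ to be the crux --- ruling out \emph{every} exponent below $q-1$ --- and it is here, and essentially only here, that the hypothesis $|R|>9$ is indispensable: for $q\in\{3,5,7,9\}$ the field $\mathbb{F}_{q}$ contains no three consecutive non-squares (Theorem~\ref{SDC1}), so the scalar-matrix obstruction above cannot capture the full $2$-part of $q-1$, and indeed $\mathbb{M}_{n}(\mathbb{F}_{q})$ then fails to be weakly $(q-1)$-torsion clean. Everything else is bookkeeping layered on Theorems~\ref{q-nt} and~\ref{fm}.
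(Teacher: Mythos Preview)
Your overall framework is sound and tracks the paper's: $(1)\Leftrightarrow(3)$ via Theorem~\ref{q-nt}, $(2)\Rightarrow(3)$ via Theorem~\ref{fm} plus minimality, and $(3)\Rightarrow(2)$ via the scalar-matrix obstruction. The diagonalisation argument for scalar matrices is correct: $\gamma I_n$ has a weakly $m$-torsion clean decomposition in $\mathbb{M}_n(\mathbb{F}_q)$ if and only if one of $\gamma,\gamma-1,\gamma+1$ lies in $\{x\in\mathbb{F}_q:x^m=1\}$.

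The weakness is in how you finish the minimality step of $(3)\Rightarrow(2)$. Having fixed $m<q-1$, you pick a prime $\ell\mid q-1$ with $\ell^k\nmid m$ (where $\ell^k\|q-1$) and then ask for three consecutive non-$\ell$-th-powers. For $\ell=2$ you correctly cite Theorem~\ref{SDC1}; for odd $\ell$ you hand-wave (``same Gauss/Jacobi-sum estimates \ldots\ after at most a finite check''). As written this is a gap: you neither prove the claim nor point to a result that does. In fact no character sums are needed here at all --- for $\ell\ge 3$ the set of $\ell$-th powers has at most $(q-1)/3$ elements, and the pure cardinality argument in the first half of the proof of Lemma~\ref{Fq-3n} already shows $C_n\cup(C_n-1)\cup(C_n+1)\subsetneq\mathbb{F}_q$.

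More to the point, the prime-by-prime split is unnecessary. The set $\{x\in\mathbb{F}_q:x^m=1\}\cup\{0\}$ is exactly the set of $(d+1)$-potents where $d=\gcd(m,q-1)$, so your scalar-matrix criterion says precisely that every element of $\mathbb{F}_q$ is a sum of a $(d+1)$-potent and a tripotent. Lemma~\ref{Fq-3n} then forces $(q-1)\mid d$ (since $q>9$), hence $(q-1)\mid m$, contradicting $m<q-1$. This is what the paper does: it obtains Theorem~\ref{th-wtcd-eq} as an immediate corollary of Corollary~\ref{q-wtcd-eq}, which in turn rests on Lemma~\ref{3q-domain} and thus Lemma~\ref{Fq-3n}. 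Replacing your $\ell$-argument by a direct appeal to Lemma~\ref{Fq-3n} closes the gap and aligns your proof with the paper's.
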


In this connection, simple calculations yield two  somewhat surprising facts. First,  $\mathbb{Z}_3$ is simultaneously weakly $1$-torsion clean and $2$-torsion clean.   Secondly,  $\mathbb{Z}_5$ is simultaneously weakly $2$-torsion clean and $4$-torsion clean. Some more detailed information about these properties of rings will be given in the sequel.

\medskip

Finally, in the case when the ring $R$ is commutative, it is convenient to  introduce here the following additional notions.   Let $g(x) = x^{n} - \sum\limits_{i = 0}^{n-1}a _{i} x^{i} \in R[x]$ be an unitary polynomial of degree $n\geq 1$. The companion matrix of $g(x):=g$ is the $n\times n$ matrix of the kind:

\medskip

$$
  C(g) =
  \left(
  \begin{array}{ccccc}
  0 & 0 & \cdots & 0 & a_{0}\\
  1 & 0 & \cdots & 0 & a_{1} \\
  0 & 1 & \cdots & 0 & a_{2} \\
  \vdots & \vdots & \ddots & \vdots & \vdots\\
  0 & 0 & \cdots & 1 & a_{n-1}
  \end{array}
  \right).
$$

\medskip

The trace of $g(x)$ is then defined as the trace of $C(g)$, namely $\tr(g) = \tr(C(g)) = a_{n-1}$. Moreover, the spectrum of the square matrix $A$ is denoted by $\spec(A)$ and the block-diagonal matrix with quadratic blocks $A_{1},...,A_{k}$ on diagonals is designed as $A_{1}\oplus...\oplus A_{k}$.

\section{Sums of $n$-potents and tripotents in finite fields}\label{ff}

In Sections \ref{ff}---\ref{ffpotent} for the  benefit  the reader accustomed to the  literature on finite fields,  we shall generally use $\F$ to denote the typical finite field of cardinality a prime power $q$ (rather than $\FQ$ as in the Introduction).

With regard to statement (5) of Theorem \ref{fm}, we shall  identify all finite fields $\F$ with the property that every element is presentable as the sum of an $n$-potent element and a tripotent element of $\F$.    Moreover, since the non-zero $n$-potent elements $\gamma \in \F$ are those for which $\gamma^{n-1} =1$, which is  the same as those for which  $\gamma^{m-1}= 1$, where $m-1=\mathrm{gcd}(n-1,q-1)$,  we can replace $n$ by $m$ or, more simply, assume $(n-1)|(q-1)$.  Under this assumption then the cardinality of $C_n$ is precisely $n$.

For $n \in \N$ with $n-1$ a divisor of $q-1$, define $C_n$ to be the set of $n$-potents of $\F$.   The idempotents  are $C_2= \{0,1\}$.  If $q$ is even (and so a power of 2), then the tripotents are $C_3=C_2$ . If $q$ is odd (and so a power of an odd prime), then $C_3=\{0, \pm1\}$.  Further, $C_q=\F$, i.e., every element is a $q$-potent.    Also, when $q$ is odd, $C_{(q+1)/2}$ is the set of elements of $\F$ that are squares in $\F$.  It is the  possibility that every element of $\F$ might be the sum of a $(q+1)/2$-potent and a tripotent that poses  the greatest challenge.  To this end  we shall determine the  number of consecutive triples $\gamma-1,\gamma,\gamma+1$ of non-square elements of $\F$, thereby counting the number of $\gamma \in \F$ which can be central in such a triple.

\medskip

 We now apply  Theorem \ref{SDC1} to the question of whether all members of $\F$ can be sums of $n$-potents and tripotents.

\begin{lemma}\label {Fq-3n}
Let $q$ be a prime power and $n$ an integer such that $1<n\le q$ and $(n-1)|(q-1)$. Then every element of $\F$ is a sum of an $n$-potent and a tripotent if, and only if, either $n=q$ or $q  \in  \{3,5,7,9\}$ and $n=\frac{q+1}{2}$.
\end{lemma}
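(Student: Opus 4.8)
The plan is to translate the question ``every element of $\F$ is a sum of an $n$-potent and a tripotent'' into a statement about the set $C_n + C_3$ and count. The easy direction is sufficiency: if $n=q$ then $C_n = \F$, so trivially every element is a sum of a $q$-potent and (say) $0$; and for the sporadic cases $q \in \{3,5,7,9\}$ with $n = (q+1)/2$, I would verify directly that the squares $C_{(q+1)/2}$ together with $\{0,\pm 1\}$ cover $\F$, which is a finite check (and is exactly what Theorem \ref{SDC1} predicts, since $N_Q = 0$ there means no $\gamma$ is ``missed''). So the substance is the necessity direction: assuming every element of $\F$ is a sum of an $n$-potent and a tripotent, I must force either $n=q$ or one of the four small cases.

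First I would dispose of the even characteristic case quickly: if $q = 2^k$, then $C_3 = C_2 = \{0,1\}$, so $C_n + C_3 = C_n \cup (C_n + 1)$. Since $|C_n| = n$ and $(n-1)\mid(q-1)$, for this union to be all of $\F$ I'd get $|C_n \cup (C_n+1)| \le 2n$, forcing $2n \ge q$; combined with $(n-1)\mid(q-1)$ and $n \le q$, the only divisor of $q-1$ that is at least $(q-1)/2$ is $q-1$ itself (since $q > 2$), giving $n = q$. Then I turn to odd $q$, where $C_3 = \{0,\pm 1\}$, so $C_n + C_3 = C_n \cup (C_n+1)\cup(C_n-1)$ has size at most $3n$, hence $3n \ge q$, i.e.\ $n-1 \ge (q-3)/3$. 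Again $n-1$ is a divisor of $q-1$, so I'd list the divisors of $q-1$ that are $\ge (q-3)/3$: these are $q-1$ (giving $n=q$), $(q-1)/2$ (giving $n = (q+1)/2$, possible when $q$ is odd so $(q-1)/2$ is an integer), and $(q-1)/3$ (giving $n = (q+2)/3$, possible only when $3 \mid q-1$ and then only if $(q-1)/3 \ge (q-3)/3$, which always holds, so this case genuinely needs to be eliminated).

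The main obstacle is therefore ruling out the two subcritical cases $n = (q+1)/2$ for $q > 9$ and $n = (q+2)/3$ when $3 \mid q-1$. For $n = (q+1)/2$: here $C_n$ is exactly the set of squares (including $0$), and an element $\gamma$ fails to be a sum of a square-$n$-potent and a tripotent precisely when all three of $\gamma, \gamma-1, \gamma+1$ are non-squares — that is, $\gamma$ is the centre of a consecutive non-square triple. By Theorem \ref{SDC1}, $N_Q > 0$ whenever $Q > 9$, so there is such a $\gamma$, and the covering fails; for $Q \le 9$ (i.e.\ $q \in \{3,5,7,9\}$, since $q$ odd and $>1$) we have $N_Q = 0$ and the covering succeeds. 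For $n = (q+2)/3$: here $|C_n| = (q-1)/3$, so $|C_n \cup (C_n\pm 1)| \le 3\cdot\frac{q-1}{3} = q-1 < q$, and the union can never be all of $\F$ — this case is eliminated purely on cardinality grounds, needing no character-sum input. I'd also double-check the boundary: when $n = (q+1)/2$ with $q \le 9$, a brute-force tabulation of squares confirms the claimed exceptions are exactly $q = 3,5,7,9$, which matches the statement.

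Putting these together: necessity forces $n \in \{q,\ (q+1)/2,\ (q+2)/3\}$ by the counting bound, the last is impossible by a sharper count, and the middle one survives exactly for $q \in \{3,5,7,9\}$ by Theorem \ref{SDC1}; sufficiency in those cases is the finite verification above. This completes the proof.
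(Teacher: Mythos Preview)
Your approach is exactly the paper's: a cardinality bound on $C_n+C_3$ reduces the problem to $n=q$ or $n=(q+1)/2$, and then Theorem~\ref{SDC1} settles the latter. However, two of your counts are too loose and leave genuine gaps.

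In the even-$q$ case your bound $|C_n\cup(C_n+1)|\le 2n$ does not suffice: for $q=4$ it gives only $n\ge 2$, i.e.\ $n-1\ge 1$, and both divisors $1,3$ of $q-1=3$ survive, so you have not forced $n-1=q-1$. The fix (which the paper uses) is to observe that in characteristic~$2$ one has $0,1\in C_n\cap(C_n+1)$, whence $|C_n\cup(C_n+1)|\le 2n-2$; now covering forces $n-1\ge q/2$, and since $q-1$ is odd its largest proper divisor is at most $(q-1)/3<q/2$.

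In the odd-$q$ case your elimination of $n=(q+2)/3$ contains an error: you write $|C_n|=(q-1)/3$, but in fact $|C_n|=n=(q+2)/3$, so the crude bound $3|C_n|=q+2$ does not beat $q$. The paper repairs this by first noting the automatic overlaps $0\in C_n\cap(C_n-1)$ and $1\in C_n\cap(C_n+1)$, so $|D|\le 3n-2$; at $n=(q+2)/3$ this equals $q$, and equality would force $q=3n-2$, whence $n$ is odd (as $q$ is). But then $n-1$ is even, so $-1\in C_n$, giving a third overlap $-1\in C_n\cap(C_n-1)$ and $|D|\le 3n-3<q$. The same bound $3n-2$ also rules out any divisor $n-1<(q-1)/3$ (e.g.\ $n-1=1$ at $q=5$ or $n-1=2$ at $q=9$), cases your cruder bound $3n$ lets through.

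With these two sharpenings your argument becomes complete and coincides with the paper's.
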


\begin{proof}
Evidently, we can assume $q>2$ and $1<n<q$.

First, we deal with the case when $q$ is even,  i.e., $ q (\ne 2) $ is a power of $2$.  As we have noted, then here $C_3=C_2=\{0,1\}$.  Then the elements $0,1$ are both in $C_n$ and $C_n+1$ and the condition that every member of $\F$ is a sum of an $n$-potent and a tripotent is equivalent to
\begin{equation}\label{eveneq}
 C_n\cup ( C_n+1) =\F.
\end{equation}
 We can therefore suppose $q\ge 4$ and $n-1 \le \frac{q-1}{3}$ (since $q-1$ is odd).    Consequently, $C_n\cup(C_n+1)$ has cardinality at most $2n-2 \le \frac{2q-2}{3}<q$,
so that (\ref{eveneq}) cannot hold.

Now suppose $q$ is odd. Then the set of tripotents $C_3=\{0, 1,-1\}$.  Obviously, for any $n$ with $(n-1)|(q-1)$, we have $0, 1 \in C_n$ and so
$- 0 \in  C_n \cap(C_n-1)$ and $1 \in C_n\cap(C_n+1)$ and the condition that every member of $\F$ is the sum of an $n$-potent and a tripotent is equivalent to
\begin {equation} \label{oddeq}
D:=C_n\cup(C_n-1)\cup(C_n+1) =\F.
\end{equation}
Now, if $d$ is the cardinality of $D$, then $d\le 3n-2$. Hence, if $n-1\le \frac{q-1}{3}$, i.e.,  $n \leq \frac{q+2}{3}$ and (\ref{oddeq}) holds, then
\[ q \leq 3n-2 \le 3\left(\frac{q+2}{3}\right)-2 =q,\]
where equality throughout implies that $n$ is odd (since $q$ is odd).  But if $n$ is odd, then also  $-1 \in C_n\cap (C_n-1)$  and actually $d \leq 3n-3$ which implies that  (\ref{oddeq}) cannot hold.

Hence, we can suppose $n-1=\frac{q-1}{2}$, and $C_n$ is the set of squares (including $0$) in $\F$. Now, by (\ref{oddeq}), it is {\em not} true that every element in $\F$ is the sum of an $n$-potent and a tripotent if,  and only if, there exists a non-square $\gamma \in \F$ such that both $\gamma+1$ and $\gamma-1$ are also non-squares,  i.e., $q \leq 9$  by a  crucial application of  Theorem \ref{SDC1}.
\end{proof}

We comment that originally we derived a proof of Lemma \ref{Fq-3n} in the case in which $q$ is prime by means of  \cite{BH84}, Theorem 2.3 with $\ell=3$.

\medskip

It is appropriate now to  continue with the application of Lemma \ref{Fq-3n} to the  full matrix ring $ M_n(R)$. Recall that a non-zero ring is said to be {\it an integral ring} or, in other words, {\it an integral domain}, provided that it is commutative and also does not possess non-trivial zero divisors (i.e., the product of any two non-zero elements is again non-zero). Specifically,  we have  a surprising result.  For convenience of Section \ref{main}, we frame  it using  notation  that differs from the rest of this section in regard to the meaning of  the symbols $n$ and $q$.

\begin{lemma}\label{3q-domain}
Let $q> 1$ be an integer and let $R$ be an integral ring. If, for some $n \in \mathbb{N}$, each matrix in $\mathbb{M}_{n}(R)$ is representable as a sum of a tripotent and a $q$-potent, then $R$ is a finite field and
\begin{enumerate}
   \item[(1)] If $|R| \in \{3,5,7,9\}$, then $\frac{(|R|-1)}{2} \,|\, (q-1)$;

   \item[(2)] if $|R| \not\in \{3,5,7,9\}$, then $(|R|-1) \,|\,(q-1)$.
\end{enumerate}
\end{lemma}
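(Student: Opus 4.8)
The plan is to pass from the matrix hypothesis to the element‑wise statement that every element of $R$ is a sum of a tripotent and a $q$-potent, then to deduce finiteness of $R$ by a crude count, and finally to read off the divisibility conditions from Lemma~\ref{Fq-3n}. \emph{Step 1 (a scalar‑matrix reduction).} Fix $a\in R$ and apply the hypothesis to the central matrix $aI_n\in\mathbb{M}_n(R)$, getting $aI_n=T+P$ with $T^3=T$ and $P^q=P$. The crucial point is that $T$ and $P$ commute: indeed $TP=T(aI_n-T)=aT-T^2=(aI_n-T)T=PT$. Now pass to the fraction field $K$ of the domain $R$; its prime subfield contains $0,1,-1$, and the minimal polynomial of $T$ divides $x^3-x$, which splits over $K$ (as $x(x-1)(x+1)$ if $\Char R\ne 2$, as $x(x-1)^2$ if $\Char R=2$). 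Hence $K^n$ is the direct sum of the generalized eigenspaces $W_0,W_1,W_{-1}$ of $T$, and, as $P$ commutes with $T$, it preserves each $W_\lambda$. On $W_\lambda$ we have $T=\lambda I+N$ with $N$ nilpotent, so $P|_{W_\lambda}=(a-\lambda)I-N$ has $a-\lambda$ as an eigenvalue; since $P^q=P$, every eigenvalue $\mu$ of $P$ satisfies $\mu^q=\mu$, so whenever $W_\lambda\ne 0$ we get $(a-\lambda)^q=a-\lambda$. As $n\ge 1$ at least one $W_\lambda$ is nonzero, so there is $\lambda\in\{0,1,-1\}$ with $(a-\lambda)^q=a-\lambda$ in $K$, hence in $R$; therefore $a=\lambda+(a-\lambda)$ exhibits $a$ as a sum of a tripotent and a $q$-potent of $R$.

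\emph{Step 2 (finiteness and divisibility).} For each of the three values $\lambda$, the equation $x^q=x$ has at most $q$ solutions in the field $K$, so by Step~1 $|R|\le 3q$; a finite integral ring is a field, so $R$ is a finite field, say $|R|=q_0$. A nonzero $\gamma\in\mathbb{F}_{q_0}$ satisfies $\gamma^q=\gamma$ exactly when its order divides $\gcd(q-1,q_0-1)$, so the $q$-potents of $\mathbb{F}_{q_0}$ are precisely its $m$-potents, where $m-1:=\gcd(q-1,q_0-1)$; note $1<m\le q_0$ and $(m-1)\mid(q_0-1)$. By Step~1 every element of $\mathbb{F}_{q_0}$ is a sum of an $m$-potent and a tripotent, so Lemma~\ref{Fq-3n} forces either $m=q_0$, or $q_0\in\{3,5,7,9\}$ and $m=\tfrac{q_0+1}{2}$. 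In the first case $\gcd(q-1,q_0-1)=q_0-1$, i.e. $(q_0-1)\mid(q-1)$; in the second $\gcd(q-1,q_0-1)=\tfrac{q_0-1}{2}$, i.e. $\tfrac{q_0-1}{2}\mid(q-1)$. Thus if $|R|\notin\{3,5,7,9\}$ only the first case can occur and $(|R|-1)\mid(q-1)$, giving (2); and if $|R|\in\{3,5,7,9\}$ then either case yields $\tfrac{|R|-1}{2}\mid(q-1)$, giving (1).

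The step I expect to be the main obstacle is Step~1: first, spotting that choosing the \emph{scalar} matrix $aI_n$ is what forces $T$ and $P$ to commute (which is the entire engine of the proof), and then making the generalized‑eigenspace argument airtight when $\Char R=2$, where $T$ need not be diagonalizable and one genuinely needs the nilpotent summand $N$. In characteristic $\ne 2$ one has $N=0$ and the argument collapses to a one‑line eigenvalue comparison; everything after Step~1 is routine bookkeeping.
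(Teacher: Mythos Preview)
Your proof is correct and follows essentially the same route as the paper's: both apply the hypothesis to the scalar matrix $aI_n$, pass to the fraction field, and use the fact that the tripotent $T$ has all eigenvalues in $\{0,\pm1\}$ to conclude that one of $a,a-1,a+1$ is a $q$-potent, after which finiteness and Lemma~\ref{Fq-3n} finish the job. The only cosmetic difference is that the paper triangularizes $T$ (which works uniformly in all characteristics and automatically makes $P=aI_n-T$ upper triangular too, so one reads off the diagonal without ever invoking commutativity or a char~$2$ case split), whereas you use the primary decomposition and the commutation $[T,P]=0$; these are two phrasings of the same linear-algebra fact.
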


\begin{proof}
Suppose that for some $n \in \mathbb{N}$ every element from $\mathbb{M}_{n}(R)$ can be written as a sum of a tripotent and a $q$-potent. With a choice of  $a \in R$, there exist $A, B \in \mathbb{M}_{n}(R)$ with the properties $aI_{n} = A + B$, $A^{3} = A$ and $B^{q} = B$. Denote by $F$ the field of fractions of $R$. It is not too difficult to see that, for some invertible matrix $C \in \mathbb{M}_{n}(F)$, the matrix $CAC^{-1}$ is upper triangular with elements on the main diagonal equal to either $0$ or $\pm 1$ only. It now easily follows from the equality $aI_{n} = CAC^{-1} + CBC^{-1}$ that either $a^{q} = a$ or $(a \pm 1)^{q} = (a \pm 1)$. In fact, the matrices $aI_n$ and $CAC^{-1}$ are uppertriangular, hence the matrix $CBC^{-1}$ is also upper triangular. Since $(CAC^{-1})^3=CAC^{-1}$ and $(CBC^{-1})^q=CBC^{-1}$, similar equalities hold for the diagonal elements. In particular, the diagonal elements of the matrix $CAC^{-1}$ are $0$, $\pm 1$. Thus, comparing the diagonal elements for an element $a$, one of the elements $a$, $a+1$, $a-1$ is a $q$-potent, as claimed. This means that $R$ is a finite ring, and hence a finite field. Therefore, any element from $R$ is the sum of a tripotent and a $q$-potent. We, finally, can employ  to get our claim, as expected.
\end{proof}

\section{Proof of  Theorem \ref{SDC1}} \label{SecSDC1}

We proceed to the detailed  argument which will evaluate $N_q$ in every case and  verify Theorem \ref{SDC1}.  Suppose throughout $q$ is an odd prime and let $\la$ be the quadratic character on $\F$. Thus

\begin{equation}\label{sumla}
\sum_ \alpha\la(\alpha)=\sum_{\alpha \neq 0}\la(\alpha)=0,
\end{equation}
where $\sum_{\alpha}$ stands for $\sum_{\alpha \in \F}$ and $\sum_{\alpha \neq 0}$ means that $\alpha =0$ is excluded from the sum. We need further evaluations of character sums. The first can be found in \cite[Theorem 2.1.2]{BEW98}.

\begin{lemma}\label{BEW}
Suppose $q$ is an odd prime power. Let $f(x)=x^2+bx+c \in \F[x]$. Assume $b^2-4c \neq 0$. Then
\[\sum_{\alpha \in \F}\la(f(\alpha))=-1.\]
\end{lemma}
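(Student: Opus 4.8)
The plan is to reduce the sum to a standard one by completing the square, and then to evaluate the resulting sum by counting square roots. Since $q$ is odd, $2$ is invertible in $\F$, so the substitution $\alpha = \beta - b/2$ is a bijection of $\F$ onto itself which transforms $f(\alpha)=\alpha^2+b\alpha+c$ into $\beta^2+d$, where $d = c - b^2/4 = -(b^2-4c)/4 \neq 0$ by hypothesis. Hence it suffices to prove that $\sum_{\beta \in \F}\la(\beta^2+d) = -1$ whenever $d \neq 0$.

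For the evaluation, I would first note that for every $z \in \F$ the number of $\beta \in \F$ with $\beta^2 = z$ equals $1+\la(z)$ (this is immediate for $z=0$, and for $z\neq 0$ it simply records whether $z$ is a square). Grouping the terms of $\sum_\beta \la(\beta^2+d)$ according to the value of $z=\beta^2$ therefore gives
\[ \sum_{\beta \in \F}\la(\beta^2+d) = \sum_{z \in \F}\bigl(1+\la(z)\bigr)\la(z+d) = \sum_{z \in \F}\la(z+d) + \sum_{z \in \F}\la(z)\la(z+d). \]
The first sum on the right equals $\sum_{w\in\F}\la(w)=0$ by \eqref{sumla}. For the second, I would use multiplicativity of $\la$ to write $\la(z)\la(z+d)=\la(z^2+dz)$, observe that the term $z=0$ contributes $0$, and for $z\neq 0$ factor out $z$ to get $\la(z^2+dz)=\la(z^2)\la(1+dz^{-1})=\la(1+dz^{-1})$. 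As $z$ runs over $\F^{\times}$, so does $dz^{-1}$ (here $d\neq 0$ is used), and hence $1+dz^{-1}$ runs over $\F\setminus\{1\}$; consequently the second sum equals $\sum_{w\neq 1}\la(w) = \bigl(\sum_{w\in\F}\la(w)\bigr)-\la(1) = 0-1 = -1$, again by \eqref{sumla}. Adding the two contributions yields $-1$, as required.

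There is no serious obstacle here: the computation is entirely elementary once one notices the "count the preimages of $\beta\mapsto\beta^2$" device, and the only points needing a little care are that $q$ be odd (so that $2\in U(\F)$ and $\la$ is the genuine quadratic character) and that $d\neq 0$ (so that $z\mapsto z+d$ and $z\mapsto dz^{-1}$ are the bijections invoked above). One could alternatively deduce the identity from the evaluation of the Jacobi sum $J(\la,\la)$ or from Gauss-sum machinery, but the direct argument above is self-contained and suffices for everything we need in the sequel.
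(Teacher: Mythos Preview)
Your argument is correct. The paper does not supply its own proof of this lemma but merely cites \cite[Theorem 2.1.2]{BEW98}; your proposal, by contrast, gives a complete and self-contained elementary derivation. The completing-the-square reduction and the ``count preimages of $\beta\mapsto\beta^2$'' device are standard and carried out cleanly; the only delicate points (invertibility of $2$, nonvanishing of $d$) are explicitly noted. So your proof goes beyond what the paper provides, and nothing further is needed.
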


The next result concerns the Jacobsthal sum $J(a) =\sum_{\alpha }\la(x(x^2+a)), a \in \F$. Recall that
  \[ \la(-1)= \begin{cases} 1,  &\text{if } q \equiv 1 \ (\m 4),\\
  -1,  &\text{if } q \equiv 3\  (\m 4).
     \end{cases}     \]
Hence, by replacing $\alpha$ by $-\alpha$ in the expression for $J((a)$, we see that if $ q \equiv 3\ (\m 4)$, then $J(a)=0$. On the other hand, when $q \equiv 1 \ (\m 4)$, we use the following evaluation from \cite[Theorem 2]{KR87}.

\begin{lemma}[Katre and Rajwade, 1987]\label{KR}
Suppose $q=p^r \equiv 1 \ (\m 4)$, where $p$ is an odd prime. If $p \equiv 3\ (\m 4)$ (so that $r$ is even), let $s=(-1)^{r/2}\sqrt{q}$. If $p \equiv 1\ (\m 4)$, define $s$ uniquely by $q=s^2+t^2, p \nmid s, s \equiv 1 \ (\m 4)$. Then
\[ J(a) = \begin{cases} -2s,  & \mbox{ if $a$ is a fourth power in $\F$,}\\
2s, & \mbox{ if $a$ is a square but not a fourth power in $\F$.}
\end{cases} \]
\end{lemma}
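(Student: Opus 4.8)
The plan is to express the Jacobsthal sum $J(a)$ through a single quartic Jacobi sum and then fall back on the classical evaluation of the latter. Since $J(a)=\sum_x\la(x^3+ax)$ is a sum of values in $\{0,\pm1\}$, it is a rational integer, and throughout we use that $q\equiv 1\ (\m 4)$ forces $\la(-1)=1$. Fix a character $\chi$ of $\F^\ast$ of exact order $4$, so that $\chi^2=\la$ and $\chi(-1)=\pm1$. First I would put $J(a)$ into summation form adapted to $\chi$: the $x=0$ term vanishes and, because $\la(-1)=1$, the contributions of $x$ and $-x$ coincide; parametrizing by $u=x^2$ and noting $\la(\sqrt u)=\chi(u)$ for a nonzero square $u$, one gets $J(a)=\sum_{u\in(\F^\ast)^2}2\chi(u)\la(u+a)$. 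Since $\chi(u)+\overline\chi(u)$ equals $2\chi(u)$ on squares and $0$ on non-squares, this upgrades to a sum over all $u\neq 0$, giving $J(a)=S+\overline S$ with $S=\sum_{u\neq 0}\chi(u)\la(u+a)$, so that $J(a)=2\,\mathrm{Re}\,S$.

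Next I would identify $S$ with a Jacobi sum. As $a$ is a nonzero square, the substitution $u=-av$ yields $S=\chi(-a)\,\mathcal{J}(\chi,\la)$, where $\mathcal{J}(\chi,\psi)=\sum_v\chi(v)\psi(1-v)$. The routine Gauss-sum calculus --- $\mathcal{J}(\chi,\psi)=g(\chi)g(\psi)/g(\chi\psi)$ together with $g(\chi)g(\overline\chi)=\chi(-1)q$ and $g(\la)^2=q$ --- gives $\mathcal{J}(\chi,\la)=\chi(-1)\,\mathcal{J}(\chi,\chi)$, whence $S=\chi(a)\,\mathcal{J}(\chi,\chi)$ after using $\chi(-1)^2=1$. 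Because $a$ is a square, $\chi(a)\in\{\pm1\}$ is real, and we arrive at the clean formula
\[
J(a)=2\,\chi(a)\,\mathrm{Re}\,\mathcal{J}(\chi,\chi),
\]
in which $\chi(a)=1$ precisely when $a$ is a fourth power and $\chi(a)=-1$ precisely when $a$ is a square but not a fourth power. Thus the lemma is reduced to the single assertion $\mathrm{Re}\,\mathcal{J}(\chi,\chi)=-s$.

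The main obstacle is this last evaluation of the rational quartic Jacobi sum. That $\mathcal{J}(\chi,\chi)\in\mathbb{Z}[i]$ with $|\mathcal{J}(\chi,\chi)|^2=q$ is standard, so $\mathrm{Re}\,\mathcal{J}(\chi,\chi)=\pm s$ for some $s$ with $s^2+t^2=q$; the genuine work is pinning down $s$ together with its sign and congruence normalization. When $p\equiv 1\ (\m 4)$, I would invoke the classical determination --- via Stickelberger's congruence, or the factorization of $g(\chi)$ in the relevant cyclotomic integers --- that fixes $s$ by $p\nmid s$ and $s\equiv 1\ (\m 4)$. When $p\equiv 3\ (\m 4)$ (so $r$ is even and $q=p^r\equiv 1\ (\m 4)$), the point is that $p$ is inert in $\mathbb{Z}[i]$, forcing the Gaussian integer $\mathcal{J}(\chi,\chi)$ of norm $p^r$ to be an associate of $p^{r/2}$; here I would use the Hasse--Davenport lifting relation for $g(\chi)$ from the subfield on which $\chi$ becomes quadratic to rule out the associates $\pm i\,p^{r/2}$ (i.e.\ to show $\mathcal{J}(\chi,\chi)$ is real, so $t=0$) and to produce the sign $(-1)^{r/2}$. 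Matching these normalizations to the statement finishes the proof; it is precisely this normalization bookkeeping, rather than any single hard estimate, where the real difficulty resides.
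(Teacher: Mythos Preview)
The paper does not give its own proof of this lemma: it is quoted verbatim as \cite[Theorem~2]{KR87} and used as a black box in the evaluation of $N_q$. So there is no in-paper argument to compare against.

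Your reduction is correct and is in fact the standard route. The manipulation $J(a)=2\chi(a)\,\mathrm{Re}\,\mathcal{J}(\chi,\chi)$ for a quartic character $\chi$ (with $\chi^2=\la$) is exactly how one connects the Jacobsthal sum to the quartic Jacobi sum, and your Gauss-sum identities are applied correctly, including the independence of the formula from the choice between $\chi$ and $\overline\chi$. You are also right that the entire content of the lemma lies in the assertion $\mathrm{Re}\,\mathcal{J}(\chi,\chi)=-s$ with the stated normalization of $s$; this sign resolution is precisely the point of the Katre--Rajwade paper (its title is ``Resolution of the sign ambiguity \ldots''). Your proposed tools for that step---Stickelberger-type congruences when $p\equiv 1\ (\m 4)$, and Hasse--Davenport lifting together with inertness of $p$ in $\Z[i]$ when $p\equiv 3\ (\m 4)$---are the appropriate ones, though of course carrying them through is a nontrivial piece of work that you have only outlined. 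In short, your sketch is sound and aligned with the literature; the paper itself simply cites the result rather than redoing that normalization analysis.
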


Now, let $N_q$ be the number of consecutive triples of non-squares $\gamma-1, \gamma, \gamma+1 \in \F$. Evidently, we have

\begin{equation}\label{Neq}
N_q  = \frac{1}{8}\sum_{\alpha \neq 0, \pm 1}(1-\la(\alpha))(1-\la(\alpha-1))(1-\la(\alpha+1) ).
\end{equation}

Now, set
\[ S_1=\sum_{\alpha \neq 0, \pm1}\la(\alpha); \quad S_2=\sum_{\alpha \neq 0, \pm1}\la(\alpha -1); \quad S_3=\sum_{\alpha \neq 0, \pm1}\la(\alpha+1)\]
  and
\[T_1= \sum_{\alpha \neq 0, \pm 1}\la(\alpha(\alpha-1)); \;T_2= \sum_{\alpha \neq 0, \pm 1}\la(\alpha(\alpha+1)); \;  T_3= \sum_{\alpha \neq 0, \pm 1}\la(\alpha^2-1).   \]
  Then
\[N_q= \frac{1}{8}\left(q-3 - \sum_{i=1}^{3} S_i+ \sum_{i=1}^3T_i - J(-1)\right). \]
  From (\ref{sumla}),
\[S_1 =\sum_\alpha \la(\alpha)- 1 -\la(-1)=-1 - \la(-1); \; S_2=-\la(-2) - \la(-1); \; S_3= -1-\la(2), \]
whereas, from Lemma \ref{BEW},
\[ T_1=\sum_{\alpha \neq -1}\la(\alpha(\alpha-1))=-1-\la(2); \; T_2= -1- \la(2); \; T_3= -1- \la(-1). \]
Further,  $J(-1)=0$ if $q  \equiv 3\ (\m 4)$. But, when  $q \equiv 1\ (\m  4)$, by Lemma \ref{KR},  we have
\[J(-1) = \begin{cases} -2s, & \mbox{if } q \equiv 1\ (\m 8),\\
                                       2s, & \mbox{if } q \equiv 5\ (\m 8).
                                       \end{cases} \]

We also have the well-known facts that
\[\la(2)=\begin{cases} 1, & \mbox{if } q \equiv \pm 1 \ (\m 8),\\

                                                                                                       -1, &  \mbox{ if } q \equiv \pm 3 \ (\m 8).
                                                                                                       \end{cases} \]
and
     \[\la(-2)=\begin{cases} 1, & \mbox{if } q \equiv  1 \mbox{ or } 3\ (\m 8),\\

                                                                                                       -1, &  \mbox{ if } q \equiv  5 \mbox{ or }7 \ (\m 8).
                                                                                                       \end{cases} \]

We now evaluate $N_q$ from (\ref{Neq}) and the various expressions for $S_i, T_i,J(-1)$. We require to consider five cases.

\noindent
{\bf Case 1:} {\em If $ q \equiv 7\  (\m 8)$,  then $\ds{N_q =\frac{q-7}{8}}.$
\begin{proof}
  Here  $\la(-1)=-1, \la(2) =1, \la(-2)=-1$. Thus $S_1 =0, S_2= 2, S_3=-2, T_1=T_2=-2, T_3=0 $ while $J(-1)=0$.  Hence
  \[8 N_q=( q-3 +0-4)= q-7.\]
\end{proof}

{\em Small examples of Case 1 include $N_7=0, N_{23} =2$.}

\medskip
\noindent
{\bf Case 2:} {\it If $ q \equiv 3\  (\m 8)$,  then $\ds{N_q =\frac{q-3}{8}}.$

\begin{proof} Now $\la(-1)=-1, \la(2)=-1, \la(-2)=1$. Thus $S_1= S_2=S_3=T_1=T_2=T_3=0$. Also, $J(-1)=0$.
\end{proof}

{\em Small examples of Case 2 include $N_3=0, N_{11}=1, N_{19} =2$.}

\medskip
\noindent
{\bf Case 3:} {\em If $ q \equiv 5\  (\m 8)$, then}
  \[N_q =\frac{q-2s-3}{8}, \]
{\em where $q=s^2+t^2$, $s \equiv 1 \ (\m 4)$.}

\begin{proof}
Here $q=p^r$, where also $p \equiv 5\ (\m 8)$ and $r$ is odd. We have $\la(-1)=1, \la(2)= \la(-2)=-1$. Hence, $S_1=-2, S_2= S_3=0, T_1=T_2=0, T_3=-2$.

Further, let $\gamma$ be a primitive element in $\F$. Then $-1= \gamma^\frac{q-1}{2}$ is the square of $\gamma^\frac{q-1}{4}$ but not a fourth power, since $\frac{q-1}{4}$ is odd.
Hence  $J(-1)=2s$.
\end{proof}

{\em In Case 3, since $|s|< \sqrt{q}$, then $N_q> \frac{1}{8}(q-2 \sqrt{q}-3)$.

Small examples of Case 3 include  $N_5=0$ (since $5=1^2+2^2$), $N_{13} =2$ (since $13=(-3)^2+2^2)$),  $N_{29}=2$ (since $29=5^2+2^2$).

\medskip
\noindent
{\bf Case 4:} {\it If $ q=p^r \equiv 1\  (\m 8)$, where $p \equiv 1\ (\m 4)$, then  }
     \[N_q =\frac{q+2s-3}{8}, \]
{\it where $q=s^2+t^2$, $s \equiv 1 \ (\m 4)$.}

\begin{proof}
Here  $\la(-1)= \la(2)=\la(-2)=1$. Hence, $S_1=S_2=S_3=T_1=T_2=T_3=-2$. This time $\frac{q-1}{4}$ is even and so $-1$ is a fourth power and $J(-1)=-2s$.
\end{proof}

{ Small examples of Case 4 include $N_{17} =2$ (since $17=1^2+4^2$), $N_{25}=2$  (since $25=(-3)^2 +4^2$),  $N_{169}= 22$ (since $169=5^2+12^2$), $N_{289}=32$ (since $289= (-15)^2+8^2$).}

\medskip
\noindent
{\bf Case 5:} {\it If $ q=p^r \equiv 1\  (\m 8)$, where $p \equiv 3 \ (\m 4)$, then $q$ is a square and}
\[N_q =\frac{1}{8}\left(q + (-1)^{r/2}\sqrt{q}-3\right). \]
\begin{proof} As in Case 4, each $S_i$ and $T_i$ has the value $-2$. Again $(-1)$ is a fourth power in $\F$ so that, by Lemma \ref{KR}, $J(-1)=-2s=-2(-1)^{r/2}\sqrt{q}$.
\end{proof}

{In Case 5, when $r=2m$ with $m$ odd, then $N_q=\frac{1}{8}(q-2\sqrt{q}-3).$   As can be observed from the formulae in every other case $N_q>\frac{1}{8}(q-2\sqrt{q}-3).$
Thus, Theorem \ref{SDC1} follows  as a corollary from  Cases 1-5.

{\text Small examples of Case 5 include $N_9=0, N_{49}=4, N_{81} =12$}.}

\section {Sums of potents and tripotents in finite fields}\label{ffpotent}

Our original proof of Lemma~\ref {Fq-3n} invoked a deep theorem from \cite{CST15} on the existence of three consecutive primitive elements of a a finite field $\F$.  Recall that a  { \em primitive element} in  $\F$  is a generator of the (cyclic) multiplicative group.

\begin{theorem} [\cite{CST15}, Theorem 1] \label{COT}
Let $q$ be an odd prime power. Then the finite field $\F$ contains three consecutive primitive elements $\gamma-1, \gamma, \gamma+1$ whenever $q>169$. Indeed, the only fields that do not contain three consecutive primitive elements are those for which $q \in \mathcal{S}= \{3,5,7,9,13,25,29,61,81,121,169\}$.
\end{theorem}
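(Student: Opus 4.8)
The plan is to count, via multiplicative character sums, the number $N=N(q)$ of elements $\gamma\in\F\setminus\{0,\pm1\}$ for which $\gamma-1$, $\gamma$, $\gamma+1$ are simultaneously primitive, and to show that $N>0$ whenever $q$ is an odd prime power with $q>169$. (There is nothing to prove for even $q$, since then $\gamma-1=\gamma+1$.) First I would bring in the standard indicator function of primitive elements: for $x\in\F^{\ast}$,
\[ \rho(x)=\frac{\phi(q-1)}{q-1}\sum_{d\mid q-1}\frac{\mu(d)}{\phi(d)}\sum_{\mathrm{ord}(\chi)=d}\chi(x), \]
where the inner sum runs over the $\phi(d)$ multiplicative characters of $\F^{\ast}$ of exact order $d$; then $\rho(x)=1$ if $x$ is primitive and $\rho(x)=0$ otherwise, so that
\[ N=\sum_{\gamma\neq0,\pm1}\rho(\gamma-1)\,\rho(\gamma)\,\rho(\gamma+1). \]

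Next I would expand the three copies of $\rho$, isolating the term in which all three characters are trivial. This term produces the main contribution $\bigl(\phi(q-1)/(q-1)\bigr)^{3}(q-3)+O(1)$, while each remaining term is a bounded M\"obius factor times a sum $\sum_{\gamma}\chi_{1}(\gamma-1)\chi_{2}(\gamma)\chi_{3}(\gamma+1)$ attached to a nontrivial triple $(\chi_{1},\chi_{2},\chi_{3})$. Since the linear polynomials $\gamma-1$, $\gamma$, $\gamma+1$ are pairwise distinct, Weil's bound for multiplicative character sums gives $\bigl|\sum_{\gamma}\chi_{1}(\gamma-1)\chi_{2}(\gamma)\chi_{3}(\gamma+1)\bigr|\le 2\sqrt{q}$ for every such triple. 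Collecting everything, the estimate $q-3>\bigl(W(q-1)^{3}-1\bigr)\bigl(2\sqrt{q}+3\bigr)$ --- where $W(m)$ is the number of squarefree divisors of $m$ --- would already force $N>0$; in crude form, it suffices that $\sqrt{q}>2\,W(q-1)^{3}$.

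Because $W(q-1)$ can be large, this bound is useless for most $q$, and the substantial step is the sieve. I would invoke the prime sieve of Cohen: fix a small set of primes $p_{1}<\cdots<p_{s}$ dividing $q-1$, put $\delta=1-\sum_{i=1}^{s}p_{i}^{-1}$ and $\Delta=\frac{s-1}{\delta}+2$; the sieving inequality then permits the factor $W(q-1)^{3}$ above to be replaced by a quantity of order $2^{3s}\Delta$, which no longer involves the uncontrolled part of $q-1$. This turns ``$N>0$'' into an inequality of the shape $\sqrt{q}>c\cdot 2^{3s}\Delta$ with $c$ absolute; choosing $s$ and the sieving primes suitably in each size and congruence range of $q$, the problem reduces to finitely many prime powers $q\le q_{0}$. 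For those I would finish by explicit computation --- either improving the character-sum bound using the actual factorisation of $q-1$, or simply searching $\F$ for a $\gamma$ that makes $\gamma-1$, $\gamma$, $\gamma+1$ all primitive --- which identifies the exceptional set as $\mathcal S=\{3,5,7,9,13,25,29,61,81,121,169\}$ and, in particular, guarantees the triple for every $q>169$.

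The step I expect to be the main obstacle is calibrating the sieve: the sieving primes must be chosen so that $\delta$ stays bounded away from $0$ while $s$, and hence $2^{3s}$, remains small enough that the threshold $q_{0}$ is within computational reach, and this trade-off has to be managed uniformly over the ranges and congruence classes of $q$, including the genuinely prime-power cases $q=p^{r}$ with $r>1$, where the concluding search is carried out inside $\mathbb{F}_{p^{r}}$ rather than $\Z/p\Z$. By contrast, the character-sum setup and the Weil estimate are routine.
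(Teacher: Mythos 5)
The paper does not prove this statement at all: it is quoted verbatim from \cite{CST15} (and the authors explicitly remark that its proof is intricate both theoretically and computationally, which is why they develop Theorem \ref{SDC1} as a substitute). Your outline --- Vinogradov-type indicator for primitive elements, Weil's bound $2\sqrt{q}$ for the nontrivial character-sum terms over the three distinct linear factors, Cohen's prime sieve to tame $W(q-1)^{3}$, and explicit computation to pin down the exceptional set $\mathcal{S}$ --- is essentially the method of that cited source, and you correctly locate the real burden in the sieve calibration and the finite verification rather than in the character-sum setup.
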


Now, when $q$ is odd, then a primitiive element in $\F$ is a non-square so that Theorem \ref{COT} implies Lemma  \ref {Fq-3n} (for $q>169$). But the proof of Theorem \ref{COT} is rather intricate both theoretically and computationally and it was desirable to provide a simpler wholly theoretical argument which Theorem \ref{SDC1} provides.    On the other hand, Theorem \ref{COT} yields a further strong result on the existence of representations as a sum of potents and tripotents that we include  here athough it is not need for the remainder of the paper.

So, let $q>2$ be a prime power. Call  an element  $a \in \F$ a {\em (proper)  potent} if $a \in C_n$ for some $n$ with $(n-1)|(q-1)$ and $n<q$ and define  the set of all potents of $\F$ as the set
\begin{equation}\label{Cdefn}
C=\bigcup_{\substack { n-1|q-1\\n<q}} C_n.
\end{equation}

For each $n$ such that $(n-1)|(q-1)$, let $c_n$ be the cardinality of $C_n$ and $c$ the cardinality of $C$.  Then $c_n=n$, from which it is not too  surprising that the condition of  Lemma  \ref{Fq-3n} is satisfied by so few pairs $(q,n)$ with $n<q$. On the other hand,
\[ c=q-(q-1)\sum_{l|q-1}\left(1-\frac{1}{\ell}\right),\]
where the sum is over all distinct {\em primes} $\ell$ dividing $q-1$.

For example, if $2042024=1429^2$, then $c=1673621$ so that $c>0.8195q$ which means $C$ is a large subset of $\F$.   This makes it apparently more likely that all members of $\F$ could be sums of potents and tripotents. Nevertheless, Theorem \ref{COT} yields the following striking assertion.

\begin{theorem}\label{SDC2}
Let $q>2$  be a prime power. Then every element of $\F$ is a sum of a potent (i.e., a member of $C$) and a tripotent if, and only if, $q \in \mathcal{S}$ as defined in Theorem \ref{COT}.
\end{theorem}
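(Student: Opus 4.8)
The plan is to reduce Theorem \ref{SDC2} to a statement about \emph{consecutive primitive elements} and then invoke Theorem \ref{COT}. The crucial first step is to identify the complement $\F\setminus C$. As $n$ runs over the integers with $1<n<q$ and $(n-1)\mid(q-1)$, the quantity $n-1$ runs exactly over the \emph{proper} divisors $d$ of $q-1$, and for $d=n-1$ one has $C_n=\{0\}\cup\{\gamma\in\F^{\times}:\gamma^{d}=1\}$; hence by (\ref{Cdefn}), $C=\{0\}\cup\bigcup_{d\mid q-1,\ d<q-1}\{\gamma\in\F^{\times}:\gamma^{d}=1\}$. A nonzero $\gamma$ lies in this union precisely when its multiplicative order divides some proper divisor of $q-1$, equivalently (taking $d=\mathrm{ord}(\gamma)$) when $\mathrm{ord}(\gamma)$ is itself a proper divisor of $q-1$, i.e.\ when $\gamma$ is \emph{not} a primitive element. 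Therefore $\F\setminus C$ is exactly the set of primitive elements of $\F$, a set of size $\varphi(q-1)$.

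Next I would rewrite the hypothesis. Since $0\in C$ and the tripotents form the set $C_3$, which equals $\{0,1,-1\}$ when $q$ is odd and $\{0,1\}$ when $q$ is even, the assertion ``every element of $\F$ is a sum of a potent and a tripotent'' is equivalent to $C\cup(C+1)\cup(C-1)=\F$ for odd $q$, and to $C\cup(C+1)=\F$ for even $q$. Negating these and using the description of $\F\setminus C$: for odd $q$ the hypothesis fails if and only if there is $\gamma\in\F$ with $\gamma-1,\gamma,\gamma+1$ all primitive, and for even $q$ it fails if and only if there is $\gamma$ with both $\gamma$ and $\gamma+1$ primitive. For odd $q$, Theorem \ref{COT} asserts that such a triple of consecutive primitives exists exactly when $q\notin\mathcal S$; hence for odd $q$ the hypothesis of Theorem \ref{SDC2} holds if and only if $q\in\mathcal S$, which is the desired conclusion because every element of $\mathcal S$ is odd.

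It then remains to dispose of even $q$, where the hypothesis must be shown to \emph{fail}, i.e.\ that every field $\F$ of characteristic $2$ with $q>2$ contains two consecutive primitive elements $\gamma,\gamma+1$. For $q=4$ this is clear, since $\mathbb{F}_4=\{0,1,\theta,\theta+1\}$ with $\theta$ and $\theta+1$ primitive, and for $q=8$ every nonzero element other than $1$ is primitive. For the general case I would either invoke the classical theorem on pairs of consecutive primitive elements, or argue directly: writing $\rho$ for the indicator function of primitivity via M\"obius inversion over multiplicative characters, the number of $\gamma\in\F$ with $\gamma$ and $\gamma+1$ both primitive equals $\bigl(\tfrac{\varphi(q-1)}{q-1}\bigr)^{2}\bigl(q+O(\sqrt q)\bigr)$, where the Weil bound for mixed character sums $\sum_{\gamma}\chi_1(\gamma)\chi_2(\gamma+1)$ controls the error (with implied constant depending only on the number of prime divisors of $q-1$); this is positive once $q$ exceeds a small explicit bound, and the finitely many remaining even prime powers are checked by hand. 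Either way such a pair exists for all even $q>2$, completing the proof.

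The main obstacle I anticipate is precisely this even-characteristic case: Theorem \ref{COT} is stated only for odd $q$, so one must supply the separate --- though considerably easier --- input that consecutive primitive elements always occur in even characteristic, including the verification of the small fields. By contrast, the identification $\F\setminus C=\{\text{primitive elements}\}$ and the deduction for odd $q$ from Theorem \ref{COT} are entirely routine; in particular this gives a conceptual re-derivation of the $q>169$ part of Lemma \ref{Fq-3n} (with $n=\tfrac{q+1}{2}$) along the lines originally envisaged.
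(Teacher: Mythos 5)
Your proposal is correct and takes essentially the same route as the paper: identify $\F\setminus C$ with the set of primitive elements, reduce the property to $C\cup(C-1)\cup(C+1)=\F$ (resp.\ $C\cup(C+1)=\F$ in characteristic $2$), and settle the odd case via Theorem \ref{COT}. For the even case the paper simply cites the known existence of consecutive primitive pairs (\cite[Theorem 2.1]{C85}), which is the first of your two suggested options, so your sketched Weil-bound argument is not needed.
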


\begin{proof}
First, suppose $q$ is odd. Then $0, 1 \in C$ and $-1 \in C-1$. Hence the property that every member of $\F$ is the sum of a potent and a tripotent is equivalent to the assertion that
\begin{equation} \label{potenteq}
C \cup (C-1) \cup (C+1) = \F.
\end{equation}
Suppose that $q \not \in \mathcal{S}$ as displayed in Theorem \ref{COT}. Then there exists $\gamma \in \F$ such that each of $\gamma-1, \gamma, \gamma +1$ is a primitive element. Hence, $\gamma$ is not in the left-side of (\ref{potenteq}) and, therefore, (\ref{potenteq}) does not hold.

On the other hand, if $ q \in \mathcal{S}$, then by Theorem \ref{COT}, for any $\gamma (\neq 0, \pm1) \in \F$, we have that $\gamma \in $ (at least one of) $C, C-1, C+1$ and so the relation (\ref{potenteq}) holds.

Finally, suppose $q>2$ is even. Then $0,1 \in$ (both) $C$ and $C+1$ and the fact that every element $\gamma \in \F$ is the sum of a potent and a tripotent (= idempotent) is equivalent to an assertion that $C\cup (C+1)= \F$. But this cannot hold since it was shown already in \cite[Theorem 2.1]{C85} that $\F$ necessarily contains consecutive primitive elements, $\gamma, \gamma +1$.
\end{proof}

\section{Decompositions of matrices into tripotents and potents} \label{main}

In this section we shall explore some special decompositions of matrices into a sum or a difference of a tripotent and a potent, thus augmenting some results from \cite{laa-2021}. Now, Lemma \ref{3q-domain} being established, we have all the ingredients  towards the proof of Theorem \ref{fm}.

\textsc{Proof} of Theorem \ref{fm}. If the ring $R$ is not isomorphic to $\mathbb{F}_{3}$, $\mathbb{F}_{5}$, $\mathbb{F}_{7}$, or $\mathbb{F}_{9}$, then the equivalence (1)-(7) follows from \cite[Theorem 14]{smz-2021} in combination with Lemma~\ref{3q-domain}. If, however,  $R\cong \mathbb{F}_{7}$, then, by  virtue of Lemma~\ref {Fq-3n} and the fact that $q$ is odd, then  any element from $\mathbb{F}_{7}$ is the sum of a tripotent and a $q$-potent.  Hence, $(|\mathbb{F}_{7}|-1) \,|\, q-1$. Therefore, the equivalence of statements (1)-(7) in the case where $R\cong \mathbb{F}_{7}$ is immediate from \cite[Theorem 14]{smz-2021}.
\qed

It was shown in \cite[Theorem 19, Corollary 20]{smz-2021} that, in the matrix ring $\mathbb{M}_{3k}(\mathbb{F}_{3})$, there exists a matrix which cannot be presented as a sum of an idempotent and a tripotent. This example can be extended to the following one.

\begin{proposition}\label{F3-not-pm}
For every natural number $k$, in the ring $\mathbb{M}_{3k}(\mathbb{F}_{3})$ there is a matrix that is not representable neither in the sum nor in the difference of a tripotent and an idempotent.
\end{proposition}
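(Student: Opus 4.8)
The plan is to leverage the known example from \cite[Theorem 19, Corollary 20]{smz-2021}, which produces, for each $k$, a matrix $A \in \mathbb{M}_{3k}(\mathbb{F}_3)$ that is not a sum of an idempotent and a tripotent; I would then show that the same matrix (or a closely related one) also fails to be a \emph{difference} of a tripotent and an idempotent. The first step is to recall that a decomposition $A = E - T$ with $E$ idempotent and $T$ tripotent is equivalent, upon writing $T' = -T$ (which is again tripotent since $(-T)^3 = -T^3 = -T$), to $A = E + T'$ with $E$ idempotent and $T'$ tripotent. Thus over any ring the class of matrices expressible as a difference of a tripotent and an idempotent coincides \emph{exactly} with the class expressible as a sum of an idempotent and a tripotent. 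This observation immediately reduces the ``difference'' case to the ``sum'' case already handled in \cite{smz-2021}.

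More carefully, I would phrase it symmetrically: a matrix $M$ is a sum or a difference of a tripotent and an idempotent iff $M = \pm T + (\pm E)$ for some tripotent $T$ and idempotent $E$. Since $-T$ is a tripotent whenever $T$ is, the sign on $T$ is immaterial; so $M$ is such a sum-or-difference iff $M = T + E$ or $M = T - E$ for a tripotent $T$ and an idempotent $E$. Hence I need a single matrix in $\mathbb{M}_{3k}(\mathbb{F}_3)$ that is neither $T+E$ nor $T-E$. The matrix from \cite{smz-2021} rules out $M = T + E$; I must check it also rules out $M = T - E$, equivalently that $-M$ is not of the form $T + E$. Over $\mathbb{F}_3$ one has $-M = 2M$, so the question becomes whether $2A$ (with $A$ the matrix from \cite{smz-2021}) is a sum of a tripotent and an idempotent. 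The cleanest route is to verify that the obstruction used in \cite{smz-2021} — which I expect is a trace or rank/eigenvalue count modulo $3$ on suitable blocks — is insensitive to the substitution $A \mapsto 2A$, or else to exhibit a matrix symmetric under $M \mapsto -M$ (for instance a scalar-free nilpotent Jordan-type block construction, or the direct sum of the \cite{smz-2021} example with its negative on $6k$-dimensional blocks and then adjusting) so that the single obstruction applies to both signs at once.

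The concrete steps, in order, are: (i) state and prove the reduction $\{T - E\} = \{T + E\}$ using $(-T)^3 = -T$; (ii) recall the precise matrix $A$ and the invariant $\Phi$ used in \cite[Theorem 19, Corollary 20]{smz-2021} to show $A \neq T + E$; (iii) show $\Phi(-A)$ also lies outside the attainable set, i.e. $-A$ is not $T + E$ either — most likely because $\Phi$ is something like the trace taken modulo $3$ over $k$ diagonal $3\times 3$ blocks, and the forbidden residue class is preserved or symmetric under negation; (iv) conclude that $A$ is the desired matrix. The main obstacle is step (iii): I need to know exactly which invariant \cite{smz-2021} uses and confirm it obstructs \emph{both} $A = T+E$ and $-A = T+E$; if by bad luck the invariant favors one sign, the fallback is to replace $A$ by the block sum of the original example and a modified copy so that a common obstruction survives, at the cost of enlarging to $\mathbb{M}_{6k}(\mathbb{F}_3)$ and then padding with a $3\times 3$ block that is simultaneously excluded to restore divisibility by $3k$ — a routine but slightly fiddly bookkeeping exercise. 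Everything else is immediate from the algebraic identity on tripotents and the cited result.
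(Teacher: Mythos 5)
Your reduction in the second paragraph is the right one and is exactly the paper's: $M=T-E$ (tripotent minus idempotent) holds if and only if $-M=E+(-T)$ is a sum of an idempotent and a tripotent, so one must rule out both $B=T+E$ and $-B=T+E$ for a single matrix $B$. But your opening paragraph overclaims: what reduces trivially to the sum case is $E-T$, not the difference $T-E$ that the proposition (and the paper) intends, so the assertion that the class of differences ``coincides exactly'' with the class of sums should be dropped — your own step (iii) implicitly concedes this. The genuine gap is precisely step (iii), which you leave conditional on ``knowing which invariant \cite{smz-2021} uses.'' That invariant is not a trace or rank count: \cite[Theorem 19]{smz-2021} says that any matrix over $\mathbb{F}_3$ whose minimal polynomial is $x^3-x\pm 1$ is not a sum of an idempotent and a tripotent. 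The paper's example $A$ is the companion matrix of $x^3-x-1$ (irreducible over $\mathbb{F}_3$), so $B=A\oplus\cdots\oplus A\in\mathbb{M}_{3k}(\mathbb{F}_3)$ has minimal polynomial $x^3-x-1$, while $-B$ has minimal polynomial $x^3-x+1$; both lie in the forbidden family, and the argument closes. Without this (or some explicit verification that $-A$ is obstructed), your proof is not complete: it reduces the problem correctly but defers the decisive step to an unverified property of a citation.

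Your fallback is also shaky. Obstructions of this kind do not pass block-by-block to direct sums: a decomposition of $A\oplus(-A)$ need not respect the blocks, and its minimal polynomial $(x^3-x-1)(x^3-x+1)$ falls outside the scope of the cited theorem, so you could not invoke it for that matrix. Moreover, ``padding with a $3\times 3$ block that is simultaneously excluded'' presupposes exactly the matrix the proposition demands for $k=1$, which is circular. The lesson from the paper's proof is that the right move is to keep the minimal polynomial of the whole matrix equal to that of the $3\times 3$ seed (which $A\oplus\cdots\oplus A$ does), and to observe that the forbidden family $x^3-x\pm1$ is stable under the substitution $x\mapsto -x$, which is what makes the sum and difference cases fall simultaneously.
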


\begin{proof}
Put
$
A = \left(
\begin{array}{ccc}
0 & 0 & 1\\
1 & 0 & 1\\
0 & 1 & 0
\end{array}
\right) \in M_{3}(\mathbb{F}_{3})
$.
Then, a routine check shows that the matrix $B = \underbrace{A \oplus ... \oplus A}_{k}$ satisfies the desired condition. In fact, with reference to \cite[Theorem 19]{smz-2021}, any matrix with a minimal polynomial $m(x) = x^{3}-x \pm 1$ cannot be presented as a sum of an idempotent and a tripotent. If we assume for a moment that $B = f-e$, where $f^{3}=f$ and $e^{2}=e$, then it is plain that the matrix $(-B)$ is the sum of an idempotent and a tripotent. But the minimal polynomial of $(-B)$ is $x^{3}-x+1$, which contradicts the aforementioned theorem.
\end{proof}

It is also worthwhile noticing that by virtue of \cite[Theorem 14]{smz-2021}, for every $n\in \mathbb{N}$ there is a matrix from $\mathbb{M}_{n}(\mathbb{F}_{5})$ which is not presentable as a sum of an idempotent and a tripotent.  We give an important explicit example in the case in which $n=3$.

\begin{example}
The matrix $
A = \left(
\begin{array}{ccc}
0 & 0 & 1\\
1 & 0 & 1\\
0 & 1 & 0
\end{array}
\right) \in \mathbb{M}_{3}(\mathbb{F}_{5})
$
is neither a sum nor a difference of a tripotent and an idempotent.
\end{example}

\begin{proof}
Put $m(x) = x^{3}-x-1$.

Assume  that $A = f + \varepsilon e$ for some $e^{2} = e$, $f^{3} = f$ and $\varepsilon \in \{-1,1\}$. It is straightforward that $A$, $A-1$ and $A+1$ are not tripotents. Thus, $e \not = 0,1$. Therefore, there exists a unit $C \in \mathbb{M}_{3}(\mathbb{F}_{5})$ such that $C e C^{-1} = I_{k} \oplus (0)_{n-k}$ for some $1 \leq k \leq 2$. Put $A'=CAC^{-1}$ and
$f' = CfC^{-1} =
\begin{pmatrix}
F_{11} & F_{12}\\
F_{21} & F_{22}
\end{pmatrix}
$ with $F_{11} \in M_{k}(\mathbb{F}_{5})$ and $F_{22} \in M_{n-k}(\mathbb{F}_{5})$. We have
$$
A' = f' + \varepsilon (I_{k} \oplus (0)_{n-k}) =
\begin{pmatrix}
F_{11} & F_{12}\\
F_{21} & F_{22}
\end{pmatrix}
+
\varepsilon
\begin{pmatrix}
I_{k} & 0\\
0 & 0
\end{pmatrix}
=
\begin{pmatrix}
F_{11} + \varepsilon I_{k} & F_{12}\\
F_{21} & F_{22}
\end{pmatrix}.
$$

Put
$(f')^{3} =
\begin{pmatrix}
f_{11} & f_{12}\\
f_{21} & f_{22}
\end{pmatrix}
$. It is clear that $f_{ij} = \sum\limits_{a < b} F_{ia} F_{ab} F_{bj}$. Since $m(A') = 0$, we deduce
$$
\begin{pmatrix}
F_{11} + (1+\varepsilon)I_{k} & F_{12}\\
F_{21} & F_{22} + I_{n-k}
\end{pmatrix}
= 1 + A' =
(A')^{3} =
\begin{pmatrix}
g_{11} & g_{12}\\
g_{21} & g_{22}
\end{pmatrix}
$$
for some $g_{ij}$. Taking into account that $(f')^{3} = (f')$, we obtain the following equalities:
\begin{gather*}
F_{11} + (1+\varepsilon)I_{k} = g_{11} = (F_{11} + \varepsilon I_{k})^{3} + (F_{11} + \varepsilon I_{k})F_{12}F_{21} + F_{12}F_{21}(F_{11} + \varepsilon I_{k}) + F_{12}F_{22}F_{21} =\\= \left(\sum\limits_{a < b} F_{1a} F_{ab} F_{b1}\right)  + 3F_{11} +  \varepsilon (3 F_{11}^{2} + I_{k} + 2 F_{12} F_{21}) = 4F_{11} + \varepsilon (3 F_{11}^{2} +  I_{k} + 2 F_{12} F_{21}),
\\
F_{12} = g_{12} = (F_{11} + \varepsilon I_{k})^{2}F_{12} + (F_{11} + \varepsilon I_{k})F_{12}F_{22} + F_{12}F_{21}F_{12} + F_{12}F_{22}F^{2} =\\= \left(\sum\limits_{a < b} F_{1a} F_{ab} F_{b2}\right) + \varepsilon (2F_{11} F_{12} + \varepsilon F_{12} + F_{12} F_{22}) = F_{12} + \varepsilon ((2F_{11} + \varepsilon I_{k})F_{12} + F_{12} F_{22}),
\\
F_{21} = g_{21} = F_{21}(F_{11} + \varepsilon I_{k})^{2} + F_{21}F_{12}F_{21} + F_{22}F_{21}(F_{11} + \varepsilon I_{k}) + F_{22}^{2}F_{21} =\\= \left(\sum\limits_{a < b} F_{2a} F_{ab} F_{b1}\right) + \varepsilon (F_{21}(2F_{11} + \varepsilon I_{k}) + F_{22}F_{21}) = F_{21} + \varepsilon (F_{21}(2F_{11} + \varepsilon I_{k}) + F_{22}F_{21}),
\\
F_{22} + I_{n-k} = g_{22} = F_{21}(F_{11} + \varepsilon I_{k})F_{12} + F_{21}F_{12}F_{22} + F_{22}F_{21}F_{12} + F_{22}^{3} =\\= \left(\sum\limits_{a < b} F_{2a} F_{ab} F_{b2}\right) + \varepsilon F_{21}F_{12} = F_{22} + \varepsilon F_{21}F_{12}.
\end{gather*}

This yields a system of equations of the form
$$
\left\{
\begin{array}{l}
F_{11}^{2} + \varepsilon F_{11} = 2 \varepsilon I_{k} + F_{12}F_{21}\\
(2F_{11}+ \varepsilon I_{k})F_{12} = -F_{12}F_{22}\\
F_{21}(2F_{11} + \varepsilon I_{k}) = - F_{22}F_{21}\\
F_{21}F_{12} = \varepsilon I_{n-k}
\end{array}
\right.
$$

It follows that
\begin{gather*}
F_{22}^{2} = (-F_{22}F_{21})(-F_{12}F_{22}) = F_{21}(2F_{11} + \varepsilon I_{k})^{2} F_{12} = F_{21} \left(-(F_{11}^{2} + \varepsilon F_{11}) + I_{k}\right)F_{12} =\\= F_{21} ( (1-2\varepsilon) I_{k} - F_{12}F_{21})F_{12} = (1-2\varepsilon) F_{21}F_{12} - (F_{21}F_{12})^{2} = -2\varepsilon I_{n-k}.
\end{gather*}

However, trivially neither $2$ nor $3$ is a square in $\mathbb{F}_{5}$. Thus, it must be that $n-k$ is even and so $k=1$. Hence, in this case, the ranks of $F_{21}$ and $F_{12}$ do not exceed $1$, whereas $F_{21}F_{12} = I_{2}$, a contradiction.
\end{proof}

\section{Applications to (Weakly, Strongly) $n$-Torsion Clean Rings}\label{appl}

Here we apply the results from the previous section to variations of $n$-torsion cleanness. In particular, we shall incorporate the proofs of each of Theorems \ref{t2a}---\ref{th-wtcd-eq} at appropriate stages of the discussion.

To start with, however, we give some technical material.

\begin{proposition}\label{1} If $R$ is a commutative ring of even characteristic at most $4$ (that is, $4$ annihilates the identity element of $R$), then $R$ is weakly $2^n$-torsion clean if, and only if, $R$ is $2^n$-torsion clean.
\end{proposition}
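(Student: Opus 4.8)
The plan is to recast the biconditional as a statement about a single set of admissible exponents. Given a commutative ring $R$, let $M_{\mathrm c}$ be the set of integers $m\in\mathbb{N}$ for which every element of $R$ is a sum of an idempotent and an $m$-torsion unit, and let $M_{\mathrm w}$ be the set of $m$ for which every element of $R$ admits a weakly $m$-torsion clean decomposition in the sense of Definition~\ref{tors}. An ordinary decomposition $r=e+u$ is in particular a weakly $m$-torsion clean one (with the ``$+$'' sign), so $M_{\mathrm c}\subseteq M_{\mathrm w}$ for every ring; and, unwinding Definition~\ref{tors}, the assertion ``$R$ is weakly $2^n$-torsion clean'' reads $\min M_{\mathrm w}=2^n$, whereas ``$R$ is $2^n$-torsion clean'' reads $\min M_{\mathrm c}=2^n$. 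Thus it is enough to prove $M_{\mathrm c}=M_{\mathrm w}$. When $2\cdot 1_R=0$ this is immediate, because then $-e=e$ for every idempotent $e$ and the two notions of decomposition coincide literally; so I would henceforth assume $R$ has characteristic $4$, i.e.\ $2\cdot 1_R\ne 0$ and $4\cdot 1_R=0$.

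The first step is a sign-switching lemma: if $r=u-e$ with $e^2=e$, $u$ a unit, $u^m=1$ and $m$ \emph{even}, then $r=(u-2e)+e$ is an ordinary $m$-torsion clean decomposition. This is a short binomial computation (valid since $R$ is commutative): in $(u-2e)^m$ every term carrying $(-2)^j$ with $j\ge 2$ is $0$ because $4\cdot 1_R=0$, and $e^j=e$ for $j\ge 1$, so $(u-2e)^m=u^m-2m\,u^{m-1}e=1-2m\,u^{m-1}e=1$, the last equality because $2m\cdot 1_R=0$ for even $m$. Combined with the trivial ``$+$'' case, this gives, for every even $m$, the equivalence $m\in M_{\mathrm w}\iff m\in M_{\mathrm c}$.

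The second step---which I expect to be the delicate one---is to rule out odd exponents, by showing that the element $-1_R$ (which is distinct from $1_R$ because the characteristic is $4$) has \emph{no} weakly $k$-torsion clean decomposition for odd $k$. If $-1=u+e$, then $u=-1-e=-(1+e)$ and, using $e^j=e$ together with $2^k\cdot 1_R=0$ for $k\ge 2$, one computes $u^k=e-1$ for odd $k\ge 3$ and $u^k=-1-e$ for $k=1$; if $-1=u-e$, then $u=-(1-e)$ with $1-e$ idempotent, hence $u^k=-(1-e)=e-1$ for every odd $k$. In each case $u^k=1$ forces $e=2\cdot 1_R$, which cannot be idempotent since $(2\cdot 1_R)^2=4\cdot 1_R=0\ne 2\cdot 1_R$. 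Hence $M_{\mathrm w}$---and therefore $M_{\mathrm c}\subseteq M_{\mathrm w}$---contains no odd integer. The only point requiring real care here is the verification that the forced value $e=2\cdot 1_R$ is genuinely impossible; this is exactly where the hypothesis ``characteristic $4$'' (rather than $2$) is used.

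Putting the two steps together, $M_{\mathrm c}$ and $M_{\mathrm w}$ are both contained in the even integers and agree on them, so $M_{\mathrm c}=M_{\mathrm w}$, whence $\min M_{\mathrm c}=\min M_{\mathrm w}$; in particular, $R$ is weakly $2^n$-torsion clean if and only if $R$ is $2^n$-torsion clean. (The degenerate case where these minima do not exist---i.e.\ $M_{\mathrm w}=\emptyset$, hence $M_{\mathrm c}=\emptyset$---causes no problem, as both statements are then false, and the characteristic-$2$ case was disposed of at the start.)
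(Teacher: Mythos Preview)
Your argument is correct and shares its core with the paper's: both pass from $r=u-e$ to $r=(u-2e)+e$ and use $4\cdot 1_R=0$ to control the torsion of $u-2e$. The paper does this via the single identity $(u-2e)^2=u^2$, which immediately yields $(u-2e)^m=u^m$ for every even $m$; this is marginally slicker than your binomial expansion but equivalent.

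Where you go beyond the paper is in treating the minimality clause of Definition~\ref{tors} explicitly. The paper only establishes that every element admits a weakly $2^n$-torsion clean decomposition if and only if it admits an ordinary one, and labels the remaining direction ``elementary''. With minimality, however, one must also exclude the possibility that $\min M_{\mathrm w}$ is an \emph{odd} integer strictly below $\min M_{\mathrm c}$, and the sign-switching lemma alone does not do this. Your second step---showing that $-1_R$ has no weakly $k$-torsion clean decomposition for odd $k$ when the characteristic is exactly $4$---supplies exactly this missing piece, and the resulting equality $M_{\mathrm c}=M_{\mathrm w}$ makes the transfer of minima automatic. So your route follows the same idea but is more complete on the bookkeeping the paper leaves implicit.
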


\begin{proof} One direction is elementary; so we concentrate on the other. If $r=u+e$, we are done, so let us assume that $r=u-e$. Thus, one easily checks that $r=(u-2e)+e$, where $(u-2e)^2=u^2$ which gives that $(u-2e)^{2^n}=u^{2^n}$ for all $n\in \N$, as required.
\end{proof}

\begin{lemma}\label{eq} Suppose that $R$ is a ring and the element $a\in R$ possesses weakly $n$-torsion clean decomposition with the strong property. Then the equality $(a^n-1)((a\pm 1)^n-1)=0$ holds.
\end{lemma}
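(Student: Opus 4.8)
The plan is to unwind the definitions directly. By hypothesis, $a = u + \varepsilon e$ for some $\varepsilon \in \{1,-1\}$, with $e^2 = e$, $u^n = 1$, and $eu = ue$. First I would record that since $e$ and $u$ commute, so do $e$ and $a$, and hence all the elements in sight ($a$, $e$, $u$, $1$) lie in a commutative subring of $R$; this lets me manipulate them freely without worrying about ordering.

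Next I would split on the two complementary idempotents $e$ and $1-e$, exploiting that $e(1-e) = 0$ and $e + (1-e) = 1$. Multiplying $a = u + \varepsilon e$ by $1 - e$ gives $a(1-e) = u(1-e)$, so $(a^n - 1)(1-e) = (u^n - 1)(1-e) = 0$. Multiplying instead by $e$ gives $ae = ue + \varepsilon e = (u + \varepsilon)e$, hence $(a - \varepsilon)e = ue$, and therefore $((a-\varepsilon)^n - 1)e = (u^n - 1)e = 0$. Since $\varepsilon = \pm 1$, the factor $(a-\varepsilon)^n - 1$ is one of $(a-1)^n - 1$ or $(a+1)^n - 1$; in either case it equals $\pm\big((a\mp 1)^n - 1\big)$ up to adjusting which sign we picked, and in any event the product $\big((a-1)^n-1\big)\big((a+1)^n-1\big)$ annihilates $e$.

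Finally I would combine the two pieces: the element $(a^n - 1)\big((a-1)^n - 1\big)\big((a+1)^n - 1\big)$ is killed by $1-e$ (because $(a^n-1)(1-e) = 0$) and also killed by $e$ (because one of the two shifted factors kills $e$), so it is killed by $e + (1-e) = 1$, i.e. it is zero. But actually the statement as written is the sharper claim $(a^n - 1)\big((a\pm 1)^n - 1\big) = 0$, interpreted for the specific sign matching $\varepsilon$: when $a = u + e$ we get $(a-1)e = ue$ so $(a^n-1)(a-1)^n - \text{stuff}$... more precisely $(a^n-1)$ kills $1-e$ and $((a-1)^n - 1)$ kills $e$, giving $(a^n-1)((a-1)^n-1) = 0$; symmetrically $a = u - e$ yields $(a^n-1)((a+1)^n-1) = 0$. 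Either way one of the two displayed products vanishes, which is what "$(a^n-1)((a\pm 1)^n - 1) = 0$" asserts.

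I do not expect a genuine obstacle here; the only mild subtlety is bookkeeping the sign $\varepsilon$ correctly so that the conclusion is stated as an inclusive "$\pm$" rather than claiming both products vanish simultaneously. The commutativity of $e$ and $u$ is doing all the real work — without the strong property one could not pass from $a = u + \varepsilon e$ to the clean factorization through the orthogonal idempotents $e, 1-e$, so I would make sure to invoke that hypothesis explicitly at the start.
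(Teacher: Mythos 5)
Your proof is correct and follows essentially the same route as the paper: commutativity (the strong property) lets you split along the idempotents $e$ and $1-e$, giving $(a^n-1)(1-e)=0$ and $((a-\varepsilon)^n-1)e=0$, whose product then vanishes. The paper handles the two signs separately (citing an earlier argument for $a=u+e$ and manipulating explicitly for $a=u-e$), but the underlying mechanism is identical to yours.
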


\begin{proof} Assuming first that $a=v+e$ is the desired weakly $n$-torsion clean decomposition of $a$ satisfying $ve=ev$, we derive as in \cite{DMa} that the equation $(a^n-1)((a-1)^n-1)=0$ is valid.

So, assume now that $a=v-e$, where $v^n=1$, $e^2=e$ and $ve=ev$. Hence $ve=(a+1)e$, so that $(ve)^n=((a+1)e)^n=(a+1)^ne$. But $a^n-1=(a^n-1)e$, that is, $(a^n-1)(1-e)=0$ whence $(a+1)^ne=e=a^ne-a^n+1$. By simple  manipulations, we deduce   in turn that $1=(a+1)^n-a^ne+a^n$, whence  $a^n-1=-(a+1)^n+a^ne$ , whence $(a^n-1)e=a^ne-(a+1)^ne$, whence  $a^n-1=(a^n-(a+1)^n)e$. Consequently, $(a^n-1)e=(a^n-(a+1)^n)e$ and so $(a+1)^ne-e=((a+1)^n-1)e=0$. Finally, $(a^n-1)((a+1)^n-1)=(a^n-1)e((a+1)^n-1)=(a^n-1)((a+1)^n-1)e=0$, as stated
\end{proof}

\begin{proposition}\label{2} Let $F$ be a field not isomorphic to any of the fields $\mathbb{F}_{3}$, $\mathbb{F}_{5}$ or $\mathbb{F}_{9}$. Then $F$ is weakly $n$-torsion clean if, and only if, $F$ is finite and $n=|F|-1$.
\end{proposition}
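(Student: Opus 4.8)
The plan is to identify "$F$ is weakly $n$-torsion clean" with the statement that every element of $F$ is a sum or difference of an $n$-torsion unit and an idempotent, then exploit that $F$ is a field. Since $F$ is a field, its only idempotents are $0$ and $1$, so a weakly $n$-torsion clean decomposition of $a \in F$ says that one of $a$, $a-1$, $a+1$ is an $n$-torsion unit; equivalently, writing $U_n$ for the set of $n$-torsion units together with $0$ (note $0$ is never a unit, but in a field the only way the idempotent part "vanishes" is $a = u$ itself, so I should be slightly careful: $a$ is either a unit with $a^n = 1$, or $a \mp 1$ is such a unit), one of $a, a\pm1$ lies in the set $C$ of elements satisfying $x^n = 1$ together with $0$. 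One direction is easy: if $F$ is finite with $|F| = n+1$, then every nonzero element satisfies $x^{|F|-1} = x^n = 1$, so every element is itself ($0$-plus or) an $n$-torsion unit, and minimality of $n$ follows because a primitive element has multiplicative order exactly $n = |F|-1$; so $F$ is weakly $n$-torsion clean (indeed $n$-torsion clean) with this $n$ and no smaller one.

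For the substantive direction, suppose $F$ is weakly $n$-torsion clean. First I would argue that $F$ must be finite: the $n$-torsion units form a finite set (at most $n$ elements, being roots of $x^n - 1$), hence $C$ is finite, and the condition forces $F = C \cup (C-1) \cup (C+1)$, which has cardinality at most $3n$; an infinite field cannot be covered this way, so $|F| = q$ is a finite prime power. If $q$ is even, then $C_3 = C_2 = \{0,1\}$ and the covering condition $C \cup (C+1) = F$ together with the cardinality bound and a comparison with the cyclic group structure (or directly with the analogue of Lemma \ref{Fq-3n} / the even case of Theorem \ref{SDC2}) forces $q$ to be small, and one checks the only surviving possibility gives $n = q-1$; but actually the cleanest route is: in any field, if every element is a sum or difference of an $n$-torsion unit and an idempotent, and the idempotents are only $0,1$, then $F^\times \setminus (\text{translates})$ must be empty, so $F^\times = C^\times \cup (C^\times \pm 1)\cup\{\dots\}$, and since $C^\times$ is the subgroup of $n$-th roots of unity of order $d = \gcd(n, q-1)$, we get $q \le 3d + c$ for a small constant $c$.

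The key step — and the main obstacle — is ruling out the "proper" case where $d = \gcd(n, q-1)$ is a proper divisor of $q-1$, i.e. showing the translates $C, C\pm1$ cannot cover $F$ unless $C = F$ (forcing $n$ a multiple of $q-1$, and then minimality forces $n = q-1$). When $d = (q-1)/2$, $C$ is exactly the set of squares (with $0$), and the obstruction is precisely the existence of a non-square $\gamma$ with $\gamma \pm 1$ also non-squares — this is where Theorem \ref{SDC1} enters and excludes all $q > 9$; the small cases $q \in \{3,5,7,9\}$ are why $\mathbb{F}_3, \mathbb{F}_5, \mathbb{F}_9$ are excluded from the hypothesis, while $\mathbb{F}_7$ survives because for $q = 7$ one has $d = 3$ not $6$ and must check directly (or invoke that $6 \mid n$ is still forced by minimality considerations). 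When $d < (q-1)/2$ is a smaller proper divisor, $C$ has at most $(q-1)/3 + 1$ elements, so $C \cup (C\pm1)$ has fewer than $q$ elements and cannot cover $F$ — this mirrors the cardinality argument in the proof of Lemma \ref{Fq-3n}. Assembling these cases: the covering condition forces $C = F$, hence $(q-1) \mid n$; and then minimality of $n$ in Definition \ref{tors}, combined with the fact that $F$ is already $(q-1)$-torsion clean (every nonzero element is a $(q-1)$-torsion unit), pins $n = q-1 = |F|-1$. I would also remark that the exclusion of $\mathbb{F}_3, \mathbb{F}_5, \mathbb{F}_9$ is genuinely necessary here, referring forward to the "surprising facts" noted after Theorem \ref{th-wtcd-eq} that $\mathbb{Z}_3$ is weakly $1$-torsion clean and $\mathbb{Z}_5$ is weakly $2$-torsion clean, i.e. with $n$ strictly smaller than $|F|-1$.
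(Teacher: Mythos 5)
Your overall route is essentially the paper's: in a field the idempotents are only $0,1$, so a weakly $n$-torsion clean decomposition of $a$ means one of $a$, $a\pm 1$ is an $n$-torsion unit; one then reduces to $d=\gcd(n,|F|-1)$, rules out the proper-divisor cases by the cardinality count when $d<\frac{q-1}{2}$ and by Theorem \ref{SDC1} (equivalently Lemma \ref{Fq-3n}) when $d=\frac{q-1}{2}$, checks $\mathbb{F}_7$ by hand, and finishes with the minimality clause of Definition \ref{tors}. The only genuine difference from the paper is the finiteness step: the paper deduces finiteness from Lemma \ref{eq} (the identity $(a^n-1)((a\pm1)^n-1)=0$), whereas you cover $F$ by the at most $3n$ elements of $C\cup(C-1)\cup(C+1)$; both work, and yours is if anything more direct.

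There is, however, one step that fails as written: in the ``easy'' direction you claim that minimality of $n=|F|-1$ ``follows because a primitive element has multiplicative order exactly $|F|-1$.'' That argument is insufficient, because a weakly $m$-torsion clean decomposition of a primitive element $\gamma$ only requires one of $\gamma,\gamma\pm1$ to be an $m$-torsion unit, not $\gamma$ itself. Concretely, in $\mathbb{F}_7$ the primitive element $3$ satisfies $3=2+1$ with $2^3=1$, and in $\mathbb{F}_5$ one has $3=4-1$ with $4^2=1$; indeed $\mathbb{F}_5$ is weakly $2$-torsion clean even though it contains elements of order $4$, which is exactly why it is excluded from the statement. So minimality cannot be read off from the order of a primitive element; it must come from showing that no $m<|F|-1$ admits decompositions for all elements, i.e.\ from the same covering analysis (Lemma \ref{Fq-3n}/Theorem \ref{SDC1} for $d\le\frac{q-1}{2}$, plus the direct verification that $6\notin\{1,2,4\}+\{-1,0,1\}$ in $\mathbb{F}_7$). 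Your forward-direction argument already contains this machinery, so the fix is simply to invoke it for every hypothetical smaller $m$ instead of the primitive-element remark; with that repair (and tightening the slightly muddled even-$q$ sentence, where the cardinality bound $2d\le\frac{2(q-1)}{3}<q$ directly forces $(q-1)\mid n$ rather than ``$q$ small''), the proposal matches the paper's proof.
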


\begin{proof} Let $F$ be a weakly $n$-torsion clean field. Since $F$ contains only the trivial idempotents $0$ and $1$, it is clear by Lemma~\ref{eq}  that $F$ is finite. Moreover, every element of $F$ is the sum of a $(n+1)$-potent and a tripotent. By Lemma~\ref{Fq-3n}, either $(|F|-1) \mid n$ or $|F| \in \{3,5,7,9\}$. But it is  easily be checked that each element of a finite field $F$ has a weakly $(|F|-1)$-torsion clean decomposition. Thus, it is enough to consider only the cases of $\mathbb{F}_{3}$, $\mathbb{F}_{5}$, $\mathbb{F}_{7}$ and $\mathbb{F}_{9}$.

So, a direct calculation justifies each of the following assertions.

(1) $\mathbb{F}_{3}$ is weakly $1$-torsion clean.

(2) $\mathbb{F}_{5}$ is weakly $2$-torsion clean.

(3) $\mathbb{F}_{7}$ is not a weakly $3$-torsion clean, because $6\in \mathbb{F}_{7}$ cannot be represented as the sum of elements from the sets $\{1,2,4\}$ and $\{-1,0,1\}$. Therefore, $\mathbb{F}_{7}$ is $6$-torsion clean and the desired equality holds.

(4) Expressing $\mathbb{F}_{9}$ as $ \mathbb{F}_{3}[x] / (x^{2} + x + 2)$, we write $\xi$ for the image of $x$. A direct inspection shows that the invertible $4$-potents of $\mathbb{F}_{9}$ are $1$, $2$, $2 \xi + 1$ and $\xi + 2$, whence it is clear that $\mathbb{F}_{9}$ is weakly $4$-torsion clean.

Conversely, by analogous manipulations as above, it is obvious that every element of a finite field $F$ has a weakly $(|F|-1)$-torsion clean decomposition, as required.
\end{proof}

The following assertion is  also useful. Its proof is a slight version of that from \cite{DMa}, so we omit the details leaving them to the interested reader.

\begin{lemma}\label{nil} Let $n\in  \mathbb{N}$ and let $R$ be a ring satisfying the identity $(x^n-1)((x\pm 1)^n-1)=0$. Then the following two points hold.
 \begin{enumerate}
   \item  $R$ has finite non-zero characteristic;
   \item  J(R) is a nil-ideal.
\end{enumerate}
\end{lemma}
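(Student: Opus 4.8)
We must show that if $R$ is a ring satisfying the identity $(x^n-1)((x\pm 1)^n-1)=0$ (meaning both $(x^n-1)((x+1)^n-1)=0$ and $(x^n-1)((x-1)^n-1)=0$ hold identically), then (1) $R$ has finite non-zero characteristic and (2) $J(R)$ is a nil ideal.

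\emph{Plan of proof of Lemma~\ref{nil}.} The plan is to deduce (1) from the fact that $\mathbb{Z}$ cannot embed in $R$, then to use (1) to show that $R$ is a periodic ring, and finally to read off (2) from periodicity by a one-line argument involving units of the form $1-a^{k}$ with $a\in J(R)$. The only substantive step is the passage to periodicity; everything else is bookkeeping.

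First I would record what the hypothesis gives us in uniform form: every element of $R$ is a root of the monic integer polynomial $g(X)=(X^{n}-1)\big((X+1)^{n}-1\big)\in\mathbb{Z}[X]$, which has degree $2n$ (its leading coefficient is $1\cdot 1$, and $(X+1)^{n}-1$ is itself monic of degree $n$ since its constant term vanishes). If one reads the hypothesis in the weaker way, namely that each element of $R$ annihilates at least one of $(X^{n}-1)((X+1)^{n}-1)$ and $(X^{n}-1)((X-1)^{n}-1)$, then one simply replaces $g$ by the product of these two polynomials, which is still monic (of degree $4n$); note that if $r$ kills one factor then $r$ kills the product. For part (1) I would assume $R\neq 0$ (the zero ring has characteristic $1$, vacuously), and evaluate $g$ at $r=N\cdot 1_{R}$ for a large positive integer $N$: since $g(N)=(N^{n}-1)((N+1)^{n}-1)$ is a strictly positive integer for $N\ge 2$, injectivity of the canonical map $\mathbb{Z}\to R$ would force $g(N\cdot 1_{R})\neq 0$, contradicting $g\equiv 0$ on $R$. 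Hence $\ker(\mathbb{Z}\to R)=m\mathbb{Z}$ for some integer $m\ge 1$, which is precisely the statement that $R$ has finite nonzero characteristic.

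For part (2), fix $x\in R$. From $g(x)=0$ with $g$ monic of degree $d$ (where $d=2n$ or $4n$) we get $x^{d}\in\mathbb{Z}\text{-span}\{1,x,\dots,x^{d-1}\}$, so the commutative subring $\mathbb{Z}[x]$ is generated as a $\mathbb{Z}$-module by $1,x,\dots,x^{d-1}$; since $\mathbb{Z}$ acts on it through the finite ring $\mathbb{Z}/m\mathbb{Z}$ by part (1), $\mathbb{Z}[x]$ is a finite ring. In particular the powers of $x$ are not all distinct, so $x^{i}=x^{j}$ for some $i>j\ge 1$; that is, $R$ is periodic. Now take any $a\in J(R)$, pick such $i>j$, and rewrite $a^{i}=a^{j}$ as $a^{j}(a^{i-j}-1)=0$. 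Since $i-j\ge 1$ we have $a^{i-j}\in J(R)$, so $1-a^{i-j}$ — hence also $a^{i-j}-1$ — is a unit of $R$; multiplying $a^{j}(a^{i-j}-1)=0$ on the right by its inverse yields $a^{j}=0$. Thus every element of $J(R)$ is nilpotent, i.e.\ $J(R)$ is a nil ideal.

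I do not anticipate a genuine obstacle: the heart of the matter is the classical observation (in the circle of Herstein's and Chacron's theorems) that a ring which is algebraic of bounded degree over a finite prime ring is periodic, after which nilness of the Jacobson radical is immediate. The points that merely require a line of care are the trivial-ring edge case and pinning down the intended reading of the ``$\pm$'' in the hypothesis; as indicated above, both are absorbed once one chooses the right monic integer polynomial satisfied by all elements of $R$.
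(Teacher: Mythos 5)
Your proof is correct. The paper itself omits the argument for Lemma~\ref{nil}, deferring to \cite{DMa}, and your write-up supplies exactly the omitted details along the standard route: evaluating the monic integer identity at an integer multiple of $1_R$ gives finite non-zero characteristic, bounded integrality over $\mathbb{Z}/m\mathbb{Z}$ makes each $\mathbb{Z}[x]$ finite and hence $R$ periodic, and then $a^j(a^{i-j}-1)=0$ with $1-a^{i-j}\in U(R)$ forces every $a\in J(R)$ to be nilpotent. The only point needing a word of care (which your ``large $N$'' already covers) is that under the weaker reading, where one uses the product polynomial of degree $4n$, the value at $N=2$ vanishes because of the factor $(N-1)^n-1$, so one should take $N\ge 3$.
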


Now we are in a position to establish the following theorem.

\begin{theorem}\label{strong} Let $n\in \N$. Suppose $R$ is a weakly $n$-torsion clean ring having the strong property. Then the following assertions hold.

\begin{enumerate}
\item  $R$ is a PI-ring satisfying the polynomial identity $(x^n-1)((x\pm 1)^n-1)=0  $.
\item $R$ has finite non-zero characteristic.
\item $J(R)$ is a nil-ideal.
\end{enumerate}

\end{theorem}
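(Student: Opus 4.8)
The plan is to derive Theorem \ref{strong} as an almost immediate consequence of Lemma \ref{eq} and Lemma \ref{nil}, with one small bookkeeping step in between to legitimize the word ``PI-ring''. First I would unwind Definition \ref{tors}: if $R$ is weakly $n$-torsion clean with the strong property, then \emph{every} element $a\in R$ admits a weakly $n$-torsion clean decomposition with the strong property, that is, $a=v\pm e$ with $v^n=1$, $e^2=e$ and $ve=ev$. Feeding each such $a$ into Lemma \ref{eq} yields $(a^n-1)((a\pm 1)^n-1)=0$ for every $a\in R$; this is exactly the identity asserted in part (1), and it is also precisely the hypothesis of Lemma \ref{nil}.

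Next I would argue that $R$ is in fact a PI-ring. The key observation is that, for a fixed $a$, the three elements $a^n-1$, $(a+1)^n-1$ and $(a-1)^n-1$ are all polynomial expressions in the single element $a$, hence commute pairwise. Therefore, whichever of the equalities $(a^n-1)((a+1)^n-1)=0$ or $(a^n-1)((a-1)^n-1)=0$ is the one guaranteed by Lemma \ref{eq} for that particular $a$ (the ``$+$'' decomposition giving the ``$-1$'' relation and vice versa), multiplying it by the remaining factor produces
\[
(a^n-1)\,((a+1)^n-1)\,((a-1)^n-1)=0
\]
for \emph{all} $a\in R$. Since the polynomial $f(x)=(x^n-1)((x+1)^n-1)((x-1)^n-1)$ (with integer coefficients) is nonzero, this shows $R$ satisfies a nontrivial polynomial identity, so $R$ is a PI-ring; together with the identity obtained in the first step, this completes part (1).

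Parts (2) and (3) then require no further work: having shown in the first step that $R$ satisfies the identity $(x^n-1)((x\pm 1)^n-1)=0$, I would simply invoke Lemma \ref{nil} verbatim to conclude that $R$ has finite non-zero characteristic and that $J(R)$ is a nil-ideal.

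Since all the substantive content is already packaged in Lemmas \ref{eq} and \ref{nil}, I do not anticipate a genuine obstacle here; the one point to handle with care is the meaning of the symbol ``$\pm$'' — it abbreviates ``for each element one of the two signs works'', not ``both signs work for every element'' — so the small commutativity remark above is exactly what is needed to upgrade this disjunction of two conditions to the honest polynomial identity that underlies the PI-ring claim.
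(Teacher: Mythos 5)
Your proposal is correct and follows exactly the paper's route: the paper proves Theorem \ref{strong} simply by combining Lemma \ref{eq} (applied to every element, giving the identity in part (1)) with Lemma \ref{nil} (giving parts (2) and (3)). Your extra remark about upgrading the sign-dependent disjunction to the genuine three-factor identity $(x^n-1)((x+1)^n-1)((x-1)^n-1)=0$ via commutativity is a sensible clarification of what the paper leaves implicit, but it is not a different argument.
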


\begin{proof}
The claim follows at once by combination of Lemmas~\ref{eq} and \ref{nil}.
\end{proof}

Subsuming the assertions alluded to above along with the methods developed in \cite{DMa}, we now arrive at our central statement.

\begin{theorem}\label{abelian} For a ring $R$, the following two conditions are equivalent.

\begin{enumerate}
\item There exists $n\in \mathbb{N}$ such that $R$ is a weakly $n$-torsion clean abelian ring.
\item The ring $R$ is abelian weakly clean such that $U(R)$ is of finite exponent.
\end{enumerate}

\end{theorem}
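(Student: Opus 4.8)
The plan is to prove the two implications separately. The implication (2) $\Rightarrow$ (1) is essentially immediate: if $R$ is abelian and weakly clean with $U(R)$ of finite exponent $N$, then any $r\in R$ can be written $r=u+e$ or $r=u-e$ with $u\in U(R)$ and $e\in \mathrm{Id}(R)$, and automatically $u^{N}=1$; hence every element of $R$ has a weakly $N$-torsion clean decomposition, and taking $n$ to be the least integer with this property (so $n\le N$) displays $R$ as a weakly $n$-torsion clean abelian ring.

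For (1) $\Rightarrow$ (2), suppose $R$ is weakly $n$-torsion clean and abelian. Since every idempotent is central, each weakly $n$-torsion clean decomposition has the strong property, so Theorem~\ref{strong} applies: $R$ is a PI-ring, it has finite non-zero characteristic $c$, and $J(R)$ is nil. That $R$ is weakly clean is clear (discard the condition $u^{n}=1$), so the whole point is to show that $U(R)$ has finite exponent. Because $J(R)$ is nil, idempotents lift modulo $J(R)$, so $\bar R:=R/J(R)$ is again abelian; it is semiprimitive, it inherits the polynomial identity $g(x):=(x^{n}-1)\big((x-1)^{n}-1\big)\big((x+1)^{n}-1\big)=0$ supplied by Lemma~\ref{eq} and Theorem~\ref{strong}, and every element of $\bar R$ is still a sum or difference of an idempotent and an $n$-torsion unit. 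I would bound the exponent of $U(R)$ in two stages: first bound the exponent of $U(\bar R)$, then bound the order of the unipotent units lying in $1+J(R)$, and finally combine.

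For the first bound, write $\bar R$ as a subdirect product of primitive PI images $\mathbb{M}_{k_{i}}(D_{i})$; the standard structure theory of semiprimitive PI rings bounds $k_{i}$ and $\dim_{Z(D_{i})}D_{i}$ by a function of $\deg g=3n$, and each $D_{i}$ has prime characteristic $p_{i}\mid c$. A unit $a$ of $\mathbb{M}_{k_{i}}(D_{i})$ satisfies $g(a)=0$, so after extending scalars to $\overline{\mathbb{F}_{p_{i}}}$ its eigenvalues are roots of $g$; being algebraic over $\mathbb{F}_{p_{i}}$ of degree at most $3n$, each such eigenvalue is a root of unity of order dividing $\mathrm{lcm}_{1\le e\le 3n}(p_{i}^{e}-1)$, while the unipotent part of $a$ has $p_{i}$-power order bounded in terms of $k_{i}$ and $\dim D_{i}$; hence the exponent of $U(\bar R)$ divides some $N_{0}=N_{0}(n,c)$. (Alternatively, the abelian hypothesis together with Proposition~\ref{2} and Lemma~\ref{3q-domain} forces each $\mathbb{M}_{k_{i}}(D_{i})$ to be a finite field of order at most $\max(9,n+1)$.) For the second bound, given $u\in U(R)$ we have $u^{N_{0}}=1+j$ with $j\in J(R)$; applying the weakly $n$-torsion clean property to $1+j=v\pm e$ with $v^{n}=1$ and $e$ a central idempotent, the $+$ case forces $e=0$ and $(1+j)^{n}=1$ (otherwise $ve=je$ would be a nilpotent unit of $eR$), while in the $-$ case $(1+j)^{n}=1$ on $(1-e)R$ and on $eR$ the element $(1+j)e=(2+j)e-e$ lies in the subring generated over the prime ring by the $n$-torsion unit $(2+j)e$, a finite ring of size at most $c^{n}$, hence is itself a root of unity of order at most $c^{n}$. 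Thus every element of $1+J(R)$ has order dividing $L:=\mathrm{lcm}(n,c^{n})$, and therefore $u^{N_{0}L}=1$ for all $u\in U(R)$, so $U(R)$ has finite exponent.

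I expect the genuine difficulty to lie in the first stage of (1) $\Rightarrow$ (2): controlling the semiprimitive quotient $\bar R=R/J(R)$ and extracting a \emph{single} uniform exponent bound from the a priori unbounded family of primitive images. This is precisely where the abelian hypothesis is essential — it eliminates the noncommutative primitive images — and where one leans on the classification of weakly $n$-torsion clean fields in Proposition~\ref{2} together with the exchange/clean-ring techniques of \cite{DMa}; the rest is routine but fiddly bookkeeping of the various bounds in terms of $n$ and $c$.
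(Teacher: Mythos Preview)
Your overall strategy matches the paper's: the paper gives no detailed argument here, merely pointing to Lemma~\ref{eq} and Theorem~\ref{strong} above ``along with the methods developed in \cite{DMa}''.  Your (2)$\Rightarrow$(1) is exactly right, and for (1)$\Rightarrow$(2) you correctly use the abelian hypothesis to pass to the strong case, invoke Theorem~\ref{strong}, and then aim to bound the exponent of $U(R)$.

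Where you diverge is only in the bookkeeping after that point, and you make it harder than necessary.  Once Theorem~\ref{strong} hands you the one-variable identity $g(x)=(x^{n}-1)\bigl((x-1)^{n}-1\bigr)\bigl((x+1)^{n}-1\bigr)=0$ together with a finite characteristic $c$, the exponent bound on $U(R)$ drops out in one step, with no need for the two-stage passage through $\bar R=R/J(R)$, structure theory of primitive PI images, or a separate analysis of $1+J(R)$.  Indeed, for any $u\in U(R)$ the commutative subring $(\mathbb{Z}/c\mathbb{Z})[u]$ is a quotient of the \emph{fixed} finite ring $A=(\mathbb{Z}/c\mathbb{Z})[x]/(g(x))$; since $u$ is a unit of $R$ it is a non-zero-divisor, hence a unit, in this finite quotient, and so $u^{N}=1$ where $N$ is the lcm of the exponents of the unit groups of the (finitely many) quotients of $A$.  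This is the argument \cite{DMa} carries out in the non-weakly situation, and the paper is simply transporting it verbatim.  Your longer route also works, but note that your parenthetical ``alternative''---that the abelian hypothesis forces each primitive image $\mathbb{M}_{k_i}(D_i)$ to be a field via Proposition~\ref{2}---is not justified as stated, since primitive quotients of an abelian semiprimitive ring need not themselves be abelian; fortunately your main PI/eigenvalue argument does not rely on that claim.
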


By combining the ideas  presented above,  closely following  \cite{DMa}, we derive the following consequence.

\begin{cor}\label{strongnew} For a ring $R$, the following two points are equivalent.

\begin{enumerate}
\item $R$ is weakly $n$-torsion clean with the strong property for some $n\in \mathbb{N}$.
\item $R$ is weakly clean with the strong property and $U(R)$ is of finite exponent.
\end{enumerate}

\end{cor}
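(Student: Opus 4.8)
\medskip
\noindent\textbf{Proof proposal.} The plan is to prove the two implications separately: $(2)\Rightarrow(1)$ is a direct unravelling of the definitions, while $(1)\Rightarrow(2)$ rests on Theorem~\ref{strong} together with the structure theory of PI-rings, following closely the scheme of \cite{DMa} and of the proof of Theorem~\ref{abelian}. Note that Theorem~\ref{abelian} cannot be quoted verbatim here, since a ring that is weakly clean with the strong property need not be abelian (upper-triangular matrix rings over local rings already show this), so the relevant conclusions must be re-derived rather than inherited.

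First I would deal with $(2)\Rightarrow(1)$. Assume $R$ is weakly clean with the strong property and that $U(R)$ has finite exponent $m$. For $r\in R$ choose a decomposition $r=u+e$ or $r=u-e$ with $e^{2}=e$, $u\in U(R)$ and $ue=eu$; since $u^{m}=1$, this is a weakly $m$-torsion clean decomposition of $r$ with the strong property. Thus every element of $R$ has such a decomposition for the single exponent $m$, and letting $n$ be the least natural number for which this is possible (such $n$ exists and is at most $m$), Definition~\ref{tors} gives that $R$ is weakly $n$-torsion clean with the strong property.

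For $(1)\Rightarrow(2)$, that $R$ is weakly clean with the strong property is immediate: one merely discards the torsion condition on the units from the decompositions already available. The real content is that $U(R)$ has finite exponent, and here I would proceed as follows. By Theorem~\ref{strong}, $R$ is a PI-ring satisfying $(x^{n}-1)((x\pm1)^{n}-1)=0$, has finite nonzero characteristic, and $J:=J(R)$ is a nil ideal; specializing this identity at elements of $J$ and using that $j^{n}-1$ is a unit for nilpotent $j$ shows moreover that $J$ is nil of bounded index and hence that $1+J$ is a group of bounded exponent. Next, $\bar R:=R/J$ is a semiprimitive PI-ring satisfying the same identity, so by Kaplansky's theorem it is a subdirect product of matrix rings $\mathbb{M}_{t}(D)$ over division rings $D$ with $t$ bounded in terms of the degree of the identity; since $D$ has no zero divisors, every element of $D$ is a root of $x^{n}-1$ or of $(x\pm1)^{n}-1$, whence $D$ is a finite field of bounded order. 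Therefore $U(\bar R)$ is a subdirect product of the bounded finite groups $\mathrm{GL}_{t}(D)$ and so has bounded exponent. Finally, since $J$ is nil the natural map $U(R)\to U(\bar R)$ is onto with kernel $1+J$, and combining the two bounds yields that $U(R)$ has finite exponent.

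The principal obstacle is this last chain, namely establishing that $U(R)$ has finite exponent: it cannot be read off from a normal form, and must instead be extracted from the polynomial identity and the finiteness of the characteristic via the PI-theoretic reduction above, precisely in the manner of \cite{DMa}. Subsidiary technical points are the verification that the bounds on the index of $J$, on $t$, and on $|D|$ are genuinely uniform in $R$ (this is where the fixed degree of $(x^{n}-1)((x\pm1)^{n}-1)$ is used), and, in $(2)\Rightarrow(1)$, the bookkeeping needed to respect the minimality clause in Definition~\ref{tors}.
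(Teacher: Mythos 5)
Your argument is logically sound, but for the key step it takes a genuinely different (and heavier) route than the one the paper intends, which is simply to run the method of \cite{DMa} (as behind Theorem \ref{abelian}) on the strong decompositions themselves. Your direction (2)$\Rightarrow$(1) coincides with the intended easy direction. For (1)$\Rightarrow$(2), the intended argument is elementary and local: given $u\in U(R)$, write $u=v\pm e$ with $v^n=1$, $e^2=e$, $ve=ev$; then $u,v,e$ pairwise commute, $u(1-e)=v(1-e)$ satisfies $(u(1-e))^n=1-e$, while $ue=(v\pm 1)e$ lies in the finite commutative ring $(\mathbb{Z}/c\mathbb{Z})[v]e$, where $c=\mathrm{char}\,R$ is finite and nonzero by Theorem \ref{strong}; being invertible in $eRe$, it is a non-zero-divisor there, hence a unit of that finite ring, of order bounded in terms of $c$ and $n$ only. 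Adding the two orthogonal pieces bounds the order of $u$, so $\exp U(R)$ is finite with no appeal to $J(R)$, to Kaplansky's theorem, or to the structure of semiprimitive PI-rings. Your structural route does work, but it needs two repairs that you gloss over: the conclusion of Lemma \ref{eq} is an element-wise alternative (the sign depends on the element), not a single identity, so before invoking Kaplansky you should pass to the genuine one-variable identity $(x^n-1)^2\bigl((x+1)^n-1\bigr)\bigl((x-1)^n-1\bigr)=0$, which does hold since the two factors evaluated at a fixed element commute; and the step ``$J$ is nil of bounded index, hence $1+J$ has bounded exponent'' is not immediate from $(1\pm j)^n=1$ when the sign varies with $j$ and the characteristic is not prime --- it can be fixed, e.g.\ by induction on the nilpotency index using $(1+j)(1-j)=1-j^2$, but this is exactly the bookkeeping the elementary argument avoids, since it never needs $\exp(1+J)$ at all. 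In exchange, your approach yields extra structural information (a subdirect decomposition of $R/J(R)$ into bounded matrix rings over bounded finite fields), while the paper's approach buys brevity and independence from PI structure theory.
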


Furthermore, taking into account Lemma~\ref{3q-domain} or Proposition~\ref{2}, one sees that all (weakly) $n$-torsion clean fields have to be finite. In this direction, \cite[Theorem 14]{smz-2021} gives a complete description of those finite fields whose matrices are a sum of an idempotent and a $q$-potent for some odd integer $q > 1$. in particular, if the field $\mathbb{F}_{Q}$ is not isomorphic to $\mathbb{F}_{3}$, then each finite matrix over $\FQ$ is the sum of an idempotent and a $q$-potent. However, this is not true for fields of characteristic $2$. To avoid this restriction on the number $q$ to be odd, we just will speak about the representations of matrices over $\mathbb{F}_{Q}$ of an idempotent and an $(\mathrm{LCM}(Q-1,2) + 1)$-potent.

\medskip

We proceed to the goal of the proof of Theorem \ref{q-tcd} through a further series of lemmas.

\begin{lemma}\label{q-tcd0}
Let $Q \geq 5$ be a prime power  and let $p=p(x) \in \mathbb{F}_Q[x]$ be a unitary polynomial of degree $n\geq 1$. Put $d = \mathrm{LCM}(Q-1,2) + 1$. Then the matrix $C(p) \in \mathbb{M}_{n}(\mathbb{F}_{Q})$ is the sum of an idempotent matrix and an invertible $d$-potent matrix.
\end{lemma}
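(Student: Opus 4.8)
The plan is to exploit that $C(p)$ is non-derogatory (its minimal and characteristic polynomials both equal $p$), to break $C(p)$ into a few standard blocks by the primary decomposition, and to handle each block by constructing the idempotent explicitly. I would begin by recording the shape of $d$: since $d-1=\mathrm{LCM}(Q-1,2)$ is even and a multiple of $Q-1$, the number $d$ is odd and $x^{d-1}-1$ splits completely over $\mathbb{F}_Q$, equalling $\prod_{\alpha\neq 0}(x-\alpha)$ when $Q$ is odd and $\bigl(\prod_{\alpha\neq 0}(x-\alpha)\bigr)^{2}$ when $Q$ is even. Consequently an invertible matrix $u$ over $\mathbb{F}_Q$ is a $d$-potent precisely when $u^{d-1}=I$, i.e. $u$ is diagonalisable over $\mathbb{F}_Q$ with spectrum in $\mathbb{F}_Q^{*}$ if $Q$ is odd, while if $Q$ is even the elementary divisors of $u$ all lie among $\{x-\alpha,(x-\alpha)^{2}:\alpha\in\mathbb{F}_Q^{*}\}$.

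Next I would set up the reductions. The property ``equal to an idempotent plus an invertible $d$-potent'' is invariant under conjugation and closed under forming block-diagonal sums; and since $d-1$ is even, $-u$ is an invertible $d$-potent whenever $u$ is, so $A=e+u$ gives $I-A=(I-e)+(-u)$ and thus $A$ has the property if and only if $I-A$ does. By the primary decomposition of $\mathbb{F}_Q[x]/(p)$, the matrix $C(p)$ is similar to $C(x^{a})\oplus C((x-1)^{b})\oplus C(h)$ with $\gcd\bigl(h,x(x-1)\bigr)=1$ (empty summands dropped). Since $I-C((x-1)^{b})$ is similar to the nilpotent Jordan block $C(x^{b})$, the middle summand is handled once nilpotent blocks are; hence it remains to treat (i) a nilpotent Jordan block $N=C(x^{a})$, and (ii) $C(h)$ with $h$ coprime to $x(x-1)$, for which $C(h)$ and $C(h)-I$ are both invertible.

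For block (ii): if $h\mid x^{d-1}-1$ the idempotent $0$ works; if $h=(x-\alpha)^{e}$ with $\alpha\in\mathbb{F}_Q\setminus\{0,1\}$, then $C(h)$ is similar to $\alpha I+N$ with $N$ nilpotent, which reduces to (i) as soon as the $d$-potent built there has spectrum avoiding $-\alpha$ (so that $\alpha I+u$ stays invertible); and if $h$ has an irreducible factor of degree $\ge 2$ one invokes \cite[Theorem~14]{smz-2021} (legitimate because $d$ is odd, $(Q-1)\mid(d-1)$ and $\mathbb{F}_Q\not\cong\mathbb{F}_3$) to write $C(h)=e_{0}+t_{0}$ with $t_{0}$ a $d$-potent, and then corrects $t_{0}$ on $\ker t_{0}$ to make it invertible, using that there $C(h)=e_{0}$ while $C(h)$ is injective. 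Block (i) is the crux: I would produce an idempotent $e$ so that $u=N-e$ is diagonalisable over $\mathbb{F}_Q$ with spectrum in $\mathbb{F}_Q^{*}$. The natural route is to prescribe that spectrum as a multiset of nonzero scalars of cardinality $a$ in which each multiplicity is at most $\mathrm{rank}(e)$ and whose sum equals $-\,\mathrm{rank}(e)$ — which, since $|\mathbb{F}_Q^{*}|=Q-1\ge 4$, can always be arranged — and then to build an idempotent $e$ of the corresponding rank (roughly $a/(Q-1)$) realising it; for $Q$ even the argument is the same but $2\times 2$ unipotent-type blocks are permitted in $u$. Finally one reassembles: conjugate $C(p)$ to the block sum above, write each block as idempotent plus invertible $d$-potent, take the two direct sums (again idempotent, resp. invertible with $(d-1)$st power $I$), and conjugate back.

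The genuinely delicate step is the construction in (i): a single nilpotent Jordan block of arbitrary size is maximally non-semisimple, so rank-one or corank-one idempotents do not suffice for large $a$, and one must engineer an idempotent of intermediate rank whose subtraction from $N$ produces a diagonalisable matrix with the prescribed nonzero spectrum. It is exactly here that the hypothesis $Q\ge 5$ is forced, the fields $\mathbb{F}_2$ and $\mathbb{F}_3$ being genuine exceptions.
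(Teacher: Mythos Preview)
Your plan has two genuine gaps. The more serious is your block~(i), which you yourself identify as the crux: for a single nilpotent Jordan block $N$ of size $a$ you propose to prescribe a target spectrum for $u=N-e$ (a multiset of nonzero scalars of cardinality $a$, trace $-\mathrm{rank}(e)$, each multiplicity at most $\mathrm{rank}(e)$) and then ``build an idempotent $e$ of the corresponding rank realising it''. But you give no construction of $e$ and no argument that these necessary spectral constraints are sufficient; producing such an $e$ is a concrete matrix problem in which all the difficulty resides, and your sketch does not address it. The second gap is the correction step in block~(ii): from $C(h)=e_{0}+t_{0}$ with $t_{0}$ a possibly non-invertible $d$-potent you propose to ``correct $t_{0}$ on $\ker t_{0}$'' using injectivity of $C(h)$; this yields $\ker t_{0}\cap\ker e_{0}=0$, but $\ker t_{0}$ is not in general invariant under $e_{0}$ or $C(h)$, and you neither specify the corrected pair nor verify that it remains an idempotent plus a $d$-potent. (A minor point: your three subcases for $h$ are not exhaustive --- for instance $h=(x-\alpha)^{2}(x-\beta)$ with $\alpha\ne\beta$ in $\mathbb{F}_{Q}\setminus\{0,1\}$ falls in none of them --- though a finer primary decomposition repairs this.)

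The paper's argument is quite different and avoids your crux entirely. It does not decompose $C(p)$ into primary blocks; instead it performs a scalar shift $C(p)-kI_{n}\sim C(p_{1})$ and then invokes Lemmas~1--3 of \cite{smz-2021} (one for each of the cases $\tr(p_{1})=1$, $\tr(p_{1})=0$, $\tr(p_{1})\notin\{0,1\}$) to write $C(p_{1})=e+f$ with $e$ idempotent and $f$ a $d$-potent whose spectrum lies in an explicit set $S\subset\mathbb{F}_{Q}$ of at most four elements. The scalar $k$ is chosen in advance so that $k+S$ avoids $0$; the hypothesis $Q\ge 5$ enters precisely to guarantee that such a $k$ exists. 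Shifting back gives $C(p)\sim e+(f+kI_{n})$ with $f+kI_{n}$ an \emph{invertible} $d$-potent. Invertibility thus comes from the translation, not from any delicate idempotent construction.
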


\begin{proof}
Fix an arbitrary primitive element $\xi$ of the field $\mathbb{F}_{Q}$ such that $\xi \not = 1 - \xi$. In particular, if $Q=5$, then we  choose $\xi$ to be equal to the element $3$. Since $Q \geq 5$, there exists an element $k \in \mathbb{F}_{q}$ having the property $0 \not \in k + \{-1, 0, 1, 2, -\xi, \xi-1\}$. Since the matrix $C(p)-kI_{n}$ is obviously cyclic, then  for some invertible matrix $V \in M_{n}(\mathbb{F}_{Q})$ and a unitary polynomial $p_{1} \in \mathbb{F}_{Q}[x]$ the following equality is fulfilled, namely,
$$
C(p)-kI_{n} = V C(p_{1}) V^{-1}.
$$

Assume now that $n \geq 2$ and that $\tr(p_1) \not = 1 - k$. We distinguishthree basic cases depending on $\tr(p_1)$.

{\it Case 1}: Assume $\tr(p_{1})  = 1$. In accordance with \cite[Lemma 3]{smz-2021} there is a decomposition $C(p_{1}) = e + f$, where $e^2=e, f^{3} = f = f^{d}$ and $\spec(f) \subseteq \{ -1, 0, 1 \}$. Obviously, the $d$-potent $f + k I_{n}$ inverts satisfying the equality
$$
  C(p) =C(p)-kI_{n}+k I_{n} = V ( e + f ) V^{-1} + k I_{n} = V e V^{-1} + V (f + k I_{n}) V^{-1}.
$$

{\it Case 2}: Assume $\tr(p_{1}) = 0$. According to \cite[Lemma 2]{smz-2021} there is a decomposition $C(p_{1}) = e + f$, where $f^{q} = f = f^{d}, e^2=e$ and $\spec(f) \subseteq \{ -1, 0, -\xi, \xi-1 \}$. In particular, if $m = 2$, then the decomposition of the matrix $C(p_{1})$ has the form
$$
\left(
\begin{array}{cc}
  0 & a_{0}\\
  1 & 0
\end{array}
\right) =
\left(
\begin{array}{cc}
  1-\xi & \xi (1-\xi)\\
  1 & \xi
\end{array}
\right) +
\left(
\begin{array}{cc}
  \xi-1 & a_{0} - \xi (1-\xi)\\
  0 & -\xi
\end{array}
\right).
$$
Then the $d$-potent $f + k I_{n}$ inverts and also satisfies the equality $C(p) =V e V^{-1} + V (f + k I_{n}) V^{-1}$.

{\it Case 3}: Assume $\tr(p_{1}) \not = 0,1$. Bearing in mind \cite[Lemma 1]{smz-2021}, we can decompose $C(p_1) = e + f$, where $f^{d} = f$ and $\spec(f) \subseteq \{ -1, 0, \tr(p_{1})-1 \}$. In view of the choice of the element $k$, the $d$-potent element $f + k I_{n}$ is seen to be invertible and one may write that $C(p) =V e V^{-1} + V (f + k I_{n}) V^{-1}$.

We next assume for a moment that $\tr(p_{1}) = a_{n-1} = 1-k$. It follows from the initial choice of the element $k$ that $1-k \not \in \{-1,0,1\}$. Since the matrix $-C(p_{1})$ is cyclic, for some invertible matrix $W \in \mathbb{M}_{n}(\F)$ and a unitary polynomial $p_{2} \in \F[x]$ the following equality is true, namely,
$$
-C(p_1)= W C(p_{2}) W^{-1}.
$$
Moreover,
$$
\tr(p_{2}) = \tr(-C(p_{1})) = k-1 \not = 0, 1.
$$
Therefore, using \cite[Lemma 1]{smz-2021}, we get that $C(p_2) = e + f$, where $\spec(f) \subseteq \{ -1, 0, k-2 \}$. Furthermore, one deduces that
$$
C(p_1) + k I_{n} = -W C(p_2) W^{-1} + k I_{n} = W( -e-f)W^{-1} + k I_{n}
$$
$$
\qquad \qquad=W(I_{n}-e)W^{-1} + W( -f + (k-1)I_{n})W^{-1}.
$$
Also, the equality $\left( -f + (k-1)I_{n} \right)^{q} = (-1)^{q}f + (k-1)I_{n}$ holds. Consequently, the element $ -f + (k-1)I_{n} $ must be a $d$-potent. However, because of the inclusion $\spec(( -f + (k-1)I_{n})) \subseteq \{ k, k-1, 1 \}$, one infers that $V( -f + (k-1) )V^{-1}$ is an invertible $d$-potent.

Finally, it remains to treat the case when $n=1$. To that goal, for the element $a \in \FQ$, which differs from $-k$, the decomposition $a = e_{a} + f_{a} = 0 + a$ ensures the invertibility of the $q$-potent $(f_{a} + k)$. If, however, we have that $a = -k$, then we may write that $-k = e_{-k} + f_{-k} = 1 + (-k-1)$, as required.
\end{proof}

Note that Lemma~\ref{q-tcd0} restricted our attention to fields containing at least five elements. On the other hand, in \cite{DMa} it was conjectured that in the ring $\mathbb{M}_{n}(\mathbb{F}_{2})$ each element is the sum of an idempotent and an invertible $(n+1)$-potent. It follows, however, from \cite{smz-2021} and Proposition~\ref{F3-not-pm} above that the structure of the matrices considered over $\mathbb{F}_{3}$ are also not completely described. Nevertheless, we can offer in the sequel some description of matrices over the field $\mathbb{F}_{4}$ consisting of four elements.

\medskip

In the next statement the notation $\mathbb{M}_{n,p}(R)$ means the set of matrices of size $n\times p$ over a ring $R$.

\begin{lemma}\label{tech}
Let $R$ be a commutative ring, $r$ and $s$ polynomials over $R$, and $A \in \mathbb{M}_{n}(R)$, $B \in \mathbb{M}_{n,p}(R)$, $C \in \mathbb{M}_{p}(R)$ such that $r(A) = 0$ and $s(C) = 0$. Then the equality $(rs)(M) = 0$ holds for the upper triangular block-matrix
$$
  M =
  \left(
  \begin{array}{cc}
    A & B\\
    {[0]} & C
  \end{array}
  \right)
$$
\end{lemma}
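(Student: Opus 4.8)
The plan is to reduce everything to the elementary fact that a polynomial evaluated at a block upper triangular matrix is again block upper triangular, with the polynomial evaluated at the diagonal blocks appearing on the diagonal. Concretely, I would first observe that the set $\mathcal{T}$ of matrices of the shape $\left(\begin{smallmatrix} X & Y \\ 0 & Z\end{smallmatrix}\right)$ with $X \in \mathbb{M}_n(R)$, $Y \in \mathbb{M}_{n,p}(R)$, $Z \in \mathbb{M}_p(R)$ is a subring of $\mathbb{M}_{n+p}(R)$ which contains every scalar matrix and contains $M$ itself, and that the two maps $\pi_1, \pi_2$ sending such a matrix to $X$, respectively $Z$, are ring homomorphisms fixing scalars. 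This is immediate from the block multiplication formula $\left(\begin{smallmatrix} X & Y \\ 0 & Z\end{smallmatrix}\right)\left(\begin{smallmatrix} X' & Y' \\ 0 & Z'\end{smallmatrix}\right) = \left(\begin{smallmatrix} XX' & XY'+YZ' \\ 0 & ZZ'\end{smallmatrix}\right)$.

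Next, since $R$ is commutative, for any square matrix $D$ over $R$ the evaluation map $R[x] \to \mathbb{M}_k(R)$, $f \mapsto f(D)$, is a ring homomorphism (the scalar matrices are central and $D$ commutes with itself); in particular $(rs)(M) = r(M)s(M)$. Composing this with $\pi_1$ and $\pi_2$ — both of which fix scalars and send $M$ to $A$ and to $C$ respectively — one obtains $\pi_1(f(M)) = f(A)$ and $\pi_2(f(M)) = f(C)$ for every $f \in R[x]$. Applying this with $f = r$ and $f = s$, and using the hypotheses $r(A) = 0$ and $s(C) = 0$, it follows that $r(M) = \left(\begin{smallmatrix} 0 & B_1 \\ 0 & r(C)\end{smallmatrix}\right)$ and $s(M) = \left(\begin{smallmatrix} s(A) & B_2 \\ 0 & 0\end{smallmatrix}\right)$ for suitable $B_1, B_2 \in \mathbb{M}_{n,p}(R)$.

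Finally, I would simply multiply these two matrices: $(rs)(M) = r(M)s(M) = \left(\begin{smallmatrix} 0 & B_1 \\ 0 & r(C)\end{smallmatrix}\right)\left(\begin{smallmatrix} s(A) & B_2 \\ 0 & 0\end{smallmatrix}\right)$, and each of the four blocks of the product vanishes, whence $(rs)(M) = 0$, as claimed. There is no genuine obstacle here; the statement is essentially a bookkeeping exercise. The only point that deserves a moment's attention is to keep track of where commutativity of $R$ is used — it enters precisely to guarantee that $f \mapsto f(D)$ is multiplicative, so that the polynomial product $rs$ corresponds to the matrix product $r(M)s(M)$, and that the scalar matrices are central, so that $\pi_1$ and $\pi_2$ are indeed ring homomorphisms.
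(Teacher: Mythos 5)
Your proof is correct and follows exactly the route the paper intends: the paper's one-line proof invokes $(rs)(M)=r(M)s(M)$ and leaves the block bookkeeping to the reader, which is precisely what you carry out via the block-triangular subring, the diagonal-block projections, and the final product computation.
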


\begin{proof}
This   follows straightforwardly  from the equation $(rs)(M) = r(M)s(M)$, which we leave to be proved by the interested reader.
\end{proof}

\begin{lemma}\label{F4-3}
Let $p \in \mathbb{F}_{4}[x]$ be a unitary polynomial of degree $n\geq 3$ , where  $n$ is odd. Then the matrix $C(p) \in \mathbb{M}_{n}(\mathbb{F}_{4})$ is a sum of an idempotent matrix and an invertible $7$-potent matrix.
\end{lemma}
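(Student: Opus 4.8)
The plan is to imitate the proof of Lemma~\ref{q-tcd0}: translate $C(p)$ by a scalar matrix $kI_{n}$, peel an idempotent off the resulting (again cyclic) companion matrix by means of the building-block decompositions $C(p_{1})=e+f$ of \cite[Lemmas 1--3]{smz-2021}, and translate back, obtaining $C(p)=VeV^{-1}+V(f+kI_{n})V^{-1}$. What must be checked at the end, and what is special to $\mathbb{F}_{4}$, is that $f+kI_{n}$ is an \emph{invertible} $7$-potent. Since $\mathbb{F}_{4}^{\times}$ is exactly the group of cube roots of unity, an invertible matrix over $\mathbb{F}_{4}$ is a $7$-potent precisely when $f^{6}=I_{n}$, i.e. precisely when its minimal polynomial divides $(x^{3}-1)^{2}=(x-1)^{2}(x-\omega)^{2}(x-\omega^{2})^{2}$ (with $\omega$ a primitive cube root of $1$); equivalently, its eigenvalues lie in $\mathbb{F}_{4}^{\times}$ and all its Jordan blocks have size at most $2$. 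In particular, every matrix that is diagonalizable with spectrum inside $\mathbb{F}_{4}^{\times}$ qualifies. Because a scalar translation preserves diagonalizability and Jordan-block sizes and merely shifts the spectrum, the task reduces to producing a decomposition $C(p_{1})=e+f$ with $f$ a potent of \cite{smz-2021}-type and then choosing $k$ so that $\spec(f)+k\subseteq\mathbb{F}_{4}^{\times}$.

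The hypothesis that $n$ is odd is precisely what makes the translation useful: $\tr(kI_{n})=nk=k$ in characteristic $2$, so $C(p)-kI_{n}$ is cyclic with $\tr(p_{1})=\tr(p)+k$, and hence $\tr(p_{1})$ may be made any prescribed element of $\mathbb{F}_{4}$. I would then proceed by cases on $\tr(p)$. If $\tr(p)=0$, choose $k=\omega$, so that $\tr(p_{1})=\omega\notin\{0,1\}$; by \cite[Lemma 1]{smz-2021}, $C(p_{1})=e+f$ with $f^{7}=f$ (so Jordan blocks of size at most $2$) and $\spec(f)\subseteq\{-1,0,\tr(p_{1})-1\}=\{1,0,\omega^{2}\}$, whose translate by $\omega$ is $\{\omega^{2},\omega,1\}\subseteq\mathbb{F}_{4}^{\times}$, so $f+\omega I_{n}$ is an invertible $7$-potent. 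If $\tr(p)\in\{\omega,\omega^{2}\}$, choose $k=1+\tr(p)$, so that $\tr(p_{1})=1$; by \cite[Lemma 3]{smz-2021}, $C(p_{1})=e+f$ with $f^{3}=f$ and $\spec(f)\subseteq\{0,1\}$, and translating back by $k$ sends $\{0,1\}$ to $\{k,k+1\}\subseteq\mathbb{F}_{4}^{\times}$ (as $k\notin\{0,1\}$), again with Jordan blocks of size at most $2$ preserved. In both cases the low-degree instances are absorbed by the explicit $2\times 2$ and $3\times 3$ blocks of \cite{smz-2021} exactly as in Lemma~\ref{q-tcd0}, and one may use Lemma~\ref{tech} to reduce the $7$-potency check for the assembled $f$ to that of its diagonal blocks; that $n\ge 3$ is odd guarantees that the residual block produced by the reduction has odd size and is matched to a $3\times 3$ (or $1\times 1$) piece, so no even remainder is ever left over.

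The step I expect to be the genuine obstacle is the case $\tr(p)=1$. A short computation shows that for any $k$ the translate of the eigenvalue $\tr(p_{1})-1$ in the \cite[Lemma 1]{smz-2021} spectrum equals $(\tr(p_{1})-1)+k=\tr(p)+1$, which is $0$ when $\tr(p)=1$; and one cannot drop $\tr(p_{1})-1$ from $\spec(f)$ either, because $\tr(e)+\tr(f)$ lies in $\mathbb{F}_{2}$ whereas any nonzero shift makes $\tr(p_{1})\in\{\omega,\omega^{2}\}$, forcing $f$ to have a non-$\mathbb{F}_{2}$ eigenvalue, necessarily $\tr(p_{1})-1$. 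The \cite[Lemma 2]{smz-2021} route for $\tr(p_{1})=0$ is useless here since over $\mathbb{F}_{4}$ the set $\{-1,0,-\xi,\xi-1\}$ is all of $\mathbb{F}_{4}$ for either primitive $\xi$, and the \cite[Lemma 3]{smz-2021} route forces $k=0$, after which $f$ itself may be singular. So the trace-$1$ subcase needs a tailored construction in which the potent part is invertible with no translation at all: I would look for an idempotent $e$ of suitable rank $r$ (with $\tr(e)\equiv r \pmod 2$ chosen to make $\tr(C(p)-e)$ consistent) such that $C(p)-e$ has the characteristic polynomial of an invertible $7$-potent, e.g. $(x^{3}-1)$ times a power of $(x^{2}+x+1)^{2}$ (so that its eigenvalues automatically lie in $\{1,\omega,\omega^{2}\}$); the rank-$r$ analogue of the formula $\det(xI_{n}-C(p)+vw^{\mathsf T})=p(x)\bigl(1+w^{\mathsf T}(xI_{n}-C(p))^{-1}v\bigr)$ for companion matrices makes this a solvable linear problem, the side condition being exactly that the perturbation be idempotent. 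I would isolate this as a preliminary claim, verify it by an explicit $3\times 3$ computation (for $\tr(p)=1$ with $n=3$ one finds a rank-$1$ idempotent $e$ with $C(p)-e$ of characteristic polynomial $x^{3}-1$, hence diagonalizable with cube-root-of-unity eigenvalues), and propagate it in steps of two via the $2\times 2$ building-block reduction of \cite{smz-2021}; establishing that this construction always succeeds — i.e. that the required idempotent perturbation exists for every monic $p$ of odd degree $\ge 3$ with $\tr(p)=1$ — is the crux of the whole argument.
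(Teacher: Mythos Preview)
Your scalar-translation strategy and the $7$-potency test via divisibility of the minimal polynomial by $(x^{3}-1)^{2}$ match the paper exactly, as does the case split on $\tr(p)$. The decomposition step differs: you route through \cite[Lemmas~1--3]{smz-2021} as black boxes, whereas the paper in every case builds the idempotent explicitly as $e=(1)\oplus A_{1}\oplus\cdots\oplus A_{k}$ with each $A_{i}$ a fixed $2\times2$ idempotent, observes that $C(p_{1})-e$ is block upper-triangular, and reads off spectrum and Jordan-block bounds via Lemma~\ref{tech}. For $\tr(p)\in\{0,\xi,\xi+1\}$ your black-box route is plausible and reaches essentially the same endpoint, though you assert without checking that the $f$ produced by \cite[Lemma~1]{smz-2021} over $\mathbb{F}_{4}$ has Jordan blocks of size at most $2$ (note that Lemma~\ref{q-tcd0} deliberately excludes $Q<5$, so this is not automatic).

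The genuine gap is $\tr(p)=1$. Your obstruction is correct: for every shift $k$ the Lemma~1 spectrum translates back to contain $\tr(p)+1=0$, and Lemmas~2--3 are likewise blocked. But your proposed fix --- a rank-$r$ idempotent perturbation of $C(p)$ engineered to have a prescribed invertible-$7$-potent characteristic polynomial, verified at $n=3$ and ``propagated in steps of two'' --- is only a programme; neither the general perturbation nor the propagation step is actually carried out, so the proof is incomplete here. The paper's resolution is far simpler and is exactly what the explicit-$e$ approach buys: translate by $1+\xi$ (so $\tr(p_{1})=\xi$) and keep the same block-diagonal $e=(1)\oplus A_{1}\oplus\cdots\oplus A_{k}$ \emph{except} that the final block $A_{k}$ is switched from $\bigl(\begin{smallmatrix}0&0\\1&1\end{smallmatrix}\bigr)$ to $\bigl(\begin{smallmatrix}1&0\\1&0\end{smallmatrix}\bigr)$. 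This single-block swap shifts the diagonal of $C(p_{1})-e$ just enough that, after adding back $(1+\xi)I_{n}$, the spectrum lands in $\{1,\xi,1+\xi\}=\mathbb{F}_{4}^{\times}$ with all Jordan blocks of size at most $2$; Lemma~\ref{tech} then gives annihilation by $(x-1)(x-1-\xi)(x-\xi)^{2}\mid(x^{3}-1)^{2}$. So the freedom to hand-pick $e$ block by block is precisely what dissolves the obstruction you identified.
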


\begin{proof}  We have $\mathbb{F}_4= \{0, 1, \xi, \xi+1\}$, where $\xi^2+\xi+1=0$.
  Let $n = 2k+1$ for some positive integer $k$. We shall consider three case sassociated with the value of $\tr(p)$.

{\it Case 1}: Assume $\tr(p)  = 0$. Since the matrix $C(p)- \xi I_{n}$ is cyclic, for some invertible matrix $V \in \mathbb{M}_{n}(\mathbb{F}_{4})$ and some unitary polynomial $p_{1} \in \mathbb{F}_{4}[x]$ then  the following equality is valid, namely,
$$
C(p)- \xi I_{n} = V C(p_{1}) V^{-1},
$$
where $\tr(p_{1}) = \xi$. We also define the idempotent $e = (1)\oplus A_{1}\oplus A_{2} \ldots \oplus A_{k} \in \mathbb{M}_{n}(\mathbb{F}_{4})$, where $A_{1}=  A_{2} =  \cdots = A_{k} =
\begin{pmatrix}
0 & 0  \\
1 & 1
\end{pmatrix}$. In this case, the matrix $C(p_{1}) - e$ is an upper triangular block: precisely, we have that
$C(p_{1}) - e =
\begin{pmatrix}
H & T\\
0 & 1+\xi
\end{pmatrix}
$, where $H = B_{1}\oplus B_{2}\oplus \ldots \oplus B_{k}$ and $B_{1}=  B_{2} =  \cdots = B_{k} =
\begin{pmatrix}
1 & 0  \\
1 & 0
\end{pmatrix}$.

Since the matrix $H$ is annihilated by the polynomial $r(x) = x(x-1)$, while the matrix $(1 + \xi)$ is annihilated  by the polynomial $s(x) = x - (1+\xi)$, Lemma~\ref{tech} allows us to conclude that the matrix $C(p)-e$ under the product $rs$, so that it is a $4$-potent. As in the proof of Lemma~\ref{q-tcd0}, we observe that $\left( C(p_{1})-e \right) + \xi I_{n}$ is an invertible $4$-potent, whence the matrix  $C(p)$ is a sum of an idempotent and an invertible $7$-potent.

{\it Case 2}: Assume $\tr(p)  \in \{\xi, \xi+1\}$. Since the matrix $C(p)- \tr(p) I_{n}$ is cyclic, for some invertible matrix $V \in \mathbb{M}_{n}(\mathbb{F}_{4})$ and a unitary polynomial $p_{1} \in \mathbb{F}_{4}[x]$ the following equality is valid, namely,
$$
C(p)- \tr(p) I_{n} = V C(p_{1}) V^{-1},
$$
where $\tr(p_{1}) = 0$. As in the preceding Case 1, we define the idempotent $e \in \mathbb{M}_{n}(\mathbb{F}_{4})$. Hence the matrix $C(p_{1}) - e$ is an upper triangular block: specifically, we have that
$C(p_{1}) - e =
\begin{pmatrix}
H & T\\
0 & 1
\end{pmatrix}
$, where $H = B_{1}\oplus B_{2}\oplus \ldots \oplus B_{k}$ and all matrices $B_{i}$ are as in Case 1  above.

We   deduce consequently that $$\left( C(p_{1})-e \right) + \tr(p) I_{n} =
\begin{pmatrix}
H + \tr(p) I_{n-1} & T\\
0 & 1 + \tr(p)
\end{pmatrix}.$$
But the matrix $H + \tr(p) I_{n-1}$  is  annihilated  by the polynomial $r(x) = (x - \tr(p))(x - 1 - \tr(p))$, and the matrix $(1+\tr(p))$  by the polynomial $s(x) = x - 1 - \tr(p)$.  Hence, an application of Lemma~\ref{tech}  guarantees that the matrix $\left( C(p_{1})-e \right) + \tr(p) I_{n}$ vanishes underr  the product $rs$, and so  additionally under the  polynomial $x^{7}-x = x (x^{3}-1)^{2}$ . Accordingly, $\left( C(p_{1})-e \right) + \tr(p) I_{n}$ is an invertible $7$-potent, whence the matrix $C(p)$ is a sum of an idempotent and an invertible $7$-potent.

{\it Case 3}: Assume $\tr(p)  = 1$. Again $C(p)- (1+\xi) I_{n} = V C(p_{1}) V^{-1}$ with $\tr(p_{1}) = \xi$. Similarly, one chooses $e = (1) \oplus A_{1}\oplus A_{2}\oplus \ldots \oplus A_{k}$ such that $A_{1}=  A_{2} =  \cdots = A_{k-1} =
\begin{pmatrix}
0 & 0  \\
1 & 1
\end{pmatrix}$,
$
A_{k} =
\begin{pmatrix}
1 & 0  \\
1 & 0
\end{pmatrix}$. In that case, the matrix $C(p_{1}) - e$ is an upper triangular block: concretely, we have that
$C(p_{1}) - e =
\begin{pmatrix}
H & T\\
0 & \xi
\end{pmatrix}
$, where $H = B_{1} \oplus B_{2} \oplus \ldots \oplus B_{k}$ and $B_{1}=  B_{2} =  \cdots = B_{k-1} =
\begin{pmatrix}
1 & 0  \\
1 & 0
\end{pmatrix}$,
$
B_{k} =
\begin{pmatrix}
1 & 0  \\
1 & 1
\end{pmatrix}$.

Furthermore, one derives that
$$\left( C(p_{1})-e \right) + (1 + \xi) I_{n} =
\begin{pmatrix}
H + (1 + \xi) I_{n-1} & T\\
0 & 1
\end{pmatrix}.$$
Since $H + (1+\xi) I_{n-1}$ is annililated  by the polynomial $r(x) = (x - 1 - \xi)(x-\xi)^{2}$, and the matrix $(\xi)$  by the polynomial $s(x) = x - 1$, applying Lemma~\ref{tech} again yields the fact that the matrix $\left( C(p_{1})-e \right) + (1+\xi) I_{n}$ vanishes under the product $rs$, and thus too under the polynomial $x^{7}-x = x (x^{3}-1)^{2}$. Now, the matrix $\left( C(p_{1})-e \right) + (1+\xi) I_{n}$ must be an invertible $7$-potent, whence the matrix $C(p)$ is a sum of an idempotent and an invertible $7$-potent.
\end{proof}

\begin{lemma}\label{F4-2}
Let $p \in \mathbb{F}_{4}[x]$ be a unitary polynomial of degree $n \geq 2$, where$n$  even. Then the matrix $C(p) \in \mathbb{M}_{n}(\mathbb{F}_{4})$ is a sum of an idempotent matrix and an invertible $7$-potent matrix.
\end{lemma}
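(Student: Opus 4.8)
The plan is to mirror the proof of Lemma~\ref{F4-3}, modifying the block layout to an even matrix size. Throughout write $n=2k$, recall $\mathbb{F}_{4}=\{0,1,\xi,\xi+1\}$ with $\xi^{2}+\xi+1=0$, so that $\xi+1=\xi^{2}$ and, in characteristic $2$, $x^{7}-x=x(x^{3}-1)^{2}$; thus a matrix over $\mathbb{F}_{4}$ is a $7$-potent exactly when it is annihilated by $x(x^{3}-1)^{2}$, and an invertible $7$-potent exactly when it is annihilated by $(x^{3}-1)^{2}$. As in Lemma~\ref{F4-3} I would split into the three cases $\tr(p)=0$, $\tr(p)=1$, and $\tr(p)\in\{\xi,\xi+1\}$ (the last two being interchangeable by the Frobenius automorphism). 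One new feature relative to the odd case must be noted at the outset: a scalar shift $C(p)\mapsto C(p)-cI_{n}$ does \emph{not} alter the trace of the associated companion polynomial, because $nc=0$ in $\mathbb{F}_{4}$; hence the three cases cannot be reduced to a single normal form and must be handled as they stand.

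In each case I would pick a scalar $c\in\mathbb{F}_{4}$ (to be determined) and write $C(p)-cI_{n}=VC(p_{1})V^{-1}$ with $\tr(p_{1})=\tr(p)$, then subtract an idempotent $e$ which is block-diagonal with a leading $1\times 1$ block $1$, a string of $k-1$ interior $2\times 2$ idempotent blocks $A_{i}=\begin{pmatrix}0&0\\1&1\end{pmatrix}$ placed so as to straddle consecutive diagonal positions, and a trailing $1\times 1$ block $\delta\in\{0,1\}$. Exactly as in Lemma~\ref{F4-3}, the lower-left $1$-entries of the $A_{i}$ cancel precisely those sub-diagonal $1$'s of $C(p_{1})$ that would otherwise fall below a block-upper-triangular shape; the crucial difference is that, $n$ being even, the \emph{last} sub-diagonal $1$ of $C(p_{1})$ has no straddling block available, so one ends up with
$$
C(p_{1})-e=\begin{pmatrix}H&T\\0&D\end{pmatrix},\qquad H=B_{1}\oplus\cdots\oplus B_{k-1},\quad B_{i}=\begin{pmatrix}1&0\\1&0\end{pmatrix},
$$
where $H$ is annihilated by $x(x-1)$ and the corner $D=\begin{pmatrix}1&\beta\\1&\tr(p)+\delta\end{pmatrix}$ is now a $2\times 2$ block; here $\beta$ is the penultimate coefficient of $p_{1}$, hence varies with $c$. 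So, unlike in Lemma~\ref{F4-3}, the corner carries an uncontrolled coefficient $\beta$, and it is on the choice of $c$ and $\delta$ (and, if needed, on replacing the last interior block $A_{k-1}$ and $B_{k-1}$ by the variants $\begin{pmatrix}1&0\\1&0\end{pmatrix}$, $\begin{pmatrix}1&0\\1&1\end{pmatrix}$ used in Case~3 of Lemma~\ref{F4-3}) that the argument turns.

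I would then choose $\delta$ so that $\tr(p)+\delta\neq 1$ (always possible), which makes the trace $1+\tr(p)+\delta$ of $D$ nonzero and hence forces $D$ to have two distinct eigenvalues; a short computation over $c\in\{\xi,\xi^{2}\}$ — together with the Case~3 variant when the direct choice fails — shows one can further arrange that these eigenvalues lie in $\mathbb{F}_{4}$ (so that $D$ is annihilated by a split polynomial), that $D$ is not a nilpotent or a unipotent Jordan block, and that $c$ avoids $\spec(C(p_{1})-e)=\{0,1\}\cup\spec(D)$. Lemma~\ref{tech} then gives that $C(p_{1})-e$ is annihilated by $x(x-1)$ times the minimal polynomial of $D$ (with an extra factor $(x-1)$ if the variant was used), hence is a $7$-potent; and $(C(p_{1})-e)+cI_{n}$ is then an invertible $7$-potent, because $c\notin\{0,1\}$ together with $c\notin\spec(D)$ keeps every root-multiplicity of the shifted annihilator at most $2$ and no root equal to $0$, so that it still divides $x^{7}-x=x(x^{3}-1)^{2}$. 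Then $C(p)=VeV^{-1}+V\bigl((C(p_{1})-e)+cI_{n}\bigr)V^{-1}$ is the desired sum of an idempotent and an invertible $7$-potent. The degenerate case $n=2$ (where $k=1$ and $H$ is empty) is handled by the same construction, with in fact more freedom in the choice of $c$ since $\{0,1\}$ need not lie in $\spec(C(p_{1})-e)$.

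The main obstacle is the finite verification in the middle step: that, in each of the three trace cases and for each value of the coefficient $\beta$, one can indeed pick $c\in\{\xi,\xi^{2}\}$ (or $c\in\mathbb{F}_{4}$ when $n=2$), $\delta\in\{0,1\}$, and possibly the Case~3 variant, so that $D$ becomes a $7$-potent over $\mathbb{F}_{4}$ avoiding the bad configurations (characteristic polynomial irreducible, $D$ nilpotent, $D$ unipotent non-semisimple, or $\spec(D)=\{\xi,\xi^{2}\}$). This is a slightly more delicate case analysis than in Lemma~\ref{F4-3}, precisely because the parity of $n$ forces a $2\times 2$ corner rather than the $1\times 1$ one available there; but every sub-case is a two-dimensional computation over a four-element field, and I expect the same circle of ideas (especially the Case~3 variant) to cover all of them.
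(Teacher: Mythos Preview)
Your belief that ``the last sub-diagonal $1$ of $C(p_{1})$ has no straddling block available'' is the source of all the complication, and it is mistaken. In the odd case the leading $1\times1$ block $(1)$ in $e$ is present only to absorb the parity of $n$; when $n=2k$ is even you simply drop it and take $e=A_{1}\oplus\cdots\oplus A_{k}$ with every $A_{i}=\begin{pmatrix}0&0\\1&1\end{pmatrix}$. Then $A_{k}$ occupies rows and columns $2k-1,2k$, and its lower-left entry cancels \emph{precisely} the last sub-diagonal $1$ of $C(p_{1})$. The result is a $1\times1$ corner $1+\tr(p)$, not a $2\times2$ one, and the upper block is $H=(0)\oplus B_{1}\oplus\cdots\oplus B_{k-1}$, still annihilated by $x(x-1)$. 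No uncontrolled coefficient $\beta$ ever enters.

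With this single adjustment the paper's proof handles all four values of $\tr(p)$ in one stroke: choose $d=1+\xi$ if $\tr(p)=1+\xi$ and $d=\xi$ otherwise; then $d$, $1+d$ and $1+\tr(p)+d$ are all nonzero, so after the shift by $d$ the annihilator $(x-d)(x-1-d)\bigl(x-(1+\tr(p)+d)\bigr)$ has only nonzero roots in $\mathbb{F}_{4}$, each with multiplicity at most two, and hence divides $(x^{3}-1)^{2}=x^{7}-x$ over $\mathbb{F}_4$ with no root at $0$. No case split on $\tr(p)$, no Case-3 variant, and no finite verification over $\beta$ is required. Your outline, by contrast, leaves the heart of the argument---the claimed ``short computation'' that the $2\times2$ corner can always be tamed---unverified, and the circular dependence (your shift $c$ determines $\beta$, which determines $\spec(D)$, which in turn constrains the admissible $c$) makes it far from clear that it can be completed.
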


\begin{proof}
Assume that $n = 2k \geq 2$. Fix an arbitrary primitive element $\xi$ of the field $\mathbb{F}_{4}$. We further define the element $d \in \mathbb{F}_{4}$ in the following manner: $d = 1+\xi$ if $\tr(p) = 1 + \xi$, or $d = \xi$ otherwise. The choice of $d$ guarantees that the elements $d$, $1 + d$ and $1 + \tr(p) + d$ are non-zero for each value of $\tr(p)$.

Since the matrix $C(p) - d I_{n}$ is cyclic, for some invertible matrix $V \in \mathbb{M}_{n}(\mathbb{F}_{4})$ and unitary polynomial $p_{1} \in \mathbb{F}_{4}[x]$ the following equality is true, namely,
$$
C(p) - d I_{n} = V C(p_{1}) V^{-1},
$$
where $\tr(p_{1}) = \tr(p)$. Also, define the idempotent $e = A_{1} \oplus  A_{2} \oplus \ldots \oplus A_{k} \in \mathbb{M}_{n}(\mathbb{F}_{4})$, where $A_{1}=  A_{2} =  \cdots = A_{k} =
\begin{pmatrix}
0 & 0  \\
1 & 1
\end{pmatrix}$. In that case, the matrix $C(p_{1}) - e$ is an upper triangular block: exactly, we have that
$C(p_{1}) - e =
\begin{pmatrix}
H & T\\
0 & 1+\tr(p)
\end{pmatrix}
$, where $H = (0) \oplus B_{1} \oplus B_{2} \oplus \ldots \oplus B_{k}$ and $B_{1}=  B_{2} =  \cdots = B_{k} =
\begin{pmatrix}
1 & 0  \\
1 & 0
\end{pmatrix}$.

We thus obtain that $$\left( C(p_{1})-e \right) + d I_{n} =
\begin{pmatrix}
H + d I_{n-1} & T\\
0 & 1 + \tr(p) + d
\end{pmatrix}.$$
Because the matrix $H + d I_{n-1}$ is annihilated by the polynomial $r(x) = (x - d)(x-1-d)$ and  the matrix $(1 + \tr(p) + d)$  by the polynomial $s(x) = x - (1 + \tr(p) + d)$, one may conclude from Lemma~\ref{tech} that the matrix $\left( C(p_{1})-e \right) + d I_{n}$ vanishes under the product $rs$, and hence under the polynomial $x^{7}-x = x (x^{3}-1)^{2}$. Therefore, $\left( C(p_{1})-e \right) + d I_{n}$ has to be an invertible $7$-potent, whence $C(p)$ must be a sum of an idempotent and an invertible $7$-potent.
\end{proof}

Now at last we are ready for the proof of Theorem \ref{q-tcd}.

\medskip
{\sc Proof of Theorem \ref{q-tcd}.}
Given $A\in \mathbb{M}_n(\mathbb{F}_{q})$ with $q\geq 4$. One checks that the matrix $A$ is similar to a matrix of the type $A_1 \oplus A_2\oplus \ldots \oplus A_k$, where $A_i$ is a Frobenius block for each $1\leq i\leq k$. If $q \geq 5$, then Lemma~\ref{q-tcd0} tells us that the matrix $A$ is a sum of an idempotent matrix and an invertible $d$-potent matrix. But if $q = 4$, then $d = 7$ and we may apply Lemmas~\ref{F4-3} and \ref{F4-2}.
\qed

\medskip

Theorem \ref{q-tcd} yields a valuable corollary.

\begin{cor}\label{q-tcd-eq}
Suppose $q > 1$  is an odd integer and $R$ is an integral ring not isomorphic to any of the fields $\mathbb{F}_{2}$ or $\mathbb{F}_{3}$. Then the following three conditions are equivalent.
\begin{enumerate}
    \item [(1)]  For every (for some) $n \in \mathbb{N}$, each matrix in the matrix ring $\mathbb{M}_{n}(R)$ can be expressed as a sum of an idempotent matrix and an invertible $q$-potent matrix.
    \item [(2)]  For every (for some) $n \in \mathbb{N}$, each matrix in the the matrix ring $\mathbb{M}_{n}(R)$ can be expressed as a sum of an idempotent matrix and a $q$-potent matrix.
    \item[(3)] $R$ is a finite field and $(|R|-1) \mid q-1$.
\end{enumerate}

In addition, if $2 \in U(R)$ and  $R$ is not isomorphic to $\mathbb{F}_{3}$, $\mathbb{F}_{5}$ or $\mathbb{F}_{9}$, then each of  the conditions $(1)-(3)$ is  also equivalent to a the urther  condition that  follows.

\begin{enumerate}
   \item [(4)]  For every (for some) $n \in \mathbb{N}$, each matrix in the matrix ring $\mathbb{M}_{n}(R)$ can be expressed as a sum of an involution and an invertible $q$-potent matrix.
\end{enumerate}

\end{cor}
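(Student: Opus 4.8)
The plan is to prove the cycle $(1)\Rightarrow(2)\Rightarrow(3)\Rightarrow(1)$ and then, under the extra hypotheses, the two implications $(3)\Rightarrow(4)$ and $(4)\Rightarrow(3)$. The step $(1)\Rightarrow(2)$ is immediate, since an invertible $q$-potent matrix is in particular a $q$-potent matrix. For $(2)\Rightarrow(3)$ I would simply quote \cite[Theorem 14]{smz-2021}, which is available because $R\not\cong\mathbb{F}_3$; it yields that $R$ is a finite field $\mathbb{F}_Q$ with $(Q-1)\mid(q-1)$. For $(3)\Rightarrow(1)$, note that $R\not\cong\mathbb{F}_2,\mathbb{F}_3$ forces $Q\ge 4$, so Theorem \ref{q-tcd} applies and writes every matrix over $\mathbb{F}_Q$ as a sum of an idempotent and an invertible $d$-potent, $d=\mathrm{LCM}(Q-1,2)+1$. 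It then remains only to check $d-1\mid q-1$: when $Q$ is odd this is exactly $(Q-1)\mid(q-1)$, and when $Q$ is even, $Q-1$ is odd while $q-1$ is even (recall $q$ is odd), so $(Q-1)\mid(q-1)$ upgrades to $2(Q-1)\mid(q-1)$, and $d-1=2(Q-1)$ in that case. Since $d-1\mid q-1$, an invertible $d$-potent $U$ satisfies $U^{q-1}=I$ and hence is an invertible $q$-potent, giving $(1)$ for every $n$; this also collapses the ``for every/for some'' variants together.

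For the additional equivalence, assume $2\in U(R)$ and $R\not\cong\mathbb{F}_3,\mathbb{F}_5,\mathbb{F}_9$; by the above, conditions $(1)$--$(3)$ now force $R=\mathbb{F}_Q$ with $Q$ an odd prime power, $Q\ge 7$, $Q\ne 9$, and $(Q-1)\mid(q-1)$. To obtain $(3)\Rightarrow(4)$ I would exploit the standard correspondence between idempotents and involutions in characteristic $\ne 2$: given $A\in\mathbb{M}_n(R)$, apply $(1)$ to $\frac12(A+I_n)$ to write $\frac12(A+I_n)=e+U$ with $e^2=e$ and $U$ an invertible $q$-potent; then $A=(2e-I_n)+2U$, where $2e-I_n$ is an involution since $(2e-I_n)^2=I_n$. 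Finally $2U$ is still an invertible $q$-potent because $2^{q-1}=1$ in $\mathbb{F}_Q$ (the multiplicative order of $2$ divides $Q-1$, which divides $q-1$), so that $(2U)^q=2^qU^q=2^qU=2U$. Hence $(4)$ holds, again for every $n$.

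The implication $(4)\Rightarrow(3)$ is the substantive one, and I would handle it by a scalar reduction modelled on Lemma \ref{3q-domain}. Given $(4)$ and $a\in R$, write $aI_n=V+W$ with $V^2=I_n$ and $W$ an invertible $q$-potent. Over the field of fractions $F$ (of characteristic $\ne 2$), the involution $V$ is diagonalizable with eigenvalues in $\{1,-1\}$; conjugating $V$ to diagonal form $CVC^{-1}$ makes $W$ conjugate to $aI_n-CVC^{-1}$, which is diagonal, so its entries are nonzero roots of $x^{q-1}-1$ in $F$. Thus every $a\in R$ has the shape $b+\varepsilon$ with $\varepsilon\in\{1,-1\}$ and $b$ a nonzero $q$-potent; since $x^{q-1}-1$ has at most $q-1$ roots in $F$, this bounds $|R|$ and forces $R$ to be a finite field $\mathbb{F}_Q$. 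Consequently every element of $\mathbb{F}_Q$ is a sum of a $q$-potent and a tripotent (as $\{1,-1\}$ lies among the tripotents). Setting $m-1=\gcd(q-1,Q-1)$, so that the $q$-potents of $\mathbb{F}_Q$ coincide with its $m$-potents and $1<m\le Q$ with $(m-1)\mid(Q-1)$, Lemma \ref{Fq-3n} says this can only occur when $m=Q$, except possibly when $Q\in\{3,5,7,9\}$ with $m=(Q+1)/2$. The cases $Q=3,5,9$ are excluded by hypothesis, while for $Q=7$ the value $m=4$ would need $\gcd(q-1,6)=3$, impossible because $q-1$ is even. Hence $m=Q$, i.e. $(Q-1)\mid(q-1)$, which is $(3)$; this closes the loop $(4)\Rightarrow(3)\Rightarrow(4)$ for every $n$.

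I expect the main obstacle to be this last paragraph: carrying out the scalar reduction so that $W$ itself (not merely its diagonal) is identified as a diagonal matrix, and then correctly threading Lemma \ref{Fq-3n} through the passage from $q$-potents to $m$-potents with the right gcd, including the small parity check that rules out the exceptional value $m=(Q+1)/2$ when $Q=7$. The remainder is bookkeeping around the factor $2$ and the divisibility $d-1\mid q-1$.
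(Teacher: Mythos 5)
Your argument is correct, and for three of the four nontrivial steps it coincides with the paper's proof: $(1)\Rightarrow(2)$ is noted as trivial, $(2)\Leftrightarrow(3)$ is quoted from \cite[Theorem 14]{smz-2021}, $(3)\Rightarrow(1)$ rests on Theorem~\ref{q-tcd} (your explicit verification that $d-1=\mathrm{LCM}(Q-1,2)$ divides $q-1$, splitting into $Q$ odd and $Q$ even and using that $q-1$ is even, is exactly the detail the paper leaves implicit in ``follows at once''), and $(3)\Rightarrow(4)$ is the same halving trick $\frac12(A+I_n)=e+U$, $A=(2e-I_n)+2U$, with $2^{q-1}=1$ guaranteeing $2U$ is still an invertible $q$-potent. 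Where you genuinely diverge is $(4)\Rightarrow(3)$: the paper disposes of it in one line by observing that an involution satisfies $V^3=V$, so condition $(4)$ yields a sum of a tripotent and a $q$-potent and Theorem~\ref{fm} (available since $R\not\cong\mathbb{F}_3,\mathbb{F}_5,\mathbb{F}_9$) gives $(3)$ directly; you instead redo the reduction from scratch, diagonalizing the involution over the fraction field (legitimate, since $x^2-1$ is separable in characteristic $\ne 2$, and in fact cleaner than the triangularization needed for tripotents in Lemma~\ref{3q-domain}), concluding that every $a\in R$ is a nonzero root of $x^{q-1}-1$ shifted by $\pm1$, hence $R$ is a finite field, and then threading Lemma~\ref{Fq-3n} with $m-1=\gcd(q-1,Q-1)$, killing the residual case $Q=7$, $m=4$ by the parity of $q-1$. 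Your route is longer but self-contained for that implication (it bypasses Theorem~\ref{fm}, and with it \cite[Theorem 14]{smz-2021}, relying only on Lemma~\ref{Fq-3n}/Theorem~\ref{SDC1}), whereas the paper's route buys brevity by reusing the machinery already assembled; both are sound.
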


\begin{proof}
The equivalence of statements  (2) and (3) is immediatel from \cite[Theorem 14]{smz-2021}. Further, the implication $(1) \Rightarrow (2)$ is clear, and the implication $(3) \Rightarrow (1)$ follows at once from Theorem~\ref{q-tcd}.

$(3) \Rightarrow (4)$. Let $x\in \mathbb{M}_{n}(R)$. Then, suince $(3) \Rightarrow (1)$, one writes that $(x+1)/2 = e+u$, where $u$ is an invertible $q$-potent matrix and $e^2 = e$. Therefore, $x = 2u+(2e-1)$, where $2u=(2u)^q$ is an invertible element, and $(2e-1)^2 = 1$, as required.

$(4) \Rightarrow (3)$. Follows directly from Theorem~\ref{fm}.
\end{proof}

Specializing $R$ to be a  commutative ring, we obtain a proof of Theorem \ref{t2a}.

\medskip

{\sc Proof of Therorem \ref{t2a}.} The implications (1) $\Rightarrow$ (2), (2) $\Rightarrow$ (4) and (3) $\Rightarrow$ (4) are obvious.

(4) $\Rightarrow$ (5). Take $n$ such that every matrix in $\mathbb{M}_{n}(R)$ is a sum of an idempotent matrix and a $q$-potent matrix. From condition (3) of  Corollary~\ref{q-tcd-eq}, for every prime ideal $I$ of the ring $R$, the quotient ring $R/I$ is a field satisfying the identity $x^{q} = x$. So, $J(R) = Nil(R)$ is true and thus $R$ is semi-regular by \cite[Lemma 16.6]{T02}.

Consider now a maximal indecomposable factor $S=R/I$ of the ring $R$. By using \cite[Remark 29.7(2)]{T02}, \cite[Proposition 32.2]{T02} and Corollary~\ref{q-tcd-eq}, the factor-ring $S$ is a local ring, the quotient $S/J(S)$ is a field of characteristic $p$ in which the identity $x^{q} = x$ holds, and $J(S)$ is a nil-ideal. We next wish to prove that $J(S) = 0$. To achieve the claim, we assume on the contrary that $J(S) \neq 0$. However, if $pS \neq J(S)$, then $J(S/pS)\neq 0$ and so there exists nonzero $a \in J(S/pS)$ with identity $a^{2} = 0$. By hypothesis, we write that
$$
a I_{n} = E_{1} + E_{2}, E_{1}^{2} = E_{1}, E_{2}^{q}=E_{2},
$$
for some $E_{1}, E_{2} \in \mathbb{M}_n(S/pS)$. Since it is well known that every idempotent matrix is diagonalizable over a local commutative ring, it can be assumed without loss of generality that the matrices $E_{1}, E_{2}$ are of diagonal form. Indeed,  since $a \neq 0$, it must be that $E_{1}\neq 0$. Therefore, for some element $b$ on the main diagonal of the matrices $ E_{2}$, the equalities $a=1+b, b^{q-1}=1$ hold. Then
$$
0=a^{p} = (1+b)^{p} = 1+b^p,
$$
and hence $b^p=-1.$ Since by condition $(q-1, p)=1$, the equality $1=t_1(q-1)+t_2p$ holds for some integers $t_1$ and $t_2$. But since $t_2$ is  odd, we then have $b=(b^{(q-1)})^{t_1}(b^p)^{t_2}=-1$ which yields $a=0$, the desired contradiction.

If now $pS = J(S)$, then $J(S) \not = 0$ implies $pS \neq p^{2}S$. We put $a=p+ p^2S\in S/p^2S$. By hypothesis, there exists $E_{1}, E_{2} \in \mathbb{M}_{n}(S/p^2S)$ such that
$$
 aI_{n} = E_{1} + E_{2}, E_{1}^{2} = E_{1}, E_{2}^{q}=E_{2}.
$$

From $a^{2}=0$, we readily see that $$a I_{n} - E_{2} = E_{1} = E_{1}^{q} = - E_{2}^q+qaI_{n}E_{2}^{q-1}=
-E_{2}+qaI_{n}E_{2}^{q-1}.$$ Then $a I_{n}=qaI_{n}E_{2}^{q-1}$. It once again can be assumed without loss of generality that the matrices $E_{1}, E_{2}$ are of diagonal form. Therefore, for some element $b$ on the main diagonal of the matrices $ E_{2}$, the equalitiy $a=aqb^{q-1}$ holds. Since $b^q=b$, we obtain $(q-1)ab=0$ and since $(q-1)\in U(S/p^2S)$, we arrive at $ab=0$. Consequently, $a=aqb^{q-1}=0$, which is a new contradiction. Thus, finally, $J(S)= 0$, which substantiates our claim.

Furthermore, by virtue of the above reasoning, all Pierce stalks of $R$ are isomorphic to the finite fields $\mathbb{F}_{t}$ with $t-1\mid q-1$. Invoking \cite[Corollary 11.10]{T02}, the ring $R$ has identity $x^q=x$.

(5) $\Rightarrow$ (1), (5) $\Rightarrow$ (3). Let $n$ be an arbitrary natural number and take $A\in \mathbb{M}_{n} (R)$. Consider the subring $S$ of the ring $R$, generated by the elements of the matrix $A$. One straightforwardly verifies that the ring $S$ is finite. Hence, one decomposes $S\cong P_{1} \times \ldots \times  P_{m}$, for some finite fields $P_{i}$ with identities $x^{q}=x$ and for any $1\leq i \leq m$. Now, with Corollary~\ref{q-tcd-eq} at hand, every $P_{i}$ satisfy the conditions of points (1) and (3), whence so does $S$.
\qed

We observe that Theorem \ref{t2a} requires the condition $q-1 \in U(R)$, which is vital to obtaining this result. As a matter of fact, let us take an odd prime $p$ and consider the ring $\mathbb{Z}/{p^{2}\mathbb{Z}}$. So, we come to the following assertion.

\begin{lemma}
\label{Z-p2Z-idem-q}
Suppose that $p$ is an odd prime and $q \in \mathbb{N}$. Then the following two conditions are equivalent:
\begin{enumerate}
    \item [(1)] Every element of $\mathbb{Z}/ {p^{2}\mathbb{Z}}$ is a sum of an idempotent and a $q$-potent.
    \item [(2)] $p(p-1) \mid q-1$.
\end{enumerate}
\end{lemma}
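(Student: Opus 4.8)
The plan is to work in the finite commutative ring $\mathbb{Z}/p^2\mathbb{Z}$, whose unit group is cyclic of order $p(p-1)$, and to analyse directly when every element can be written as $e+f$ with $e\in\{0,1\}$ an idempotent and $f$ a $q$-potent. The nonzero $q$-potents are exactly the elements $f$ with $f^{q-1}=1$; these form the unique subgroup $H$ of $U(\mathbb{Z}/p^2\mathbb{Z})$ of order $\gcd(q-1,p(p-1))$, so the set $C_q$ of $q$-potents is $\{0\}\cup H$. The condition in (1) is then equivalent to $C_q\cup(C_q+1)=\mathbb{Z}/p^2\mathbb{Z}$, i.e. $(\{0\}\cup H)\cup(\{1\}\cup(H+1))$ exhausts all $p^2$ residues.

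First I would prove (2) $\Rightarrow$ (1): if $p(p-1)\mid q-1$ then $|H|=p(p-1)$, so $H=U(\mathbb{Z}/p^2\mathbb{Z})$ and $C_q=\{0\}\cup U(\mathbb{Z}/p^2\mathbb{Z})$ consists of $0$ together with everything coprime to $p$. The elements not in $C_q$ are precisely the nonzero multiples of $p$, and each such element $kp$ ($1\le k\le p-1$) equals $1+(kp-1)$ with $kp-1$ a unit, hence in $C_q$; so every element is a sum of an idempotent (namely $1$) and a $q$-potent, giving (1).

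For the converse (1) $\Rightarrow$ (2), the key counting step is a cardinality bound: $|C_q\cup(C_q+1)|\le 2|C_q|=2(|H|+1)=2\gcd(q-1,p(p-1))+2$, and for this to be at least $p^2$ we need $\gcd(q-1,p(p-1))\ge (p^2-2)/2$. Since this gcd divides $p(p-1)$ and $p(p-1)<p^2$, while the only divisor of $p(p-1)$ that is $\ge(p^2-2)/2$ is $p(p-1)$ itself (the next largest divisor is at most $p(p-1)/2<(p^2-2)/2$ for $p$ odd — here I would check the small case $p=3$ separately, where $p(p-1)=6$ and its divisors are $1,2,3,6$, and $(p^2-2)/2=3.5$, forcing the gcd to be $6$), we conclude $\gcd(q-1,p(p-1))=p(p-1)$, that is, $p(p-1)\mid q-1$. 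Strictly one must also rule out the borderline where the union has size exactly $p^2$ with overlaps forced to be small; but the divisor gap argument already forces $|H|=p(p-1)$, so there is room to spare and the borderline does not actually occur.

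The main obstacle I anticipate is making the divisor-gap argument airtight for all odd primes $p$, i.e. verifying that no proper divisor $d\mid p(p-1)$ satisfies $2d+2\ge p^2$; this reduces to showing every proper divisor of $p(p-1)$ is at most $p(p-1)/2$ and that $p(p-1)+2<p^2$, i.e. $p>2$, which is immediate, but the edge cases $p=3,5$ should be dispatched by hand to be safe. Once the counting bound pins down $|H|$, both implications fall out cleanly, and no appeal to the heavier finite-field machinery (Theorem~\ref{SDC1}, Lemma~\ref{Fq-3n}) is needed — the argument is entirely elementary in the cyclic group $U(\mathbb{Z}/p^2\mathbb{Z})$.
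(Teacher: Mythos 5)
Your proposal is correct and follows essentially the same route as the paper: identify the $q$-potents with $\{0\}\cup H$ for the subgroup $H$ of the cyclic group $U(\mathbb{Z}/p^2\mathbb{Z})$ of order $\gcd(q-1,p(p-1))$, prove $(2)\Rightarrow(1)$ by writing each nonzero multiple of $p$ as $1$ plus a unit, and prove $(1)\Rightarrow(2)$ by the counting bound $2(\gcd(q-1,p(p-1))+1)\le p(p-1)+2<p^2$ when the gcd is a proper divisor. The paper's proof is the same cardinality argument (phrased via $q'$ with $q'-1\mid p(p-1)$), so no substantive difference.
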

\begin{proof}
$(1) \Rightarrow (2)$. Suppose that every element of $\mathbb{Z}/ {p^{2}\mathbb{Z}}$ is a sum of an idempotent and a $q$-potent. Since $U(\mathbb{Z}/ {p^{2}\mathbb{Z}})$ is a cyclic group of order $p(p-1)$, there exists $q' \in \mathbb{N}$ such that $q' -1 \mid p(p-1)$ and the set of $q$-potents of $\mathbb{Z}/ {p^{2}\mathbb{Z}}$ coincides with the set of $q'$-potents. If $q'-1 = p(p-1)$, then $p(p-1) \mid q-1$, as required. Otherwise, the inequality $q'-1 \leq \frac{p(p-1)}{2}$ holds, and cardinality of the set of elements that are a sum of an idempotent and a $q'$-potent is not greater than the number $2 (1 + \frac{p(p-1)}{2}) < p^{2}$.

$(2) \Rightarrow (1)$. It is clear that every element of $\mathbb{Z}/ {p^{2}\mathbb{Z}}$ is either a $(p(p-1)+1)$-potent or a sum of $1$ and a $(p(p-1)+1)$-potent, as required.
\end{proof}

The next example will substantiate the above observations.

\begin{example}
Suppose that $p$ is an odd prime and $q \in \mathbb{N}$. Then the following two conditions are equivalent.
\begin{enumerate}
    \item [(1)] Every element of  $\mathbb{M}_{2}(\mathbb{Z}/ {p^{2}\mathbb{Z}})$ is a sum of an idempotent matrix and a $q$-potent matrix.
    \item [(2)] $p(p-1) \mid q-1$.
\end{enumerate}
\end{example}
\begin{proof}
$(1) \Rightarrow (2)$. Take $a \in \mathbb{Z}/ {p^{2}\mathbb{Z}}$. Then there exist matrices $E_{1}, E_{2} \in \mathbb{M}_{2}(\mathbb{Z}/ {p^{2}\mathbb{Z}})$, such that $aI_{2} = E_{1} + E_{2}$, $E_{1}^{2} = E_{1}$, $E_{2}^{q} = E_{2}$. Since it is well known that every idempotent matrix is diagonalizable over a local commutative ring, it can be assumed without loss of generality that the matrices $E_{1}, E_{2}$ are of diagonal form. We, therefore, can conclude with the aid of Lemma \ref{Z-p2Z-idem-q} that every element of $\mathbb{Z}/ {p^{2}\mathbb{Z}}$ is a sum of an idempotent and a $q$-potent, and $p(p-1) \mid q-1$ .

$(2) \Rightarrow (1)$. Take $A \in \mathbb{M}_{2}(\mathbb{Z}/ {p^{2}\mathbb{Z}})$ and let $\pi:\, \mathbb{M}_{2}(\mathbb{Z}/ {p^{2}\mathbb{Z}}) \rightarrow \mathbb{M}_{2}(\mathbb{Z}/ {p\mathbb{Z}})$ denote the reduction map. The matrix $\pi(A)$ is similar to its rational canonical form. Then the standard theory of determinants will imply that every invertible matrix in $\mathbb{M}_{2}(\mathbb{Z}/{p\mathbb{Z}})$ can be lifted upon $\pi$. Therefore, without loss of generality, we may assume that $A$ is presentable in one of the two following forms:
$
\begin{pmatrix}
a & 0  \\
0 & b
\end{pmatrix}
+j
$
or
$
\begin{pmatrix}
0 & a  \\
1 & b
\end{pmatrix}
+j
$,
where $a,b \in \mathbb{Z}/ {p^{2}\mathbb{Z}}$ and $j \in J(\mathbb{M}_{2}(\mathbb{Z}/ {p^{2}\mathbb{Z}}))$. We now distinguish three cases as follows.

{\it Case 1:} Suppose that
$A =
\begin{pmatrix}
0 & a  \\
1 & b
\end{pmatrix}
$.

If $b \not\equiv 1 \mod p^{2}$, then
$$
\begin{pmatrix}
0 & a  \\
1 & b
\end{pmatrix}
=
\begin{pmatrix}
0 & 0  \\
1 & 1
\end{pmatrix}
+
\begin{pmatrix}
0 & a  \\
0 & b-1
\end{pmatrix}.
$$
Since $b-1 \in U(\mathbb{Z}/ {p^{2}\mathbb{Z}})$, we have $(b-1)^{p(p-1)} = 1$ and $b$ is annihilated by the polynomial $x^{p(p-1)}-1$. In view of Lemma \ref{tech}, the matrix
$
\begin{pmatrix}
0 & a  \\
0 & b-1
\end{pmatrix}
$
is annihilated by $x (x^{p(p-1)}-1)$, i.e., it is a $(p(p-1) + 1)$-potent.

If $b \equiv 1 \mod p^{2}$, then
$$
\begin{pmatrix}
0 & a  \\
1 & b
\end{pmatrix}
=
\begin{pmatrix}
1 & 0  \\
1 & 0
\end{pmatrix}
+
\begin{pmatrix}
-1 & a  \\
0 & b
\end{pmatrix}.
$$
Since $x^{p(p-1)} - 1 = (x + 1) g(x)$ for some polynomial $g(x)$ over $\mathbb{Z}$ and $b+1$ is invertible in $\mathbb{Z}/ {p^{2}\mathbb{Z}}$, we conclude that $b$ is annihilated by $g(x)$. In virtue of Lemma \ref{tech}, the matrix
$
\begin{pmatrix}
-1 & a  \\
0 & b
\end{pmatrix}
$
is annihilated by $(x+1) g(x)$, i.e., it is a $(p(p-1) + 1)$-potent.

{\it Case 2:} Suppose that
$A =
\begin{pmatrix}
p k & a  \\
1 + p m & b
\end{pmatrix}
$. Take $u = (1 + p m)^{-1}$. We have
$$
\begin{pmatrix}
1 & 0  \\
0 & u
\end{pmatrix}
\begin{pmatrix}
p k & a  \\
1 + p m & b
\end{pmatrix}
\begin{pmatrix}
1 & 0  \\
0 & u^{-1}
\end{pmatrix}
=
\begin{pmatrix}
p k & a u^{-1} \\
1  & b
\end{pmatrix}.
$$

Next,
$$
\begin{pmatrix}
1 & -pk  \\
0 & 1
\end{pmatrix}
\begin{pmatrix}
p k & a u^{-1} \\
1  & b
\end{pmatrix}
\begin{pmatrix}
1 & pk  \\
0 & 1
\end{pmatrix}
=
\begin{pmatrix}
0 & a u^{-1} - b p k \\
1  & b + pk
\end{pmatrix}.
$$

Thus Case 2 is reduced to Case 1.

{\it Case 3:} Suppose that
$A =
\begin{pmatrix}
a & p k  \\
p m & b
\end{pmatrix}
$.

If $a$ and $b$ are both units, then
$$
\begin{pmatrix}
a & p k  \\
p m & b
\end{pmatrix}
=
\begin{pmatrix}
0 & 0  \\
0 & 0
\end{pmatrix}
+
\begin{pmatrix}
a & p k  \\
p m & b
\end{pmatrix}.
$$

It is enough to show that
$
\begin{pmatrix}
a & p k  \\
p m & b
\end{pmatrix}^{p(p-1)}
=
I_{2}
$.

If $a=b$, then
$$
\begin{pmatrix}
a & p k  \\
p m & a
\end{pmatrix}^{p(p-1)}
=
\left(
aI_{2}
+
\begin{pmatrix}
0 & p k  \\
p m & 0
\end{pmatrix}
\right)^{p(p-1)}
=
a^{p(p-1)} I_{2} +
\frac{p(p-1)}{2}
a^{p(p-1)-1}
\begin{pmatrix}
0 & p k  \\
p m & 0
\end{pmatrix} = I_{2},
$$
because $2 \mid (p-1)$.

If $a \not = b$ and $a-b \in U(\mathbb{Z}/p^{2}\mathbb{Z}),$ then simple induction shows that
$$
\begin{pmatrix}
a & p k  \\
p m & b
\end{pmatrix}^{r}
=
\begin{pmatrix}
a^{r} & p k  \sum\limits_{i=0}^{r-1} a^{i} b^{r-i}  \\
p m \sum\limits_{i=0}^{r-1} a^{i} b^{r-i} & b^{r}
\end{pmatrix}
=
\begin{pmatrix}
a^{r} & p k  \frac{a^{r}-b^{r}}{a-b}  \\
p m \frac{a^{r}-b^{r}}{a-b} & b^{r}
\end{pmatrix}
$$

Since $a$ and $b$ are units, we have
$
\begin{pmatrix}
a & p k  \\
p m & b
\end{pmatrix}^{p(p-1)}
= I_{2}
$.

If $a-b \in p\mathbb{Z}/p^{2}\mathbb{Z}$ and $a-b\neq 0$ then
$$
0 = a^{p(p-1)} - b^{p(p-1)} = (a-b)( \sum_{i=0}^{p(p-1)-1} a^ib^{p(p-1)-1-i} ).
$$
Thus $\sum_{i=0}^{p(p-1)-1} a^ib^{p(p-1)-1-i}\in  pZ/p^{2}Z$ and
$$
\begin{pmatrix}
a & p k  \\
p m & b
\end{pmatrix}^{p(p-1)}
=
\begin{pmatrix}
a^{p(p-1)} & p k  \sum\limits_{i=0}^{p(p-1)-1} a^{i} b^{p(p-1)-1-i}  \\
p m \sum\limits_{i=0}^{p(p-1)-1} a^{i} b^{p(p-1)-1-i} & b^{p(p-1)}
\end{pmatrix}
=I_{2}.
$$

If, however, $a$ and $b$ are both not units, then
$$
\begin{pmatrix}
a & p k  \\
p m & b
\end{pmatrix}
=
\begin{pmatrix}
1 & 0  \\
0 & 1
\end{pmatrix}
+
\begin{pmatrix}
a -1 & p k  \\
p m & b - 1
\end{pmatrix}
$$
and
$
\begin{pmatrix}
a -1 & p k  \\
p m & b - 1
\end{pmatrix}
$
is a $(p(p-1)+1)$-potent, as we saw earlier.

Finally, if $b$ and $a-1$ are units, then
$$
\begin{pmatrix}
a & p k  \\
p m & b
\end{pmatrix}
=
\begin{pmatrix}
1 & p k  \\
p m & 0
\end{pmatrix}
+
\begin{pmatrix}
a-1 & 0  \\
0 & b
\end{pmatrix}
$$
is a sum of idempotent and a $(p(p-1) + 1)$-potent. We thus obtained a similar decomposition if $a$ and $b-1$ are units, as expected.
\end{proof}

\medskip
We now provide two final lemmas.
\begin{lemma}\label{Fq-tcd}
Let $q > 1$ be an integer, $\mathbb{F}_{q}$ a finite field and $n\in \mathbb{N}$. Then the following two statements are equivalent.
  \begin{enumerate}
   \item[(1)] Every element of $\mathbb{F}_{q}$ admits an $n$-torsion clean presentation.

   \item[(2)] $(q-1) \,|\, n$.
  \end{enumerate}
\end{lemma}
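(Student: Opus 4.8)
The plan is to reduce everything to the elementary fact that the only idempotents of a field are $0$ and $1$. Hence an element $r\in\mathbb{F}_{q}$ admits an $n$-torsion clean presentation precisely when $r$ or $r-1$ lies in the set
\[ T=\{\,u\in\mathbb{F}_{q}^{*}:u^{n}=1\,\}, \]
which is the unique subgroup of the cyclic group $\mathbb{F}_{q}^{*}$ of order $g:=\gcd(n,q-1)$. Thus statement (1) is equivalent to the set equality $\mathbb{F}_{q}=T\cup(T+1)$, and the lemma becomes a short counting problem about this union.

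For $(2)\Rightarrow(1)$, I would observe that if $(q-1)\mid n$, then $u^{n}=1$ for every $u\in\mathbb{F}_{q}^{*}$ (since $\mathbb{F}_{q}^{*}$ has exponent $q-1$), so $T=\mathbb{F}_{q}^{*}$. Then every nonzero element is of the form $0+u$ with $u\in T$, while $0=1+(-1)$ with $-1\in\mathbb{F}_{q}^{*}=T$; this yields an $n$-torsion clean presentation of each element of $\mathbb{F}_{q}$.

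For $(1)\Rightarrow(2)$, I would use the cardinality bound
\[ q=|\mathbb{F}_{q}|=|T\cup(T+1)|\le 2|T|=2g. \]
Since $g$ divides $q-1$, either $g=q-1$ or else $g\le(q-1)/2$; the latter forces $q\le q-1$, which is absurd. Hence $g=q-1$, i.e. $(q-1)\mid n$, as required.

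There is no genuinely hard step here once the idempotents of $\mathbb{F}_{q}$ are pinned down; the core is a one-line counting estimate. The only point needing a moment's care is the handling of the element $0$ (and, implicitly, of characteristic $2$, where $-1=1$) in the direction $(2)\Rightarrow(1)$, but this is routine. One could alternatively phrase the argument via the potent sets $C_{n+1}$ used earlier in the paper, noting $T=C_{n+1}\setminus\{0\}$, though the direct computation above seems cleanest.
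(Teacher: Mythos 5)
Your proof is correct, and it takes a cleaner route than the paper's. The paper proves $(1)\Rightarrow(2)$ by viewing an $n$-torsion clean presentation as a sum of a tripotent and an $(n+1)$-potent and invoking Lemma~\ref{Fq-3n}: that lemma (which rests on the character-sum result, Theorem~\ref{SDC1}) forces either $(q-1)\mid n$ or $q\in\{3,5,7,9\}$ with $\gcd(n,q-1)=\frac{q-1}{2}$, and the exceptional cases are then killed by counting that at most $2\cdot\frac{q-1}{2}=q-1<q$ elements can be written as idempotent plus invertible $\frac{q+1}{2}$-potent. You observe that no reduction is needed: since the only idempotents of a field are $0$ and $1$, condition (1) is literally the covering $\mathbb{F}_q=T\cup(T+1)$ with $|T|=\gcd(n,q-1)$, and the same counting bound $q\le 2\gcd(n,q-1)$, combined with $\gcd(n,q-1)\mid(q-1)$, already forces $\gcd(n,q-1)=q-1$. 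In effect you exploit that only \emph{two} translates of $T$ occur here (the tripotent value $-1$ never arises), which is exactly why pure counting suffices, whereas in the genuinely tripotent setting of Lemma~\ref{Fq-3n} three translates occur and the deep consecutive-non-square input is unavoidable. What the paper's route buys is only the reuse of an already-established lemma; your argument is self-contained, works uniformly in all characteristics (including the small fields $\mathbb{F}_3,\mathbb{F}_5,\mathbb{F}_7,\mathbb{F}_9$, which never need separate treatment), and makes the easy direction explicit via $0=1+(-1)$, where the paper merely says ``straightforward.''
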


\begin{proof}
$(1) \Rightarrow (2)$. In view of Lemma~\ref{Fq-3n} it is necessary to consider only the fields $\mathbb{F}_{3},\mathbb{F}_{5}, \mathbb{F}_{7}, \mathbb{F}_{9}$. Assume by  way of contradiction that each element of these fields is $n$-torsion clean, but $n$ is not divisible by $(q-1)$. To obtain the desired contradiction, we first observe the obvious fact that the set of $(n+1)$-potents in the field $\mathbb{F}_{q}$ coincides with the set of $(1 + \mathrm{GCD}(n, q-1))$-potents. But $\frac{(q-1)}{2} \,|\, n$ in virtue of Lemma~\ref{Fq-3n}, whence $\mathrm{GCD}(n, q-1) = \frac{(q-1)}{2}$ and so in the field every element is a sum of an idempotent and of an invertible $\left(\frac{q+1}{2}\right)$-potent. Since the number of $\left(\frac{q+1}{2}\right)$-potents is exactly $\frac{(q-1)}{2}$, it follows that the number of $n$-torsion clean elements does not exceed $q-1$, which is impossible, as desired .

$(2) \Rightarrow (1)$. Straightforward.
\end{proof}

\begin{lemma}\label{tcd-domain}
Let $q > 1$ be an integer and let $R$ be an integral ring. If, for some $n \in \mathbb{N}$, each matrix from the ring $\mathbb{M}_{n}(R)$ is $n$-torsion clean, then $R$ is a finite field and $(|R|-1) \,|\, n$.
\end{lemma}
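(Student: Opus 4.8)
The plan is to run the diagonalisation argument from the proof of Lemma~\ref{3q-domain}, now for decompositions $a = e + u$ with $e$ idempotent and $u$ an $n$-torsion unit in place of the tripotent-plus-$q$-potent decomposition used there, and then to invoke Lemma~\ref{Fq-tcd}. So suppose that every matrix in $\mathbb{M}_n(R)$ is $n$-torsion clean, let $F$ be the field of fractions of $R$, and fix $a \in R$. Writing $aI_n = E + U$ with $E^2 = E$ and $U^n = I_n$, I would first diagonalise $E$ over $F$: the minimal polynomial of an idempotent matrix divides the separable polynomial $x^2 - x$, so there is $C \in \mathrm{GL}_n(F)$ with $CEC^{-1} = \diag(\varepsilon_1, \dots, \varepsilon_n)$, each $\varepsilon_i \in \{0,1\}$. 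Conjugating the identity $(aI_n - E)^n = U^n = I_n$ then gives $\diag\bigl((a-\varepsilon_1)^n, \dots, (a-\varepsilon_n)^n\bigr) = I_n$, so $(a - \varepsilon_i)^n = 1$ for every $i$; since $n \geq 1$ this forces $a^n = 1$ or $(a-1)^n = 1$. Equivalently, each $a \in R$ has an $n$-torsion clean decomposition, namely $a = 0 + a$ or $a = 1 + (a-1)$.

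Next I would extract finiteness. Every element of $R \subseteq F$ is a root of the fixed nonzero polynomial $(x^n - 1)\bigl((x-1)^n - 1\bigr) \in F[x]$, which has degree $2n$; hence $|R| \leq 2n$, so $R$ is a finite integral ring and therefore a finite field. The first paragraph then says that every element of the finite field $R$ admits an $n$-torsion clean presentation, so Lemma~\ref{Fq-tcd} applies and yields $(|R| - 1) \mid n$, which is exactly the claim.

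The argument is essentially formal once Lemma~\ref{Fq-tcd} is in hand; the only point that requires a little care is the reduction from the matrix hypothesis to the scalar statement in the first paragraph, where one must diagonalise $E$ honestly over $F$ (not merely over an algebraic closure) so that $aI_n - E$ is literally a diagonal matrix and its $n$-th power can be read off entrywise. After that, the counting bound on the number of roots of a fixed nonzero polynomial over the domain $F$ does the remaining work, and I do not expect any genuine obstacle.
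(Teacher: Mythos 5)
Your proof is correct and takes essentially the same route as the paper: reduce the matrix hypothesis to the scalar statement by diagonalising the idempotent over the field of fractions, deduce that $R$ is a finite field, and then apply Lemma~\ref{Fq-tcd} to obtain $(|R|-1)\mid n$. The only difference is presentational: the paper gets finiteness by citing Lemma~\ref{3q-domain} (which applies because an idempotent is a tripotent and an $n$-torsion unit is an $(n+1)$-potent) and simply asserts that every element of $R$ is then $n$-torsion clean, whereas you carry out that diagonalisation and the root-counting bound for $(x^n-1)\bigl((x-1)^n-1\bigr)$ explicitly, which in fact makes precise the step the paper leaves implicit.
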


\begin{proof}
By Lemma~\ref{3q-domain}, the  ring  $R$ is necessarily a finite field. Moreover, all elements of $R$ admit an $n$-torsion clean presentation. But now Lemma~\ref{Fq-tcd} assures that $(|R|-1) \,|\, n$, as promised.
\end{proof}

At this point  the proof of  Theorem \ref{q-nt} is obtained by  combining Corollary~\ref{q-tcd-eq} with Lemma~\ref{Fq-tcd}. Indeed, we can extract further  corollaries of Theorem \ref{q-tcd} and Corollary \ref{q-tcd-eq}.

\begin{cor}\label{F2k}
Suppose $k,n > 1$ are positive integers Then the ring $\mathbb{M}_{n}(\mathbb{F}_{2^{k}})$ is $d$-torsion clean, where $d \in \{ 2^{k}-1, 2^{k+1}-2 \}$.
\end{cor}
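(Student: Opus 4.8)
The plan is to obtain Corollary \ref{F2k} as an immediate consequence of Theorem \ref{q-tcd}. Since $k>1$, the field $\mathbb{F}_{2^{k}}$ has $Q=2^{k}\ge 4$ elements, so Theorem \ref{q-tcd} applies with $F=\mathbb{F}_{2^{k}}$. The first step is the (purely arithmetical) computation of the exponent occurring there: because $Q-1=2^{k}-1$ is odd, we have $\mathrm{LCM}(2^{k}-1,2)=2(2^{k}-1)=2^{k+1}-2$, and hence $d=\mathrm{LCM}(Q-1,2)+1=2^{k+1}-1$. Theorem \ref{q-tcd} then says that every matrix $A\in\mathbb{M}_{n}(\mathbb{F}_{2^{k}})$ can be written as $A=e+u$ with $e^{2}=e$ and $u$ an invertible matrix satisfying $u^{2^{k+1}-1}=u$.

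The second step is to convert the ``invertible $(2^{k+1}-1)$-potent'' condition on $u$ into a torsion condition: multiplying $u^{2^{k+1}-1}=u$ on the left by $u^{-1}$ gives $u^{2^{k+1}-2}=I_{n}$, so $u$ is a $(2^{k+1}-2)$-torsion unit of $\mathbb{M}_{n}(\mathbb{F}_{2^{k}})$. Thus every matrix in $\mathbb{M}_{n}(\mathbb{F}_{2^{k}})$ has a $(2^{k+1}-2)$-torsion clean decomposition, i.e.\ $\mathbb{M}_{n}(\mathbb{F}_{2^{k}})$ is $(2^{k+1}-2)$-torsion clean, which already yields the assertion since $2^{k+1}-2\in\{2^{k}-1,\,2^{k+1}-2\}$. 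One may additionally record that $2^{k}-1$ divides $2^{k+1}-2$, so any $(2^{k}-1)$-torsion clean presentation (available, for instance, for scalar matrices, cf.\ Lemma~\ref{Fq-tcd}) is automatically a $(2^{k+1}-2)$-torsion clean one; this explains why these two values are the only ones that need to be listed.

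I do not expect a genuine obstacle here: the substantive content is already packaged inside Theorem \ref{q-tcd}, whose proof in turn rests on Lemma~\ref{q-tcd0} for $Q\ge 5$ and on the delicate parity-sensitive constructions of Lemmas~\ref{F4-3} and \ref{F4-2} for $Q=4$. The only points demanding any care are the $\mathrm{LCM}$ computation (which, because $Q$ is even, forces $d=2^{k+1}-1$ rather than the ``trivial'' value $d=Q$) and the elementary passage from the relation $u^{d}=u$ for an invertible matrix to the torsion relation $u^{d-1}=I_{n}$.
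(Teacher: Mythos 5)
Your argument establishes only half of what the corollary asserts. Recall that in this paper (Definition \ref{tors}, and the same convention for $n$-torsion clean taken from \cite{DMa}, as used in Theorems \ref{q-nt} and \ref{th-wtcd-eq}) the phrase ``$d$-torsion clean'' carries a minimality requirement: $d$ is the \emph{least} natural number such that every element is a sum of an idempotent and a $d$-torsion unit. So Corollary \ref{F2k} is not the claim ``there is some $d$ in $\{2^{k}-1,2^{k+1}-2\}$ for which every matrix has a $d$-torsion clean decomposition''; it is the claim that the minimal such $d$ lies in this two-element set. Your computation via Theorem \ref{q-tcd} (with $Q=2^{k}\ge 4$, $\mathrm{LCM}(2^{k}-1,2)+1=2^{k+1}-1$, and the passage from $u^{2^{k+1}-1}=u$ with $u$ invertible to $u^{2^{k+1}-2}=I_n$) correctly yields the upper bound $d\le 2^{k+1}-2$, and this part coincides with the second half of the paper's proof. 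But you then conclude ``which already yields the assertion since $2^{k+1}-2\in\{2^{k}-1,2^{k+1}-2\}$'', which silently drops the minimality; with the paper's definition, $d$ could a priori be some smaller number different from $2^{k}-1$, and nothing in your argument rules that out.

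The missing ingredient is the divisibility constraint on the minimal $d$: if every matrix in $\mathbb{M}_{n}(\mathbb{F}_{2^{k}})$ is $d$-torsion clean, then by Lemma \ref{tcd-domain} (which rests on Lemma \ref{3q-domain} and Lemma \ref{Fq-tcd}, applied essentially to scalar matrices) one gets $(2^{k}-1)\mid d$. Combining $(2^{k}-1)\mid d$ with $d\le 2^{k+1}-2=2(2^{k}-1)$ forces $d\in\{2^{k}-1,\,2^{k+1}-2\}$, which is exactly the paper's two-line proof. Your closing remark that $2^{k}-1$ divides $2^{k+1}-2$ points in the wrong direction and does not repair this: what is needed is that $2^{k}-1$ divides the (unknown, minimal) $d$, not that it divides the upper bound you produced.
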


\begin{proof}
It follows from Lemma~\ref{tcd-domain} that $2^{k}-1 \mid d$. But Theorem~\ref{q-tcd} enables us to deduce  that $d \leq 2^{k+1}-2$, as requested.
\end{proof}

\begin{cor}\label{q-wtcd-eq}
Let $p$ be an odd prime, and $q = p^{\alpha}$ for some integer $\alpha\geq 0$. If $R$ is an integral ring of odd  characteristic and $|R| > 9$, then the following four conditions are equivalent.
\begin{enumerate}
    \item [(1)] For every (for some) $n \in \mathbb{N}$, each matrix in the matrix ring $\mathbb{M}_{n}(R)$ can be expressed as a sum of an idempotent matrix and an invertible $q$-potent matrix.
    \item [(2)] For every (for some) $n \in \mathbb{N}$, each matrix in the matrix ring $\mathbb{M}_{n}(R)$ can be expressed as a sum or a difference of an invertible $q$-potent matrix and an idempotent matrix.
    \item [(3)] For every (for some) $n \in \mathbb{N}$, each matrix in the matrix ring $\mathbb{M}_{n}(R)$ can be expressed as a sum of a tripotent matrix and a $q$-potent matrix.
    \item[(4)] $R$ is a finite field with $(|R|-1) \mid q-1$.
\end{enumerate}
\end{cor}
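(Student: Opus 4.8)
The plan is to establish the four equivalences as a single cycle $(1) \Rightarrow (2) \Rightarrow (3) \Rightarrow (4) \Rightarrow (1)$, drawing only on Lemma~\ref{3q-domain} and Corollary~\ref{q-tcd-eq}. The role of the hypothesis $|R| > 9$ is to guarantee that $R$ is not isomorphic to any of $\mathbb{F}_{2}$, $\mathbb{F}_{3}$, $\mathbb{F}_{5}$, $\mathbb{F}_{7}$ or $\mathbb{F}_{9}$, so that both of those tools apply to $R$ with the odd integer $q = p^{\alpha}$ (we take $\alpha \ge 1$, so that $q>1$; the value $q=1$ is degenerate and is excluded).

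For $(1) \Rightarrow (2)$ there is nothing to prove: a decomposition of a matrix as a sum of an idempotent and an invertible $q$-potent is, in particular, a decomposition as a sum (hence a ``sum or a difference'') of an invertible $q$-potent and an idempotent. The step $(2) \Rightarrow (3)$ rests on one elementary remark: if $e^{2}=e$, then $(-e)^{3} = -e$, so $e$ and $-e$ are both tripotents, while an invertible $q$-potent is in particular a $q$-potent. Consequently, a representation $A = u+e$ or $A = u-e = u + (-e)$, with $u$ an invertible $q$-potent and $e$ idempotent, is in either case a representation of $A$ as a sum of a tripotent matrix and a $q$-potent matrix; this yields $(3)$. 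For $(3) \Rightarrow (4)$ I would invoke Lemma~\ref{3q-domain}: statement $(3)$ already forces $R$ to be a finite field, and since $|R| > 9$ puts $R$ outside the exceptional list $\{3,5,7,9\}$, part~(2) of that lemma gives $(|R|-1) \mid (q-1)$, which is $(4)$. Finally, $(4) \Rightarrow (1)$ is precisely the implication ``$(3) \Rightarrow (1)$'' of Corollary~\ref{q-tcd-eq}, whose statement $(3)$ is our $(4)$ and whose statement $(1)$ is our $(1)$; it applies because $|R| > 9$ rules out $\mathbb{F}_{2}$ and $\mathbb{F}_{3}$.

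No separate attention is needed for the ``for every (for some) $n$'' quantifiers, since through the cycle above each of $(1)$, $(2)$, $(3)$ becomes equivalent to the $n$-free condition $(4)$, so the ``for every'' and ``for some'' forms coincide automatically. I do not anticipate a genuine obstacle here: the only points that call for care are the sign bookkeeping in $(2) \Rightarrow (3)$ (the crux being the identity $(-e)^{3} = -e$), and the check that the single hypothesis $|R| > 9$ meets the exclusions required by Lemma~\ref{3q-domain} and Corollary~\ref{q-tcd-eq} at once --- which it does. One could alternatively route $(3) \Leftrightarrow (4)$, and the placement of $(2)$, through Theorem~\ref{fm} (via its statements $(3)$, $(4)$, $(7)$), but the cycle above is the more economical path.
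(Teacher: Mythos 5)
Your proof is correct and follows essentially the same route as the paper, which simply cites Lemma~\ref{3q-domain} and Corollary~\ref{q-tcd-eq}; your cycle $(1)\Rightarrow(2)\Rightarrow(3)\Rightarrow(4)\Rightarrow(1)$ just makes explicit the easy implications (including the observation that $\pm e$ is a tripotent) that the paper leaves unsaid. The only point of care, which you handled appropriately, is excluding the degenerate value $q=1$ so that Corollary~\ref{q-tcd-eq} applies with $q>1$ odd.
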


\begin{proof}
It follows from Lemma~\ref{3q-domain} and Corollary~\ref{q-tcd-eq}.
\end{proof}

As a consequence this yierlds the characterization Theorem \ref{th-wtcd-eq}, our principal  result.

\medskip

It was also asked in \cite{DMa} whether if the ring $R$ strongly $n$-torsion clean, the equality $n=\mathrm{exp}(U(R))$ is true? We shall partially settle this query by using the following helpful assertion.

\begin{theorem}[{\cite[Theorem 6]{jaa-2021}}]\label{th-id-q}
Let $F$ be a finite field of characteristic $p$, $n, q \in \mathbb{N}$, and $q>1$ is odd. The following statements are equivalent.
\begin{enumerate}
\item[(1)] Every matrix $A \in \mathbb{M}_{n}(F)$ is a sum of a $q$-potent and an idempotent that commute.

\item[(2)] The number $N = LCM(|F| -1, |F|^{2}-1,..., |F|^{n}-1 )p^t$ is a divisor of $q-1$, where $p^t$ is the least non-negative integer power of $p$ that is greater or equal to $m$.
\end{enumerate}
\end{theorem}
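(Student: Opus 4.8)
\smallskip
\noindent\textbf{Proof plan for Theorem~\ref{th-id-q}.} Throughout, $p^{t}$ denotes the least power of $p$ with $p^{t}\ge n$; write $|F|=p^{f}$, let $\mu_{q-1}\subset\overline{F}^{\times}$ be the group of $(q-1)$-st roots of unity, and recall that the roots of $x^{q}-x$ in $\overline{F}$ are $0$ together with $\mu_{q-1}$, so that a square matrix over $F$ is $q$-potent exactly when its minimal polynomial divides $x^{q}-x$. The plan is to convert statement~(1) into a condition on rational canonical forms and thence into two divisibility conditions, one from nilpotent blocks and one from semisimple blocks, working with $x^{q}-x$ and transferring to the translate $(x-1)^{q}-(x-1)$ by $x\mapsto x+1$. \emph{Step 1 (reduction).} If $A=e+u$ with $e^{2}=e$, $u^{q}=u$ and $eu=ue$, then $e$ commutes with $A$ and splits $F^{n}$ into the $A$-invariant subspaces $V_{0}=\ker e$ and $V_{1}=eF^{n}$; on $V_{0}$ we have $A=u$ and on $V_{1}$ we have $A-I=u$, so $A|_{V_{0}}$ and $(A-I)|_{V_{1}}$ are $q$-potent, and conversely any $A$-invariant decomposition $F^{n}=V_{0}\oplus V_{1}$ with these two properties yields a commuting pair $e,u$. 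Since an $A$-invariant subspace decomposes along the primary components of $A$, and a companion matrix $C(h^{k})$ of a power of a monic irreducible $h$ is indecomposable ($F[x]/(h^{k})$ is uniserial), statement~(1) is equivalent to: for every monic irreducible $h\in F[x]$ and every $k\ge 1$ with $k\deg h\le n$, either $h^{k}\mid x^{q}-x$ or $h^{k}\mid(x-1)^{q}-(x-1)$.

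\emph{Step 2 (nilpotent blocks; the factor $p^{t}$).} Taking $h=x$ (worst case $k=n$): since $x^{k}\mid x^{q}-x$ only when $k\le 1$, for $n\ge 2$ the block $C(x^{n})$ forces $x^{n}\mid(x-1)^{q}-(x-1)$. Write $q-1=p^{a}m$ with $p\nmid m$; as $q$ is odd, $m$ is even. By Frobenius, $(x-1)^{q}-(x-1)=(x-1)\bigl((x^{p^{a}}-1)^{m}-1\bigr)$, and in the factorisation $(x^{p^{a}}-1)^{m}-1=\prod_{\zeta^{m}=1}\bigl(x^{p^{a}}-1-\zeta\bigr)$ precisely the factor with $\zeta=-1$ equals $x^{p^{a}}$ while the others are units at $0$; hence this polynomial vanishes to order exactly $p^{a}$ at $0$. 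Therefore $x^{n}\mid(x-1)^{q}-(x-1)$ iff $p^{a}\ge n$, i.e.\ iff $p^{t}\mid q-1$. Conversely $p^{t}\mid q-1$ settles every $h=x$ block and, since then $k\le n\le p^{a}$, makes the multiplicity constraint automatic in all the cases below.

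\emph{Step 3 (semisimple blocks; the $\mathrm{LCM}$).} Assume $p^{t}\mid q-1$. For a monic irreducible $h\ne x$ of degree $d\le n$, comparing multiplicities in $x^{q}-x=x(x^{m}-1)^{p^{a}}$ and in its translate reduces the disjunction ``$h^{k}\mid x^{q}-x$ or $h^{k}\mid(x-1)^{q}-(x-1)$'' to the condition that a root $\alpha$ of $h$ satisfies $\alpha\in\mu_{q-1}$ or $\alpha-1\in\mu_{q-1}$ (a conjugation-invariant condition, as $\mu_{q-1}$ and translation by $1$ are Galois-stable). Thus statement~(1) becomes: for every $d\le n$ and every element $\alpha$ of degree exactly $d$ over $F$ in the degree-$d$ extension $\mathbb{F}_{|F|^{d}}$, $\alpha\in\mu_{q-1}$ or $\alpha-1\in\mu_{q-1}$. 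If $(|F|^{d}-1)\mid q-1$ this is trivial. Conversely, suppose $(|F|^{d}-1)\nmid q-1$ and set $D=\gcd(q-1,\,|F|^{d}-1)$, the order of the subgroup $\mu_{q-1}\cap\mathbb{F}_{|F|^{d}}$. When $D\le\tfrac13(|F|^{d}-1)$, the number of elements of $\mathbb{F}_{|F|^{d}}$ of degree exactly $d$ over $F$ exceeds $2D$ once $|F|^{d}$ is past an explicit absolute bound, producing an $\alpha$ with neither $\alpha$ nor $\alpha-1$ in $\mu_{q-1}$. When $D=\tfrac12(|F|^{d}-1)$, the set $\mu_{q-1}\cap\mathbb{F}_{|F|^{d}}$ is exactly the group of non-zero squares of $\mathbb{F}_{|F|^{d}}$, and the existence of two consecutive non-square elements there (a weak consequence of Theorem~\ref{SDC1}), together with the same count of elements of smaller degree, again supplies such an $\alpha$; the finitely many small pairs $(|F|,d)$ not covered are checked by hand. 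Hence (1) forces $(|F|^{d}-1)\mid q-1$ for all $d\le n$, i.e.\ $\mathrm{LCM}(|F|-1,\dots,|F|^{n}-1)\mid q-1$.

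\emph{Conclusion and main obstacle.} Combining Steps~2 and~3, statement~(1) is equivalent to ``$p^{t}\mid q-1$ and $\mathrm{LCM}(|F|-1,\dots,|F|^{n}-1)\mid q-1$''; since the second modulus is prime to $p$, the two divisibilities together amount to $N\mid q-1$, which is statement~(2). The step I expect to be hardest is the forward direction of Step~3 in the borderline case $D=\tfrac12(|F|^{d}-1)$: counting alone does not suffice there, and one must appeal to the existence of consecutive non-squares of a prescribed degree over $F$ — precisely the phenomenon quantified by Theorem~\ref{SDC1} — and then clear the short list of residual small fields by direct computation.
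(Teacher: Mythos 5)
There is nothing in this paper to compare your argument against: Theorem~\ref{th-id-q} is imported verbatim from \cite[Theorem 6]{jaa-2021} and is not proved here (the displayed ``$m$'' in statement (2) is a typo for $n$, which you have silently and correctly repaired). Judged on its own, your plan is sound and its skeleton is the natural one: the reduction in Step 1 to the condition that every prime power $h^{k}$ with $k\deg h\le n$ divides $x^{q}-x$ or $(x-1)^{q}-(x-1)$ is a correct use of the elementary-divisor (Krull--Schmidt) decomposition together with the observation that a commuting pair $(e,u)$ is the same thing as an $A$-invariant splitting $V_{0}\oplus V_{1}$ with $A|_{V_{0}}$ and $(A-I)|_{V_{1}}$ both $q$-potent; Step 2 correctly extracts the factor $p^{t}$ from the exact order of vanishing of $(x-1)^{q}-(x-1)$ at $0$; and Step 3 correctly reduces the semisimple blocks to the statement that every element $\alpha$ of degree exactly $d\le n$ over $F$ satisfies $\alpha\in\mu_{q-1}$ or $\alpha-1\in\mu_{q-1}$, which is then defeated by counting unless $(|F|^{d}-1)\mid q-1$.

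Two points need tightening before this is a proof rather than a plan. First, the parenthetical ``as $q$ is odd, $m$ is even'' is false in characteristic $2$; the conclusion you need (that $-1$ is an $m$-th root of unity, so that exactly one factor of $(x^{p^{a}}-1)^{m}-1$ is $x^{p^{a}}$) still holds there because $-1=1$, but say so, since the theorem does not assume $p$ odd. Second, the forward direction of Step 3 is left resting on an unspecified ``explicit absolute bound'' plus a hand check of ``finitely many small pairs'', and in the borderline case $D=\tfrac12(|F|^{d}-1)$ you must produce a pair $\alpha-1,\alpha$ of non-squares with $\alpha$ of degree \emph{exactly} $d$, not merely two consecutive non-squares. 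This does work: the count of such pairs is $\tfrac14\bigl(|F|^{d}-2+\la(-1)\bigr)$ (a direct character-sum computation in the spirit of Section~\ref{SecSDC1}, weaker than Theorem~\ref{SDC1}), while the elements of degree less than $d$ number at most about $2|F|^{d/2}$, and the residual small pairs (e.g.\ $\mathbb{F}_{9}$ over $\mathbb{F}_{3}$, where $\xi$ and $\xi-1$ are both non-squares; $\mathbb{F}_{25}$ and $\mathbb{F}_{49}$, where $F^{\times}$ consists of squares of the quadratic extension; $\mathbb{F}_{27}$ over $\mathbb{F}_{3}$) all verify, with the only pair admitting no consecutive non-squares, $\mathbb{F}_{3}$ with $d=1$, excluded because $q$ odd forces $2\mid D$ there. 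So the gap is one of execution, not of concept: enumerate the exceptional $(|F|,d)$ explicitly and check them, and the argument closes.
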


Actually, in the proof of the implication $(2)\Rightarrow (1)$ was obtained a stronger result like this: every matrix $A \in \mathbb{M}_{n}(F)$ is a sum of an invertible $q$-potent and an idempotent that commute. In particular, one can be seen that the number $LCM(|F| -1, |F|^{2}-1,..., |F|^{m}-1 )p^t$ is equal exactly to $\mathrm{exp}(U(\mathbb{M}_{n}(F) ))$.

\medskip

We, thereby, can extract the following important consequence.

\begin{cor} Let $F$ be a finite field of odd characteristic and $n\in \mathbb{N}$. Then the ring $\mathbb{M}_{n}(F)$ is strongly $\mathrm{exp}(U(\mathbb{M}_{n}(F)))$-torsion clean.
\end{cor}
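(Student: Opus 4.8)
The plan is to match the exponent $e:=\mathrm{exp}(U(\mathbb{M}_n(F)))$ with the integer $N$ occurring in condition (2) of Theorem~\ref{th-id-q}, and then to invoke that theorem in both directions together with the strengthening recorded in the remark following it. By that remark, $e=\mathrm{LCM}(|F|-1,\dots,|F|^n-1)\,p^t$ (with $p=\Char(F)$ and $p^t$ the least power of $p$ which is $\ge n$) and this is exactly the quantity $N$ of Theorem~\ref{th-id-q}(2). Since $F$ has odd characteristic, $|F|$ is odd, hence $2\mid|F|-1\mid e$; in particular $e$ is even and $e\ge 2$. Write $n_0$ for the least natural number such that every element of $\mathbb{M}_n(F)$ admits a strongly $n_0$-torsion clean decomposition (this exists because $\mathbb{M}_n(F)$ is finite); the goal is to show $n_0=e$.

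For the inequality $n_0\le e$, put $q:=e+1$, which is odd and $>1$. Then $N=e$ divides $q-1=e$, so by Theorem~\ref{th-id-q}, in the sharpened form of the remark, every $A\in\mathbb{M}_n(F)$ can be written $A=u+f$ with $f$ idempotent, $u$ an \emph{invertible} $q$-potent, and $uf=fu$. An invertible $q$-potent satisfies $u^{q-1}=u^{e}=1$, so this is a strongly $e$-torsion clean decomposition of $A$; hence $n_0\le e$.

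For $n_0\ge e$ it suffices to prove that whenever every element of $\mathbb{M}_n(F)$ has a strongly $n'$-torsion clean decomposition, then $e\mid n'$ (and then to apply this with $n'=n_0$). Decomposing the zero matrix gives $0=f_0+u_0$ with $f_0^2=f_0$, $u_0\in U(\mathbb{M}_n(F))$, $u_0^{n'}=1$ and $f_0u_0=u_0f_0$; then $u_0=-f_0$ is invertible, which forces $f_0=I_n$ and $u_0=-I_n$, and $(-I_n)^{n'}=I_n$ forces $(-1)^{n'}=1$ in $F$. Because $\Char(F)\neq 2$, this means $n'$ is even, so $q':=n'+1$ is odd and $>1$. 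Now for each $A\in\mathbb{M}_n(F)$ a strongly $n'$-torsion clean decomposition $A=f_A+u_A$ has $u_A^{q'}=u_A^{\,n'+1}=u_A$, so $u_A$ is a $q'$-potent commuting with the idempotent $f_A$; thus every matrix in $\mathbb{M}_n(F)$ is a sum of a $q'$-potent and a commuting idempotent, and Theorem~\ref{th-id-q} gives $N=e\mid q'-1=n'$. Combining the two inequalities yields $n_0=e$, which is the assertion.

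I expect the only delicate point to be the parity step: Theorem~\ref{th-id-q} is available only for odd $q$, so before applying its forward implication one must exclude odd $n'$, and testing the decomposition on the zero matrix is precisely what does this — here the hypothesis that $\Char(F)\neq 2$ is indispensable, since it is what makes $-I_n$ an element of order $2$. Everything else is a bookkeeping translation between ``strongly $n$-torsion clean decomposition'' and ``invertible $q$-potent plus commuting idempotent'', for which the invertibility in the remark's strengthened statement is exactly what is needed to conclude $u^{e}=1$.
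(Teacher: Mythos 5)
Your proposal is correct and follows essentially the same route as the paper, which extracts the corollary directly from Theorem~\ref{th-id-q} together with the remark that the quantity $N$ there equals $\mathrm{exp}(U(\mathbb{M}_n(F)))$ and that the $q$-potent can be taken invertible: the strengthened implication $(2)\Rightarrow(1)$ with $q=N+1$ gives the strongly $N$-torsion clean decompositions, and $(1)\Rightarrow(2)$ gives minimality. Your explicit parity check via the zero matrix (to ensure the theorem, stated only for odd $q$, applies in the minimality direction) is exactly the detail the paper leaves implicit under the odd-characteristic hypothesis.
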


\medskip

\noindent{\bf Funding:} The work of the second-named author P. V. Danchev is supported in part by the Bulgarian National Science Fund under Grant KP-06 No. 32/1 of December 07, 2019 as well as by the Junta de Andaluc\'ia, FQM 264.

Besides, A. N. Abyzov and D. N. Tapkin was performed under the development program of Volga Region Mathematical Center (agreement no. 075-02-2021-1393).

\vskip4.0pc

\end{document}